\documentclass[a4paper,11 pt]{amsart}

\usepackage{eulervm}
\usepackage{hyperref}

\usepackage[utf8]{inputenc}
\usepackage[T1]{fontenc}
\usepackage[english]{babel}
\usepackage{mathtools}
\usepackage{braket, fontenc}
\usepackage{amsmath, amssymb, stmaryrd, mathabx}
\usepackage{enumerate}
\usepackage{enumitem}
\usepackage{tikz-cd}
\usepackage{tikz,float, hyperref, circuitikz, comment}

\newcommand{\rr}{{\Phi}}
\newcommand{\scp}[2]{\langle #1, #2\rangle}

\newcommand{\Ps}{\mathcal{P}}
\newcommand{\Ms}{\mathcal{M}}
\newcommand{\res}{\mathrm{Res}}

\newcommand{\atil}{\widetilde{a}}
\newcommand{\btil}{\widetilde{b}}
\newcommand{\ftil}{\widetilde{f}}
\newcommand{\x}{z}
\newcommand{\xtil}{\widetilde{z}}
\newcommand{\R}{\mathbb{R}}
\newcommand{\Z}{\mathbb{Z}}
\newcommand{\C}{\mathbb{C}}

\newcommand{\HH}{\mathcal{H}}
\newcommand{\bb}{\textbf{B}}
\newcommand{\Bases}{\mathcal{B}}
\newcommand{\iber}{\mathrm{iBer}}
\newcommand{\G}{\mathcal{G}}
\newcommand{\F}{\mathcal{F}}
\newcommand{\xb}{\overline{x}}
\newcommand{\nn}{N}

 \theoremstyle{plain}
 \newtheorem{theorem}{Theorem}[section]
\newtheorem{proposition}[theorem]{Proposition}
 \newtheorem{lemma}[theorem]{Lemma}
 
\newtheorem{corollary}[theorem]{Corollary}
\theoremstyle{definition}
 \newtheorem{definition}[theorem]{Definition}

 \newtheorem{remark}[theorem]{Remark}

\title[Intersection theory on singular moduli spaces of vector bundles]{Intersection theory on singular moduli spaces of vector bundles: a parabolic approach}
\author{Camilla Felisetti}
\address{ Departimento di Scienze Fisiche, Informatiche e Matematiche, UniversitÃ  di Modena e Reggio Emilia.}
\email{Camilla.Felisetti@unimore.it}

\author{Olga Trapeznikova}
\address{ Faculty of Mathematics, ISTA}
\email{Olga.Trapeznikova@ist.ac.at}

\begin{document}	
\begin{abstract}
We present explicit formulas for the intersection pairing in the intersection cohomology of the moduli space $\Ms_0(r)$ of rank-$r$, degree-$0$ semistable bundles on a Riemann surface. The key idea is to realize this intersection cohomology as a canonical subspace of the cohomology of a  smooth moduli space of parabolic bundles, where the pairing can be computed via the Hecke correspondence and the Jeffrey–Kirwan iterated residue formulas. This approach provides a simpler alternative to the blow-up construction of Jeffrey--Kirwan--Kiem--Woolf, yielding formulas for the intersection pairing on $\Ms_0(r)$, for arbitrary $r$, with  a clear geometric interpretation.  
\end{abstract}

\maketitle

 \section{Introduction}
Let $C$ be a compact Riemann surface of genus $g \ge 2$. For a pair of integers $r$ and $d$, let $\mathcal{M}_{d}(r)$ denote the moduli space of semistable holomorphic vector bundles of rank $r$, degree $d$, and fixed determinant over $C$. When $r$ and $d$ are coprime, the stability and semistability conditions coincide, and  $\mathcal{M}_{d}(r)$ is a smooth projective variety. The intersection theory of these spaces has been studied extensively over the past several decades \cite{AtiyahBott, JeffreyK, Wittenrevisited, NS, HN74}. 

In contrast, when $r$ and $d$ are not coprime -- notably for $d=0$ -- the moduli space $\mathcal{M}_{0}(r)$ develops singularities coming from strictly semistable bundles. In this case, ordinary cohomology no longer satisfies Poincar\'e duality, and should be replaced by intersection cohomology. The intersection cohomology groups $IH^*(\mathcal{M}_0(r),\mathbb{Q})$, introduced by Goresky and MacPherson \cite{GM80, GM83}, restore this duality and provide the natural framework for extending the foundational results of Atiyah and Bott \cite{AtiyahBott}.

The study of the intersection theory of these singular moduli spaces has a long history. Classical approaches typically handle the singularities either through explicit desingularizations or via equivariant methods. An early result in this direction was obtained by Kiem \cite{Kiem}, who computed the intersection Poincar\'e--Verdier pairing on the moduli space $\Ms_0(2)$ of semistable bundles of rank $r=2$. This was later generalized by Jeffrey, Kirwan, Kiem, and Woolf \cite{JKKW06}, who gave a formula for the intersection pairing on $\Ms_0(r)$ for arbitrary rank $r\geq 2$.

The approach of \cite{JKKW06} is based on the infinite-dimensional viewpoint of Atiyah and Bott, in which the moduli space $\Ms_0(r)$ is realized as a geometric invariant theory quotient  $M\!\sslash\!G$. Using techniques developed in \cite{JKKW03}, the authors construct a partial desingularization $\widetilde{M}\!\sslash\!G$ via a sequence of blow-ups, producing a partial resolution of $\Ms_0(r)$. A key feature of their construction is that the intersection cohomology  $IH^*(\Ms_0(r),\mathbb{Q})$ embeds naturally into the ordinary cohomology  $H^*(\widetilde{M}\!\sslash\!G,\mathbb{Q})$, and that the intersection pairing on $IH^*(\Ms_0(r),\mathbb{Q})$ can be recovered from the pairing on the resolved space. Although this method is explicit, it becomes increasingly difficult to work with as the rank grows: the stratification of the resolution becomes more intricate, and explicit computations in $H^*(\widetilde{M}\!\sslash\!G,\mathbb{Q})$ for $r>2$ difficult to carry out in practice.

The goal of this paper is to give a complete and explicit description of the intersection pairing on $IH^*(\mathcal{M}_0(r),\mathbb{Q})$ for arbitrary rank $r$. Our approach avoids classical desingularization by exploiting the geometry of moduli spaces of stable parabolic bundles, which provide a natural finite-dimensional smooth model associated with $\Ms_0(r)$. More precisely, following \cite{ParCamOTASz}, we consider the smooth moduli space $\Ps_0(r)$ of stable parabolic bundles and the proper morphism $\pi\colon \Ps_0(r)\to \Ms_0(r)$, that forgets the parabolic structure. Unlike the blow-up constructions in \cite{JKKW06}, this map relates $\Ms_0(r)$  to a smooth space without requiring an explicit resolution of its singularities.

The key observation, based on the theory of perverse sheaves and the Decomposition Theorem of Beilinson, Bernstein, Deligne, and Gabber \cite{DT}, is that the intersection cohomology $IH^*(\Ms_0(r),\mathbb{Q})$ can be identified with a canonical subspace of the ordinary cohomology $H^*(\Ps_0(r),\mathbb{Q})$. As a result, the Poincar\'e--Verdier pairing on the singular moduli space $\Ms_0(r)$ may be computed by restricting the usual intersection pairing on the smooth space $\Ps_0(r)$. 

This description becomes explicit  through a tautological Hecke correspondence, established in \cite{OTASz}. Using this geometric construction,  the space $\mathcal{P}_0(r)$ can be realized as a projective bundle over the smooth moduli space $\mathcal{M}_1(r)$. The projective bundle formula then reduces the computation of intersection numbers on $\mathcal{P}_0(r)$ to intersection numbers on $\mathcal{M}_1(r)$, which can be evaluated using the iterated residue formulas of Jeffrey and Kirwan \cite{JeffreyK}.

\subsection{Remarks} Our finite-dimensional approach provide a useful framework for a deeper understanding of the singular moduli spaces $\Ms_0(r)$. In particular, using the identification of $IH^*(\Ms_0(r), \mathbb{Q})$ with a subspace of the cohomology ring $H^*(\Ps_0(r), \mathbb{Q})$ (see Proposition \ref{prop:lowestperv}), it should be possible to describe explicitly the intersection-cohomology fundamental class of $\Ms_0(r)$. This would be a first step toward constructing a full basis of $IH^*(\Ms_0(r), \mathbb{Q})$ in terms of universal classes on the moduli space of stable parabolic bundles $\Ps_0(r)$ -- so far, such a basis has been explicitly described only in rank $2$ \cite{Kiem}. 

It would also be interesting to compare these results with modern enumerative frameworks, such as Donaldson--Thomas invariants, to see whether the canonical subspace of $H^*(\Ps_0(r), \mathbb{Q})$ described in Proposition \ref{prop:lowestperv}  captures the same geometric information about $\Ms_0(r)$ as the virtual fundamental class, thus relating 
topological intersection theory and derived enumerative geometry.

\subsection{Contents of the paper\label{S:plan}} 
The paper is organized as follows. In \S\ref{S:IC} we recall the basic properties of intersection cohomology and the intersection pairing, that will be used throughout the paper. In particular, we formulate the Decomposition Theorem in a form suitable for our arguments.

In \S\ref{S:3}, we introduce the smooth moduli space of parabolic bundles $\Ps_0(r)$, which maps to the singular moduli space $\Ms_0(r)$ via a proper morphism $\pi\colon \Ps_0(r)\to \Ms_0(r).$ 
Using the results of \cite{ParCamOTASz}, we then identify the intersection cohomology $IH^*(\Ms_0(r),\mathbb{Q})$ with a canonical subspace of the cohomology ring $H^*(\Ps_0(r),\mathbb{Q})$ (see Proposition \ref{prop:lowestperv}).  Proposition \ref{prop:PDtoPV} further shows  that the computation of the Poincar\'e--Verdier pairing on $IH^*(\Ms_0(r),\mathbb{Q})$ can be reduced to the computation of the intersection pairing on $H^*(\Ps_0(r),\mathbb{Q})$. The next two sections are devoted to carrying out this latter computation.

In Propositions \ref{prop:H:10} and \ref{prop:bundle}, we show that the moduli space $\Ps_0(r)$ can be realized, via the Hecke correspondence, as a projective bundle over the smooth moduli space $\Ms_1(r)$. As a result, intersection numbers in $H^*(\Ps_0(r),\mathbb{Q})$ can be computed from the intersection pairing on $H^*(\Ms_1(r),\mathbb{Q})$ using the projective bundle formula. The latter pairing is evaluated using the formulas of Jeffrey and Kirwan \cite{JeffreyK}, which we recall in \S\ref{S:4}.

In \S\ref{S:5}, we introduce universal cohomology classes in $H^*(\Ps_0(r),\mathbb{Q})$ (see Theorem \ref{Thm:genP0}) and describe the cohomology ring $H^*(\Ps_0(r),\mathbb{Q})$ in terms of these classes. In Theorem \ref{Thm:intP0} we compute the integrals of these classes over $\Ps_0(r)$ using the results of \S\ref{S:4}, therefore obtaining explicit intersection numbers in $H^*(\Ps_0(r),\mathbb{Q})$. 

Finally, in \S\ref{S:6} we combine these results to obtain our main conclusion, stated in Theorem \ref{Thm:mainresult}: explicit formulas for the Poincar\'e--Verdier pairing on  $IH^*(\Ms_0(r),\mathbb{Q})$.  We end the paper by comparing our results with Kiem's computation of the intersection  pairing on $IH^*(\Ms_0(2),\mathbb{Q})$  \cite{Kiem} (cf. also \cite{JKKW06}) in \S\ref{S:Kiem}.

\subsection*{Acknowledgments} The authors gratefully acknowledge the advise and encouragement of Andr\'as Szenes. 
C.F. was supported by IN$\delta$AM GNSAGA and by the project: “Discrete Methods in Combinatorial Geometry and Geometric Topology” of the University of Modena and Reggio Emilia. O.T. was supported by the Austrian Science Fund (FWF) 10.55776/ESP2557325.

\section{Intersection cohomology and the Decomposition Theorem}\label{S:IC} 

\subsection{Intersection cohomology}
  We start with recalling some basic definitions and properties of \textit{intersection cohomology}, a topological invariant introduced by Goresky and MacPherson \cite{GM80, GM83} to extend Poincar\'e duality and intersection theory for singular varieties. For more details and proofs we refer to \cite{dCM09, GM83}. 
  
\noindent  
\textbf{Convention:} Throughout this paper, all (intersection) cohomology groups are taken with rational coefficients.

 Let $Y$ be a complex (possibly singular) quasi-projective algebraic variety. 
 The rational intersection cohomology groups of $Y$ form a graded vector space $IH^*(Y)=\oplus_{i\in\Z}IH^i(Y)$ which preserves many of the fundamental features of the cohomology of smooth varieties. In particular, intersection cohomology satisfies the Hard Lefschetz theorem, carries the Hodge package, and admits a non-degenerate Poincar\'e--Verdier intersection pairing.
    
 Just as orinary cohomology can be described 
as the hypercohomology of the shifted constant sheaf,  
$$H^{i}(Y)=\mathbb{H}^{i-\dim Y}(Y,\mathbb{Q}_Y[\dim (Y)]),$$ 
 intersection cohomology is defined as the hypercohomology of a cohomologically constructible complex, called the \textit{intersection cohomology sheaf} $IC_Y$:
 $$IH^i(Y):= \mathbb{H}^{i-\dim (Y)}(Y,IC_Y).$$
The complex $IC_Y$ restricts on the smooth locus $Y_{\mathrm{reg}}$ to the shifted constant sheaf: $$IC_Y\big|_{Y_{\mathrm{reg}}} = \mathbb{Q}_{Y_{\mathrm{reg}}}[\dim (Y)],$$ and thus can be viewed as an extension of the constant sheaf from the smooth part of $Y$ to the whole variety.
 
 More generally, there is a canonical construction, known as the  \textit{intermediate extension}, which associates to any local system $\mathcal{L}$ on a locally closed nonsingular subvariety $Z\subset Y$ a constructible complex $IC(\overline{Z},\mathcal{L})$ supported on the closure $\overline{Z}$. This complex extends $\mathcal{L}$ in a minimal way, in the sense that $$IC(\overline{Z},\mathcal{L})|_Z=\mathcal{L},$$
 and it satisfies some support and co-support conditions (see \cite{GM83}). 
 For such a pair $({Z},\mathcal{L})$  we define
 $$IH^i(\overline{Z},\mathcal{L}):= \mathbb{H}^{i-\dim (Y)}(\overline{Z},IC(\overline{Z},\mathcal{L})).$$

 One of the most powerful tools in the theory of intersection cohomology is the Decomposition Theorem \cite{DT}, which we formulate in the following subsection.

\subsection{The Decomposition Theorem}
In order to state the Decomposition Theorem in a form suitable for our purposes, we first recall some basic facts about the perverse filtration.
Throughout this section, we work in the following setup.
\begin{itemize}[leftmargin=12pt, itemsep=0pt, topsep=2pt]
\item Let $f \colon X \to Y$ be a proper surjective morphism of projective varieties, where $X$ is nonsingular of dimension $\nn$. 
\end{itemize}

To a morphism $f$ one can associate a canonical increasing filtration on $H^*(X)$, called the \emph{perverse filtration}. This filtration originates in the theory of perverse sheaves and admits a definition via the perverse truncation functors in the derived category $D^b_c(Y)$ of bounded constructible complexes, see e.g. \cite{BBD82, dCM05,dCM09,W17}.

While this construction justifies the canonical nature of the perverse filtration, it is often difficult to handle directly.  Under our simplifying assumption that $Y$ is projective, however, the perverse filtration admits a much more concrete description in terms of an ample class on the base, see \cite[Proposition 5.2.4]{dCM05}.

\begin{definition}\label{def:pervfiltr}
Fix an ample class $\beta\in H^2(Y)$ on $Y$, and consider the nilpotent operator 
$$ A\colon H^*(X) \rightarrow H^{*+2}(X)$$
defined by cupping with $f^* \beta \in H^2(X)$.
The \textit{perverse filtration} $P_\bullet$ on $H^d(X)$ is defined by\footnote{We adopt shifts so that $P_\bullet$ ranges from 0 to the (real) relative dimension of $f$.}
\[
P_k\mathrm{H}^d(X) \overset{\mathrm{def}}= \sum_{i\geq 1} \left(
\mathrm{ker}(A^{\nn+k+i-d}) \cap \mathrm{im}(A^{i-1}) \right) \cap \mathrm{H}^d(X).
\]
We denote the associated graded pieces 
by $$ H^d_k(X):=P_kH^d(X)/P_{k-1}H^d(X),$$
 and call them the \textit{perverse cohomology groups}.
\end{definition}

The following fundamental result of Beilinson, Bernstein, Deligne, and Gabber \cite{DT} states that the cohomology of $X$ admits a decomposition into the perverse cohomology groups. 

\begin{theorem}[Decomposition Theorem]\label{Thm:DT}
Let $f:X\rightarrow Y$ be a proper algebraic map from a nonsingular projective variety $X$ of dimension $\nn$ to a projective variety $Y$. Denote by $r(f) := \dim(X\times_YX)-\dim(X)$ the defect of semismallness of $f$. 
\begin{enumerate}[label=(\roman*)]
\item There is an isomorphism\footnote{In \cite{dCM07}, De Cataldo and Migliorini show  that the splittings in
\eqref{eq:dt} and \eqref{eq:dtglobal} are canonical once a class 
$\eta \in H^2(X)$ that is relatively ample with respect to $f$ is fixed.} in the derived category $\mathbf{D}^b_c(Y)$: 
\begin{equation}\label{eq:dt}
Rf_* \mathbb{Q}_X [\nn-r(f)] \simeq \bigoplus_{i=0}^{2r(f)} {}^p \mathcal{H}^i [-i],
\end{equation}
where the sheaves ${}^p \mathcal{H}^i := {^p\HH^i}(Rf_*\mathbb{Q}_X[\nn-r(f)])$ 
are called the \textit{perverse cohomology sheaves}. 
\item Taking hypercohomology on both sides of \eqref{eq:dt}  
yields isomorphisms
\begin{equation}\label{eq:dtglobal}
H^d(X)\cong \bigoplus_{i=0}^{2r(f)} H^{d-\nn + r(f)-i}(Y, {}^p \mathcal{H}^i), \text{ for all } 0\leq d\leq 2\nn.
\end{equation}
\item Under the isomorphism \eqref{eq:dtglobal}, the perverse filtration can be described as
\[
P_kH^d(X)=\mathrm{Im}\left\{H^{d-\nn + r(f)}(Y, \bigoplus_{i=0}^k{}^p \mathcal{H}^i[-i])\to H^d(X)\right\},
\]
and hence
\begin{equation*}
H_k^{d} (X) \simeq  H^{d- \nn + r(f)-k}(Y, {}^p \mathcal{H}^k).
\end{equation*}
\item There exists a stratification $Y=\sqcup_\alpha Y_\alpha$ of $Y$ adapted to $f$ such that, for each $i=0,\ldots, 2r(f)$, the perverse cohomology sheaves decompose as a direct sum of the intersection cohomology sheaves 
\begin{equation}\label{eq:dtlocsyst}
^p \mathcal{H}^i\cong \bigoplus_\alpha IC(\overline{Y}_\alpha,\mathcal{L}^i_\alpha).
\end{equation}
\end{enumerate}
Here $\mathcal{L}^i_\alpha$ are (shifted) local systems on $Y_\alpha$, defined by 
\[\mathcal{L}^i_\alpha=\mathcal{H}^{-\dim (Y_{\alpha})}(Y_\alpha,i_{\alpha}^* {}^p \mathcal{H}^i)[\dim (Y_\alpha)-i], \;\; i_\alpha:Y_\alpha\hookrightarrow Y.\] 
\end{theorem}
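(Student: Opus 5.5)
This statement is the Decomposition Theorem of Beilinson--Bernstein--Deligne--Gabber. We cannot reprove it from scratch, so the plan is to assemble it from the deep inputs (the BBD decomposition and de Cataldo--Migliorini's relative Hard Lefschetz), checking that the normalizations and shifts used here match.

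\textbf{Part (i).} Since $X$ is smooth, $\mathbb{Q}_X[\nn]$ is a pure perverse sheaf of weight $\nn$. Because $f$ is proper, $Rf_*\mathbb{Q}_X[\nn]$ is pure by Deligne's Weil II (via spreading out to finite fields), and pure complexes split as the sum of their shifted perverse cohomology sheaves \cite{DT}. Next we locate the perverse amplitude. By the de Cataldo--Migliorini estimate, ${}^p\mathcal{H}^j(Rf_*\mathbb{Q}_X[\nn])$ is nonzero only for $|j|\le r(f)$. This follows from the fiber-dimension bound $\dim f^{-1}(Y_\alpha)+\dim(f^{-1}(y))\le \nn+r(f)$ for $y\in Y_\alpha$, combined with the support conditions and Verdier self-duality. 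Shifting by $-r(f)$ re-indexes the range to $0\le i\le 2r(f)$, which gives \eqref{eq:dt}. Relative Hard Lefschetz for an $f$-ample $\eta$ gives the symmetry ${}^p\mathcal{H}^{r(f)-j}\cong{}^p\mathcal{H}^{r(f)+j}$, but this is not needed for the statement.

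\textbf{Parts (ii) and (iii).} Part (ii) follows by applying $\mathbb{H}^{d-\nn+r(f)}(Y,-)$ to \eqref{eq:dt}, using $H^d(X)=\mathbb{H}^{d-\nn+r(f)}(Y,Rf_*\mathbb{Q}_X[\nn-r(f)])$. For (iii), recall that the perverse filtration is intrinsically the image of ${}^p\tau_{\le k}$ in hypercohomology. In the split decomposition, ${}^p\tau_{\le k}$ is the partial sum $\bigoplus_{i\le k}{}^p\mathcal{H}^i[-i]$, which gives the stated description of $P_k$ and of the graded pieces. The remaining, genuinely nontrivial, point is that this agrees with Definition~\ref{def:pervfiltr}, which is phrased via the pullback of an ample class $\beta$ on $Y$. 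For this I would invoke de Cataldo--Migliorini's flag-filtration result \cite[Prop.~5.2.4]{dCM05}. Its proof uses that cupping with $f^*\beta$ acts on each $H^*(Y,{}^p\mathcal{H}^i)$ by hard Lefschetz for intersection cohomology on $Y$. The kernels and images of powers of $A$ then cut out exactly the perverse pieces, after matching the index shift $\nn+k+i-d$ to our normalization.

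\textbf{Part (iv).} Take a Whitney stratification adapted to $f$, so that all $R^jf_*$ restricted to each stratum are local systems. Each ${}^p\mathcal{H}^i$ is a pure perverse sheaf, hence semisimple, by Gabber's purity plus the BBD semisimplicity theorem. By the classification of simple perverse sheaves, each simple summand is $IC(\overline{Y}_\alpha,\mathcal{L})$ for an irreducible local system $\mathcal{L}$ on a stratum. Grouping these summands by stratum and identifying $\mathcal{L}^i_\alpha$ as the lowest-degree cohomology sheaf of $i_\alpha^*{}^p\mathcal{H}^i$ on $Y_\alpha$, with the shift as written, gives \eqref{eq:dtlocsyst}.

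\textbf{Main obstacle.} The main obstacle is the compatibility step in (iii) between the sheaf-theoretic filtration and the linear-algebraic Definition~\ref{def:pervfiltr}. The deep inputs are black-box citations, but the index bookkeeping between the two normalizations is where errors are likely.
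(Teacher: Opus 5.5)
Your route is the paper's. The paper gives no argument for Theorem~\ref{Thm:DT} beyond citing \cite{DT} and \cite[Proposition 5.2.4]{dCM05}. Your derivation of (i) and (ii) from purity, the amplitude bound $[-r(f),r(f)]$ for $Rf_*\mathbb{Q}_X[\nn]$ and Verdier self-duality is correct.

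There is one slip in (iv). $\mathcal{L}^i_\alpha$ comes from the \emph{top} cohomology sheaf $\mathcal{H}^{-\dim Y_\alpha}$ of $i_\alpha^*\,{}^p\mathcal{H}^i$, not the lowest one. A summand $IC(\overline{Y}_\beta,\mathcal{L}^i_\beta)$ with $Y_\alpha\subset\overline{Y}_\beta\setminus Y_\beta$ restricts to $Y_\alpha$ only in degrees $<-\dim Y_\alpha$ (strict support condition). That is exactly why degree $-\dim Y_\alpha$ isolates the $\alpha$-summand.

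The step that does not go through is the one you postponed in (iii): the index shift cannot be matched with Definition~\ref{def:pervfiltr} as printed. Here is the bookkeeping.

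\begin{itemize}
\item Cup product with $f^*\beta$ is induced by $\beta\colon\mathbb{Q}_Y\to\mathbb{Q}_Y[2]$, so it is natural in the complex and preserves any splitting \eqref{eq:dt}.
\item By hard Lefschetz on $Y$ for the semisimple perverse sheaf ${}^p\mathcal{H}^i$, the $i$-th summand contributes Jordan strings $w,Aw,\dots,A^\ell w$ of $A$ in $H^*(X)$. These strings are symmetric about degree $c_i=\nn-r(f)+i$.
\item All kernels and images of powers of $A$ are spanned by vectors of such an adapted Jordan basis. Take $v=A^tw$, of degree $d=c_i-\ell+2t$. It lies in $\mathrm{im}(A^{s-1})$ iff $s\le t+1$, and in $\ker(A^m)$ iff $m\ge \ell+1-t$.
\item With the optimal choice $s=t+1$, one gets $v\in\sum_{s\ge1}\ker(A^{c_0+k+s-d})\cap\mathrm{im}(A^{s-1})$ iff $c_i\le c_0+k$.
\end{itemize}

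For $c_0=\nn$, as in Definition~\ref{def:pervfiltr}, this selects the summands with $i\le k+r(f)$, not $i\le k$. For instance, take $X=\mathbb{P}^2\times\mathbb{P}^1\to Y=\mathbb{P}^1$, so $\nn=3$, $r(f)=2$ and $A^2=0$. The $i=1$ term of the printed formula already gives $P_0H^2(X)=\ker(A^2)\cap H^2(X)=H^2(X)$. Part (iii), however, gives only the line $f^*H^2(\mathbb{P}^1)$.

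So (iii), read against the printed definition, is off by $r(f)$. It becomes true only after replacing the exponent $\nn+k+i-d$ in Definition~\ref{def:pervfiltr} by $\nn-r(f)+k+i-d$, where $i$ is the summation index there. The printed formula in effect indexes the filtration by $[-r(f),r(f)]$, i.e.\ by truncations of $Rf_*\mathbb{Q}_X[\nn]$, not by $[0,2r(f)]$ as its footnote says.

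The corrected exponent is what the paper actually uses in Theorem~\ref{Thm:mainresult}(i). There it appears as $n-d+k$ with $n=\dim\Ps_0(r)-r(\pi)$. You should carry out this computation and state (iii) for the corrected definition, rather than asserting that the shift can be matched.
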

\noindent\textbf{Notation:} We say that a stratum $Y_\alpha$ \textit{appears as a support} in the decomposition \eqref{eq:dtlocsyst} of ${}^p \mathcal{H}^i$ if the corresponding summand $IC(\overline{Y}_\alpha,\mathcal{L}^i_\alpha)$ is nonzero.

Note that by construction of the local systems $\mathcal{L}^i_\alpha$, for any point $y\in Y_\alpha$ the stalk $(\mathcal{L}^i_\alpha)_y$  is a subspace of $H^{\dim(X)-\dim (Y_\alpha)+i-r(f)}(f^{-1}(y))$. As a consequence, we obtain the following statement.
\begin{corollary}\label{cor:stratasupp}
 If $r(f)$ coincides with the relative dimension of $f$, then a stratum $Y_\alpha$ can appear as a support in the decomposition \eqref{eq:dtlocsyst} of ${}^p \mathcal{H}^i$ only if 
$$\mathrm{codim} (Y_\alpha\subset Y)+i\leq 2\dim (f^{-1}(y)), \text{\; for \;}  y\in Y_\alpha.$$
\end{corollary}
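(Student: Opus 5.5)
The argument is a degree count on the stalks of the local systems $\mathcal{L}^i_\alpha$, using the vanishing of cohomology of a fibre above twice its complex dimension.

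\emph{Nonvanishing forces a nonzero stalk.} Suppose $Y_\alpha$ appears as a support of ${}^p\mathcal{H}^i$, i.e.\ $IC(\overline{Y}_\alpha,\mathcal{L}^i_\alpha)\neq 0$. The intermediate extension of the zero local system is zero, so $\mathcal{L}^i_\alpha\neq 0$. Hence for any $y\in Y_\alpha$ the stalk $(\mathcal{L}^i_\alpha)_y$ is nonzero.

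\emph{Embedding into fibre cohomology.} By the observation just before the statement, this stalk injects into
$$H^{\dim(X)-\dim(Y_\alpha)+i-r(f)}(f^{-1}(y)).$$
So this cohomology group of the fibre must be nonzero.

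\emph{Rewriting the degree.} Now use the hypothesis $r(f)=\dim X-\dim Y$, the relative dimension. Then
$$\dim X-\dim Y_\alpha-r(f)=\dim Y-\dim Y_\alpha=\mathrm{codim}(Y_\alpha\subset Y).$$
The degree in question is therefore $\mathrm{codim}(Y_\alpha\subset Y)+i$.

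\emph{Conclusion.} The fibre $f^{-1}(y)$ is a compact complex projective variety, possibly singular and reducible. Its rational cohomology vanishes in degrees above twice its complex dimension, since it has real dimension $2\dim f^{-1}(y)$ and admits a finite CW structure of that dimension. Since the group above is nonzero, we get $\mathrm{codim}(Y_\alpha\subset Y)+i\le 2\dim(f^{-1}(y))$, as claimed.

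\emph{Main point to check.} The only step needing care is the stalk inclusion, which the paper states just before the corollary. It comes from proper base change, $(Rf_*\mathbb{Q}_X)_y=R\Gamma(f^{-1}(y),\mathbb{Q})$, together with the fact that each summand ${}^p\mathcal{H}^i[-i]$ is a direct summand of $Rf_*\mathbb{Q}_X[\nn-r(f)]$. Taking $\mathcal{H}^{-\dim Y_\alpha}$ of stalks at $y$ and accounting for the shifts then gives a subspace of $H^{\nn-r(f)-\dim Y_\alpha+i}(f^{-1}(y))$. One should also note that the relevant fibre dimension is that of $f^{-1}(y)$ for $y$ in the stratum, which is constant along $Y_\alpha$ because the stratification is adapted to $f$.
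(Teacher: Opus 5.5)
Your proposal is correct and follows essentially the paper's own argument. The paper likewise takes the stalk of $\mathcal{L}^i_\alpha$ at $y\in Y_\alpha$ as a subspace of $H^{\dim X-\dim Y_\alpha+i-r(f)}(f^{-1}(y))$, uses $r(f)=\dim X-\dim Y$ to rewrite this degree as $\mathrm{codim}(Y_\alpha\subset Y)+i$, and concludes from the vanishing of fibre cohomology above twice the fibre dimension. Your justification of the stalk inclusion via proper base change and the direct-summand decomposition fills in the step the paper only asserts as holding ``by construction.''
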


\subsection{Perverse cohomology groups}
We finish this section by recalling several key properties of the perverse sheaves and the
perverse cohomology groups that will be used later. 

Let $\eta\in H^2(X)$ be a class that is relatively ample with respect to $f$, and let $k\in\mathbb{N}$. In the derived category $D^b_c(X)$, the class $\eta^k$ corresponds to a morphism $\mathbb{Q}_X\to \mathbb{Q}_X[2k]$. Applying the derived pushforward functor $Rf_*$  and passing to perverse cohomology, we obtain induced morphisms 
$$ {}^p\HH^{i}\to {}^p\HH^{i+2k}, \text{\; for \;} i\in\mathbb{Z}.$$
The relative Hard Lefschetz theorem states that these morphisms are isomorphisms in appropriate degrees. 
\begin{theorem}[Relative Hard Lefschetz]\label{Thm:HardLef} For every integer $k\geq 0$:
\begin{enumerate}[label=(\roman*)]
\item The map induced by cupping with $\eta^k$ in perverse cohomology is an isomorphism:
\begin{equation}\label{eq:RHLsheaves}
^p\HH^{r(f)-k}\xrightarrow{\simeq} {^p\HH^{r(f)+k}}.
\end{equation}
\item  The isomorphism \eqref{eq:RHLsheaves} is compatible  with the decomposition \eqref{eq:dtlocsyst}. In particular,  the set of strata $Y_\alpha$ appearing as supports in the decomposition of ${}^p \mathcal{H}^{r(f)-k}$ coincides with the set of those appearing in the decomposition of ${}^p \mathcal{H}^{r(f)+k}$. 
\item   Taking hypercohomology on both sides of \eqref{eq:RHLsheaves} yields isomorphisms 
$$\cup\eta^k\colon H_{r(f)-k}^{d}(X)\xrightarrow{\simeq} H_{r(f)+k}^{d+2k}(X), \text{ for all } d\geq 0.$$
    \end{enumerate} 
\end{theorem}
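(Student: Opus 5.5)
The relative Hard Lefschetz statement (Theorem~\ref{Thm:HardLef}) is a foundational result of \cite{DT}, and I would not reprove it from scratch. Instead I would deduce the three items from the sheaf-level theorem of Beilinson--Bernstein--Deligne--Gabber together with the Decomposition Theorem~\ref{Thm:DT} already recorded.

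\textbf{Item (i).} I would cite the relative Hard Lefschetz theorem for the pure complex $Rf_*\mathbb{Q}_X[\nn]$, with $X$ smooth and projective. It states that cupping with $\eta^k$ induces isomorphisms ${}^p\HH^{-k}(Rf_*\mathbb{Q}_X[\nn])\to {}^p\HH^{k}(Rf_*\mathbb{Q}_X[\nn])$. The shift by $-r(f)$ in our normalization turns ${}^p\HH^{j}(Rf_*\mathbb{Q}_X[\nn])$ into ${}^p\HH^{j+r(f)}$ in the indexing of \eqref{eq:dt}. Taking $j=\mp k$ gives exactly \eqref{eq:RHLsheaves}. This uses only the fact that the nonzero perverse cohomology sheaves lie in the symmetric range $0,\dots,2r(f)$, which was established in \eqref{eq:dt}.

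\textbf{Item (ii).} The perverse sheaves ${}^p\HH^i$ are semisimple, and their decomposition \eqref{eq:dtlocsyst} is the canonical decomposition by strict support. Indeed, the summand supported on $\overline{Y}_\alpha$ is intrinsic: it is the unique maximal subobject with that strict support. Any morphism of perverse sheaves between two semisimple objects respects the decomposition by strict support, since there are no nonzero morphisms between simple intermediate extensions with different supports. Hence the isomorphism $\cup\eta^k$ maps $IC(\overline{Y}_\alpha,\mathcal{L}^{r(f)-k}_\alpha)$ isomorphically onto $IC(\overline{Y}_\alpha,\mathcal{L}^{r(f)+k}_\alpha)$. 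In particular one summand is nonzero if and only if the other is, which gives the claim about supports.

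\textbf{Item (iii).} I would apply hypercohomology $H^{m}(Y,-)$ to \eqref{eq:RHLsheaves}, shifted by $[-(r(f)-k)]$ on the source and by $[-(r(f)+k)]$ on the target. The degree bookkeeping then matches the identification $H^d_i(X)\simeq H^{d-\nn+r(f)-i}(Y,{}^p\HH^i)$ from Theorem~\ref{Thm:DT}(iii). Concretely, a class in $H^d_{r(f)-k}$ corresponds to $H^{d-\nn+k}(Y,{}^p\HH^{r(f)-k})$, and $H^{d+2k}_{r(f)+k}$ corresponds to $H^{d+2k-\nn-k}(Y,{}^p\HH^{r(f)+k})$. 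These are the same cohomological degree, so the sheaf isomorphism induces an isomorphism of the groups.

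It remains to check that this map agrees with cup product by $\eta^k$ on $H^*(X)$ passed to the graded pieces. This holds because $\eta^k$ acts on $Rf_*\mathbb{Q}_X$ as a morphism of degree $2k$. Such a morphism maps ${}^p\tau_{\le i}$ into ${}^p\tau_{\le i+2k}$, and so it shifts the perverse filtration $P_\bullet$ by $2k$ and induces the perverse-cohomology map on graded pieces.

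\textbf{Main obstacle.} The only delicate point is this compatibility of the filtration with the cup product, together with keeping the shift conventions consistent (the footnote normalization with $P_\bullet$ ranging from $0$ to $2r(f)$). I would handle both by working with the perverse truncation triangles and by using that $\eta^k$ is a morphism of degree $2k$, rather than through the ample-class description of Definition~\ref{def:pervfiltr}.
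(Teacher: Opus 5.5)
Your proposal is correct and takes essentially the same route as the paper, which states this theorem without proof and simply recalls it from \cite{DT} in the normalization of \cite{dCM05, dCM09}; your re-indexing by $r(f)$ in (i), the strict-support argument in (ii), and the degree bookkeeping together with the truncation-triangle check that $\cup\eta^k$ induces ${}^p\HH(\eta^k)$ on graded pieces in (iii) are the standard deductions behind it. A small remark: (ii) does not actually need semisimplicity, because an intermediate extension has no nonzero subobjects or quotients supported on the boundary of its support, so morphisms between summands with different strict supports vanish regardless.
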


We next recall how Poincar\'e duality on $H^*(X)$ interacts with the Decomposition Theorem \ref{Thm:DT}. 
\begin{theorem}[Poincar\'e-Verdier duality] \label{thm:pvdual} 
Under the isomorphism $\eqref{eq:dtglobal}$, the Poincar\'e duality on $H^*(X)$ induces, for all $k,d\in \mathbb{Z}$,  isomorphisms
$$H_{r(f)-k}^{\nn-d}(X)\xrightarrow{\simeq} H_{r(f)+k}^{\nn+d}(X).$$
\end{theorem}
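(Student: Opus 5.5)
The plan is to derive the statement from the self-duality of the direct image under Verdier duality, combined with the Decomposition Theorem \ref{Thm:DT} and the relative Hard Lefschetz Theorem \ref{Thm:HardLef}.

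\textbf{Step 1 (sheaf-level duality).} Since $X$ is nonsingular of dimension $\nn$, we have $\mathbb{D}_X(\mathbb{Q}_X[\nn])\simeq \mathbb{Q}_X[\nn]$. Since $f$ is proper, $Rf_*=Rf_!$ commutes with Verdier duality. Hence $K:=Rf_*\mathbb{Q}_X[\nn]$ satisfies $\mathbb{D}_Y K\simeq K$. Verdier duality is t-exact for the perverse t-structure, up to reversing degrees, so it exchanges perverse cohomology sheaves:
$$\mathbb{D}_Y\,{}^p\HH^j(K)\simeq {}^p\HH^{-j}(K).$$
With the paper's normalization ${}^p\HH^i={}^p\HH^i(K[-r(f)])={}^p\HH^{i-r(f)}(K)$, this becomes $\mathbb{D}_Y\,{}^p\HH^{r(f)-k}\simeq {}^p\HH^{r(f)+k}$.

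\textbf{Step 2 (hypercohomology).} For a perverse sheaf $P$ on the projective variety $Y$, global Verdier duality gives a perfect pairing $H^{-m}(Y,P)\otimes H^{m}(Y,\mathbb{D}P)\to\mathbb{Q}$. Apply this with $P={}^p\HH^{r(f)-k}$ and use Theorem \ref{Thm:DT}(iii), namely $H^d_k(X)\simeq H^{d-\nn+r(f)-k}(Y,{}^p\HH^k)$. Degree $\nn-d$ in $H_{r(f)-k}$ corresponds to $H^{-d}(Y,{}^p\HH^{r(f)-k})$, and degree $\nn+d$ in $H_{r(f)+k}$ corresponds to $H^{d}(Y,{}^p\HH^{r(f)+k})$. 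These are dual, which gives the stated identification, understood as a duality or isomorphism to the dual.

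\textbf{Step 3 (compatibility with the Poincar\'e pairing on $X$).} We must check that the abstract duality is the one induced by the cup-product pairing. First, the Poincar\'e pairing on $H^*(X)$ is $H^*(Y,-)$ applied to the duality $K\simeq\mathbb{D}K$. Second, this self-duality preserves the perverse filtration, in the sense that $P_{j}^\perp=P_{2r(f)-1-j}$ under the pairing. Both facts follow from the t-exactness in Step 1. The pairing therefore descends to a perfect pairing $H^{\nn-d}_{r(f)-k}\times H^{\nn+d}_{r(f)+k}\to\mathbb{Q}$. Composing with the inverse of the relative Hard Lefschetz isomorphism $\cup\eta^k$ of Theorem \ref{Thm:HardLef}(iii), if a genuine isomorphism between the two groups is wanted, gives the map in the statement.

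\textbf{Main obstacle.} I expect Step 3 to be the hardest: showing that the perverse filtration is isotropic in the correct complementary degrees for the Poincar\'e pairing. I would first try the concrete description in Definition \ref{def:pervfiltr}. The operator $A$ is self-adjoint for the cup pairing, so $\ker(A^a)^\perp=\mathrm{im}(A^a)$. This lets one compute $P_k^\perp$ directly and match it with $P_{2r(f)-1-k}$. Alternatively, one can cite de Cataldo--Migliorini \cite{dCM05} for this statement.
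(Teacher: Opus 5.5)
The paper gives no proof of this statement. It is recalled as a standard consequence of the Decomposition Theorem: the compatibility of the perverse filtration with Poincar\'e duality due to de Cataldo--Migliorini \cite{dCM05}, which the paper invokes again in Remark~\ref{rem:toJKKW1}. What the paper actually uses later, in the proof of Proposition~\ref{prop:PDtoPV}, is the perfect pairing $H^{\nn-d}_{r(f)-k}(X)\times H^{\nn+d}_{r(f)+k}(X)\to\mathbb{Q}$. Your Steps 1 and 3 are the standard argument for this and are correct in substance. Step 3 alone already suffices. The orthogonality $P_j^\perp=P_{2r(f)-1-j}$ for all $j$, together with perfectness of the cup-product pairing on $H^*(X)$, makes the induced pairing on $P_j/P_{j-1}\times P_{2r(f)-j}/P_{2r(f)-j-1}$ well defined and nondegenerate. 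Step 2 only identifies this pairing with Verdier duality on $Y$.

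The orthogonality does not follow from ``t-exactness'' alone, so spell it out. Use centered indices and set $K=Rf_*\mathbb{Q}_X[\nn]$.
\begin{enumerate}[label=(\arabic*)]
\item For $u\colon{}^p\tau_{\le j}K\to K$, naturality of the Verdier pairing gives $(\mathrm{Im}\,u_*)^\perp=\ker\bigl(H(Y,\mathbb{D}K)\to H(Y,\mathbb{D}\,{}^p\tau_{\le j}K)\bigr)$.
\item t-exactness gives $\mathbb{D}\,{}^p\tau_{\le j}K\simeq{}^p\tau_{\ge -j}\mathbb{D}K$, compatibly with the maps.
\item The truncation triangle identifies that kernel with $\mathrm{Im}\bigl(H(Y,{}^p\tau_{\le -j-1}\mathbb{D}K)\to H(Y,\mathbb{D}K)\bigr)$.
\item Conclude using that the self-duality $\mathbb{D}K\simeq K$ realizes the cup-product pairing.
\end{enumerate}

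Three corrections are needed.

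\textbf{Step 2 indices.} The bookkeeping is off. By Theorem~\ref{Thm:DT}(iii), the two groups are $H^{k-d}(Y,{}^p\HH^{r(f)-k})$ and $H^{d-k}(Y,{}^p\HH^{r(f)+k})$, not $H^{-d}$ and $H^{d}$. You can check this with $X\to\mathrm{pt}$, where $H^d=H^d_d$. The slip is harmless, because Verdier duality pairs $H^{-m}(Y,P)$ with $H^{m}(Y,\mathbb{D}P)$ for every $m$.

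\textbf{Last sentence of Step 3.} This sentence is wrong. Combining Poincar\'e duality with $\cup\eta^k$ or its inverse only reproduces $\cup\eta^k\colon H^{\nn-d}_{r(f)-k}\to H^{\nn-d+2k}_{r(f)+k}$. The reason is that an even number of Poincar\'e dualities and powers of $\eta$ preserve degree minus perversity. The groups $H^{\nn-d}_{r(f)-k}$ and $H^{\nn+d}_{r(f)+k}$ differ in that quantity by $2(d-k)$, so this only works when $d=k$. A genuine isomorphism onto $H^{\nn+d}_{r(f)+k}$ would also need Hard Lefschetz on $H^*(Y,{}^p\HH^{r(f)-k})$ for the ample class on $Y$, i.e.\ $A^{d-k}$. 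Such a map is no longer ``induced by Poincar\'e duality''. Read the statement as an isomorphism onto the dual and drop that sentence.

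\textbf{Fallback via Definition~\ref{def:pervfiltr}.} The formula printed there is in the centered normalization: for $A=0$ it gives $P_kH^d=H^d$ iff $k\ge d-\nn$. So the identity to verify there is $P_k^\perp=P_{-k-1}$. Moreover, $\ker(A^a)^\perp=\mathrm{im}(A^a)$ only rewrites $P_k^\perp$ as $\bigcap_{i\ge1}\bigl(\mathrm{im}\,A^{\nn+k+i-d}+\ker A^{i-1}\bigr)$. To turn this back into a sum of intersections, choose a homogeneous Jordan basis for $A$; in it, all these subspaces are coordinate subspaces. Consider a string $v,Av,\dots,A^lv$ with $\deg v=e$. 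Its basis vectors lie in $P_k$ exactly when $k\ge e+l-\nn$, and in $P_k^\perp$ exactly when $e+l-\nn\le -k-1$, which gives the identity.
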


\begin{remark}
Although the Poincar\'e--Verdier duality on perverse cohomology and the duality induced by Hard Lefschetz are, a priori, of different nature, a crucial feature of our particular situation is that they coincide for certain specific perversities. This identification will play a central role in our computation of the intersection pairing on  $IH^*(\Ms_0(r))$. 
\end{remark}

\section{A canonical embedding of $IH^*(\Ms_0(r))$ into $H^*(\Ps_0(r))$}\label{S:3}

Our main goal in this section is to identify the intersection cohomology $IH^*(\Ms_0(r))$ with a canonical subspace of the cohomology ring $H^*(\mathcal{P}_0(r))$ of a certain moduli space of parabolic bundles; see Proposition \ref{prop:lowestperv}. We then show in  Proposition \ref{prop:PDtoPV} that the intersection pairing on  $IH^*(\Ms_0(r))$ can be expressed in terms of the intersection pairing on  $H^*(\Ps_0(r))$.

\subsection{Moduli spaces of parabolic bundles}\label{S:2}
We now introduce the relevant moduli space of stable parabolic bundles and recall its basic properties. For definitions and further details, see \cite{OTASz, ParCamOTASz}. 

Set  $c_1=1/2r$, $c_2=(1-r)/2r$; we denote by $$\Ps_0(r) \overset{\mathrm{def}}= \Ps_0((c_1,c_1,...,c_1,c_2))$$ the moduli space of rank-$r$, degree-$0$ parabolic bundles that are stable with respect to parabolic weights $(c_1,c_1,...,c_1,c_2)$. This moduli space has dimension $(r^2-1)(g-1)+r-1$, and for a bundle $W\in \Ps_0(r)$, its parabolic structure is given by the choice of a single line $F_1$ in the fiber $W_p$.

	Similarly, we define  $$\Ps_{1}(r) \overset{\mathrm{def}}= \Ps_1((c_2+1, c_1,...,c_1))$$ to be the moduli space of rank-$r$, degree-$1$ parabolic bundles that are stable with respect to parabolic weights $(c_2+1,c_1,...,c_1)$. In this case, the parabolic structure of a bundle $W\in\Ps_1(r)$ is given by the choice of a codimension-one subspace $F_{r-1}$ in $W_p$, and this moduli space has the same dimension as $\Ps_0(r)$.	
 
 The moduli spaces $\Ps_0(r)$ and $\Ps_1(r)$ satisfies the following properties, which are easy to prove.

 \begin{proposition}\label{prop:bundle}
 \begin{enumerate}[label=(\roman*)]
\item  Let $(W,F_1\subset W_p)$ be a stable parabolic bundle which represents a point in $\Ps_0(r)$. Then the vector bundle $W$ is semistable, and the corresponding map 
	\begin{equation}\label{def:pi}
	\pi\colon \Ps_0(r)\to \Ms_0(r)
	\end{equation}
	 is a proper morphism of algebraic varieties. 
	
\item Let $(W,F_{r-1}\subset W_p)$ be a stable parabolic bundle which represents a point in $\Ps_1(r)$. Then the vector bundle $W$ is stable, and thus represents a point in $\Ms_1(r)$. Moreover, $\Ps_1(r)$ is isomorphic to the projectivization 
	\begin{equation*}
	 \mathbf{P}(\mathcal{U}\big|_{\Ms_1(r)\times\{p\}}) \overset{f}{\to} \Ms_1(r)
	\end{equation*}
	 of the restriction of a universal bundle $\mathcal{U}$ over $\Ms_1(r)\times C$,  where we use the notation
$\mathbf{P}(E)\simeq\mathbb{P}(E^*)$ for the space of one-dimensional \emph{quotients} of $E$. 
\end{enumerate}
	\end{proposition}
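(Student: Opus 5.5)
The plan is to unwind the parabolic stability condition for the specific small weights and compare it directly with ordinary (semi)stability.

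Recall that for a parabolic bundle $(W,F_\bullet)$ with weights, the parabolic degree of a subbundle $E\subset W$ is $\deg E$ plus the sum of the weights attached to $E$ through its induced flag in $E_p$. Stability means that $\mathrm{pardeg}(E)/\mathrm{rk}(E)<\mathrm{pardeg}(W)/\mathrm{rk}(W)$ for every proper nonzero subbundle $E$.

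For (i), take weights $(c_1,\dots,c_1,c_2)$ with $c_2$ attached to the line $F_1$. The total weight is $(r-1)c_1+c_2=0$, so $\mathrm{pardeg}\,W=0$. For $E$ of rank $k$, the weight contribution is $kc_1$ if $E_p\not\supset F_1$, and $(k-1)c_1+c_2=kc_1-\tfrac12$ if $E_p\supset F_1$. In either case it lies in $[-\tfrac12,\tfrac12)$, and it has absolute value less than $1$.

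Stability then gives $\deg E+(\text{weight})<0$. Since $\deg E\in\Z$ and the weight is strictly less than $1$, this forces $\deg E\le 0$, so $W$ is semistable. This defines a set-theoretic map $\pi$ sending $(W,F_1)$ to the S-equivalence class of $W$.

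To obtain a morphism, I would use the GIT constructions. Both moduli spaces arise as quotients of the same Quot-scheme parameter space, enlarged by the flag variety $\mathbb{P}(W_p)$ in the parabolic case. Forgetting the flag is equivariant and sends parabolically stable points to semistable points, so it descends to a morphism between the quotients. Alternatively, one can apply the universal property of $\Ms_0(r)$ as a coarse moduli space for families of semistable bundles to the family over $\Ps_0(r)$, which exists at least étale locally. Properness then follows because $\Ps_0(r)$ is projective and $\Ms_0(r)$ is separated.

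For (ii), take weights $(c_2+1,c_1,\dots,c_1)$, with $F_{r-1}$ of codimension one. The first weight $c_2+1$ is attached to $W_p/F_{r-1}$, or equivalently to the whole fiber in the flag convention. The total weight is $1+c_2+(r-1)c_1=1$, so $\mathrm{pardeg}\,W=2$ and the slope is $2/r$.

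For $E$ of rank $k<r$, the weight of $E$ equals $kc_1+\tfrac12\epsilon$, where $\epsilon=1$ if $E_p\not\subset F_{r-1}$ and $\epsilon=0$ otherwise. Hence $\mathrm{par}\mu(E)=\deg E/k+\tfrac1{2r}+\tfrac{\epsilon}{2k}$. The stability inequality $\mathrm{par}\mu(E)<2/r$ yields $\deg E/k<\tfrac{3}{2r}-\tfrac{\epsilon}{2k}<\tfrac{3}{2r}$.

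Now suppose $W$ were not stable, so that some $E$ has $\deg E/k\ge 1/r$. Since $\gcd(r,1)=1$, equality is impossible, so $\deg E/k>1/r$, which means $r\deg E\ge k+1$. This gives $\deg E/k\ge 1/r+1/(rk)$.

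I expect the main obstacle to be this comparison: checking that $1/r+1/(rk)\ge \tfrac{3}{2r}-\tfrac{\epsilon}{2k}$ for all $1\le k\le r-1$, since the weights are borderline. For $k=1$ it holds with room to spare. For $k\ge2$ it requires $\tfrac1k\ge\tfrac12-\tfrac{\epsilon r}{2k}$, that is $k\le 2$ when $\epsilon=0$. So the case $k\ge3$, $\epsilon=0$ needs a finer argument. I would handle it by re-examining the weight conventions and using integrality, namely $\deg E\ge 1$ and $r\deg E\ge k+1$, to sharpen the bound. If the conventions force it, I would instead compare $E$ through the quotient $W/E$, where the weight $c_2+1$ lands when $E_p\subset F_{r-1}$.

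Conversely, if $W$ is stable, every flag gives a stable parabolic bundle, again by a slope-gap computation. So $\Ps_1(r)$ is the space of pairs consisting of a stable $W$ and a hyperplane in $W_p$, equivalently a one-dimensional quotient of $W_p$. Since $\gcd(r,1)=1$, a universal bundle $\mathcal{U}$ exists on $\Ms_1(r)\times C$. Its relative projectivization $\mathbf{P}(\mathcal{U}|_{\Ms_1(r)\times\{p\}})$ carries the tautological family of parabolic bundles. The induced morphism to $\Ps_1(r)$ is bijective between smooth varieties, hence an isomorphism by Zariski's Main Theorem. A different choice of $\mathcal{U}$ differs by a line bundle twist and does not change $\mathbf{P}$.
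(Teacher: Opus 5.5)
Your part (i) is fine. The weight contribution of a proper subbundle has absolute value $<\tfrac12$, so stability forces $\deg E\le 0$ whichever sign convention is used. (The paper gives no argument beyond calling both parts easy.)

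The gap is in (ii), and it is not the technicality you describe. You use $\mathrm{pardeg}(E)=\deg E+w(E)$, which is what gives $\mathrm{pardeg}\,W=2$. With that convention the statement is false, so no finer argument for $k\ge 3$, $\epsilon=0$ can exist. Take $r=4$, a nonsplit extension $0\to E\to W\to L\to 0$ with $E$ stable of rank $3$ and degree $1$, $\deg L=0$, and $F_3=E_p$. Every saturated $E'\subset W$ of rank $k'$ then satisfies your inequality $\deg E'<\tfrac{3k'}{8}-\tfrac{\epsilon}{2}$:
\begin{itemize}
\item line subbundles not contained in $E$ have degree $\le -1$ by non-splitness;
\item subsheaves of $E$ have degree $\le 0$ and $\epsilon=0$, except $E$ itself, which has $\epsilon=0$ and $1<\tfrac98$;
\item all other rank-$2$ and rank-$3$ subbundles have degree $\le 0$.
\end{itemize}
Yet $W$ is unstable. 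Your converse also fails: for $k=1$, $\epsilon=1$ you need $\deg E<\tfrac{3}{2r}-\tfrac12<0$ once $r\ge 3$, and stability of $W$ does not give this.

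The missing point is the normalization. The weights sum to the degree ($0$ for $\Ps_0(r)$, $1$ for $\Ps_1(r)$), and the right convention is $\mathrm{pardeg}(E)=\deg E-w(E)$, so that $\mathrm{pardeg}\,W=0$ in both cases. This is also the only sign for which the Hecke modification of Proposition~\ref{prop:H:10} preserves the parabolic degrees of corresponding subbundles, and hence stability: there the degree drops by $1$ and the weight $c_2+1$ on $W_p/F_{r-1}$ becomes $c_2$ on the new line $F_1$.

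With this convention $w(E)=\tfrac{k}{2r}+\tfrac{\epsilon}{2}\in(0,1)$, so parabolic stability reads $\deg E<\tfrac{k}{2r}+\tfrac{\epsilon}{2}$, i.e. $\deg E\le 0$. Stability of a rank-$r$, degree-$1$ bundle reads $\deg E<\tfrac{k}{r}$, i.e. again $\deg E\le 0$. The two conditions coincide for every hyperplane $F_{r-1}$. This gives at once that $W$ is stable and that every hyperplane occurs. The rest of your argument then goes through: the universal bundle $\mathcal{U}$ exists, the tautological family lives on $\mathbf{P}(\mathcal{U}|_{\Ms_1(r)\times\{p\}})$, and you conclude via the bijective morphism of smooth varieties.
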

    
\begin{remark}\label{rem:P1iso}
The projective bundle  $\mathbf{P}(\mathcal{U}\big|_{\Ms_1(r)\times\{p\}})$ is independent of the choice of the universal bundle $\mathcal{U}$ on $\Ms_1(r)\times C$. 
\end{remark}	

 In \cite[\S7]{OTASz} we introduced the tautological Hecke correspondence between moduli spaces of parabolic bundles with different degrees and parabolic  weights. In particular, the Hecke corrspondence provides an isomorphism between the two moduli spaces defined above.
   \begin{proposition}\cite[Proposition 7.1]{OTASz}\label{prop:H:10}
 Let $\Ps_1(r)$ and $\Ps_0(r) $ be the moduli spaces of degree-1 and degree-0 parabolic bundles defined above.  Then the tautological Hecke correspondence yields an isomorphism $\mathcal{H}\colon \Ps_1(r)\to \Ps_0(r)$. 
\end{proposition}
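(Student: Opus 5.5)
The plan is to construct the Hecke transform explicitly, check that it sends $\Ps_1(r)$-stable objects to $\Ps_0(r)$-stable objects, and then show it is invertible. Given a point $(W,F_{r-1}\subset W_p)$ of $\Ps_1(r)$, define $W'$ as the elementary modification
$$0\to W'\to W\to W_p/F_{r-1}\to 0,$$
where the last map is evaluation at $p$ followed by the quotient $W_p\to W_p/F_{r-1}\cong\C$ (a skyscraper at $p$). Then $W'$ has rank $r$ and degree $0$. The image of $W'_p\to W_p$ is exactly $F_{r-1}$, so its kernel is a line $F_1\subset W'_p$; concretely, $F_1=W(-p)_p\subset W'_p$. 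Set $\mathcal{H}(W,F_{r-1})=(W',F_1)$. Conversely, from $(W',F_1)$ one recovers $W$ as the elementary modification of $W'(p)$ along the hyperplane $F_1(p)$ in the opposite direction. The two constructions are mutually inverse on the level of parabolic bundles, since each is the standard inverse pair of elementary transformations. Because everything works in families via the universal (or local universal) bundles, both constructions define algebraic morphisms between the moduli functors, and hence between the coarse spaces.

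The main step is to check that stability is preserved with the specified weights. For a subbundle $S'\subset W'$ of rank $s$, the parabolic degree with weights $(c_1,\dots,c_1,c_2)$ picks up $c_2$ or $c_1$ depending on whether $S'_p$ contains $F_1$. Let $S\subset W$ be the saturation of the image of $S'$. Then $\deg S=\deg S'+1$ if $S'_p\ni F_1$, and otherwise $\deg S=\deg S'$. In either case one compares with the relative position of $S_p$ and $F_{r-1}$ for the weights $(c_2+1,c_1,\dots,c_1)$.

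I expect this to be a direct computation in which the shift by $1$ in degree exactly matches the shift $c_2\mapsto c_2+1$ in the weight. The normalization $c_1 r+(c_2-c_1)=1/2$ versus the degree-1 side should make the parabolic slope inequalities correspond term by term. The underlying reason is that the Hecke transform is compatible with parabolic degree: the parabolic bundles $(W,F_{r-1})$ and $(W',F_1)$ are the same parabolic sheaf up to shifting the weights by an integer. I would therefore phrase the argument as saying that the filtered sheaves $W(-p)\subset W'\subset W$ determine the same parabolic object, with weights rotated cyclically. Under this correspondence, parabolic subsheaves correspond bijectively and parabolic degrees differ by a constant independent of the subsheaf (after normalizing the total parabolic degree to $0$). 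Stability is then equivalent on the two sides, and this comparison is the step I expect to be the main obstacle: it needs careful bookkeeping of weights.

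Finally, a bijective morphism between smooth varieties is not automatically an isomorphism, but here we have an explicit inverse morphism, so $\mathcal{H}$ is an isomorphism. If a universal family fails to exist, the argument should be run on the level of stacks or local families, where both constructions are functorial and descend.
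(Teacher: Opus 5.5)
Your construction is the right one: $W'=\ker(W\to W_p/F_{r-1})$ is exactly the modification \eqref{SESU} used in the paper. The paper gives no proof of its own and imports the statement from \cite[Proposition 7.1]{OTASz}. Your filtered-sheaf heuristic is also the correct conceptual reason. Two small slips:
$F_1$ is $\ker(W'_p\to W_p)$, i.e.\ the \emph{image} of $W(-p)_p\to W'_p$; the space $W(-p)_p$ itself is $r$-dimensional.
The inverse recovers $W$ as the preimage in $W'(p)$ of the \emph{line} $F_1(p)\subset W'(p)_p$, not of a hyperplane.

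The real gap is that the one nontrivial assertion, preservation of stability, is left as an expectation. Your claim that parabolic degrees of corresponding subsheaves ``differ by a constant'' cannot be taken on faith. It depends on which flag piece carries which weight and on the sign convention for the parabolic degree, which is precisely the bookkeeping you skipped.

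It closes in a few lines. For $S'\subset W'$ saturated of rank $0<s<r$, let $S$ be its saturation in $W$. Then $S\cap W'=S'$, so $S'\leftrightarrow S$ is a rank-preserving bijection of saturated subsheaves, and exactly one of two cases occurs.
(A) $S_p\not\subset F_{r-1}$. Then $S\not\subset W'$, so $S/S'\simeq W/W'$ and $\deg S=\deg S'+1$. The one-dimensional kernel of $S'_p\to S_p$ injects into $\ker(W'_p\to W_p)=F_1$, hence $F_1\subset S'_p$.
(B) $S_p\subset F_{r-1}$. Then $S=S'$ and the composite $S'_p\to W'_p\to W_p$ is injective, hence $F_1\not\subset S'_p$.

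By Proposition \ref{prop:bundle}(ii), $(W,F_{r-1})$ is stable iff $W$ is stable, i.e.\ iff $\deg S\le 0$ for all such $S$. By the dichotomy this is equivalent to $\deg S'\le -1$ whenever $F_1\subset S'_p$, and $\deg S'\le 0$ otherwise. This is exactly $(c_1,\dots,c_1,c_2)$-stability of $(W',F_1)$. Use $\mathrm{pardeg}=\deg-\sum(\text{weights})$, the convention under which Proposition \ref{prop:bundle}(ii) holds. Then $\mathrm{pardeg}(S')<0$ reads $\deg S'<(s-r)/(2r)\in(-\tfrac12,0)$ if $F_1\subset S'_p$, and $\deg S'<s/(2r)\in(0,\tfrac12)$ otherwise. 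In fact $\mathrm{pardeg}(S')=\mathrm{pardeg}(S)$ on the nose: both equal $\deg S-(r+s)/(2r)$ in case (A) and $\deg S-s/(2r)$ in case (B). Adding $\det W'=\det W\otimes\mathcal{O}(-p)$ for the fixed determinants and your inverse construction in families completes the proof.
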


\subsection{Application of the Decomposition theorem to  $\pi\colon \Ps_0(r)\to \Ms_0(r)$}\label{S1.2} 

As we will explain, the identification of the intersection cohomology $IH^*(\Ms_0(r))$ with a subspace of the cohomology ring $H^*(\Ps_0(r))$ is obtained by applying the Decomposition Theorem \ref{Thm:DT} to the morphism $\pi \colon \mathcal{P}_0(r) \to \mathcal{M}_0(r),$ introduced in Proposition \ref{prop:bundle}. 
To prepare for this application, we describe a stratification of $\Ms_0(r)$, adapted to $\pi$. For further details, we refer the reader to \cite{ParCamOTASz}.

We write $\rho = [r_1,r_2,...,r_k] \vdash r$ for a partition of the integer r, which means  $$ r_1+\ldots+ r_k=r \text{\;\; and \;\;} r_1\ge r_2\ge\ldots\ge r_k > 0.$$ To each $V\in\Ms_0(r)$, we associate a partition $\rho(V)\vdash r$ as follows.

Recall that, via the Jordan--H\"older filtration, points of the moduli space $\Ms_0(r)$ correspond to isomorphism classes of \textit{polystable} bundles. For $V \in \Ms_0(r)$, let $\mathrm{Gr}(V)\simeq V_1\oplus V_2\oplus...\oplus V_k$ be the associated graded decomposition, where each  $V_i$ is stable of degree $0$. We denote by $\rho(V)$ the partition of $r$ given by the ranks of the summands $V_i$.

This construction induces a decomposition of $\Ms_0(r)$ into a disjoint union of locally closed subsets:
	\begin{equation*}
		\label{eq:partialstrat} \Ms_0(r) = \bigsqcup\limits_{\rho\vdash r} M_\rho,
	\end{equation*}
	where, for $ \rho =[r_1,\ldots,r_k] \vdash r$,
\[ M_\rho= \{ V_1\oplus\ldots\oplus V_k \in \mathcal{M}_0(r)\mid V_j- \, \text{stable, }\,   \mathrm{rk}(V_j)=r_j,\, \mathrm{deg}(V_j)=0, \,
    j=1,\ldots,k 
    \}/\cong.
\]

The above decomposition  is not a stratification in the usual sense, since the automorphism group of a polystable bundle $V_1\oplus \cdots \oplus V_k$ depends on whether some of the stable summands are isomorphic. This motivates the following definition.
\begin{definition}
A polystable bundle $V_1\oplus V_2\oplus...\oplus V_k$ is called \textit{abelian} if no two of its direct summands are isomorphic. 
\end{definition}
This notion induces a decomposition of each $M_\rho$, $\rho\vdash r$, as well as of $\Ms_0(r)$, into abelian and non-abelian parts:
$$ \Ms_0(r) = \Ms_0(r)^{\mathrm{ab}}\sqcup \Ms_0(r)^{\mathrm{nab}} \text{\;\; and \;\;} M_\rho = M_{\rho}^{\mathrm{ab}}\sqcup M_\rho^{\mathrm{nab}}.$$
The subsets $M_\rho^{\mathrm{ab}}$, for $\rho\vdash r$, are smooth quasi-projective varieties that form a partial stratification of the moduli space $\Ms_0(r)$ and satisfy $\overline{M_\rho^{\mathrm{ab}}} = \overline{M_\rho}$  (see \cite[Remark 3.3]{ParCamOTASz}). We note that when $\rho=[r]$ is the trivial partition, the corresponding stratum $M_{[r]}^{\mathrm{ab}}=M_{[r]}$  coincides with the smooth locus of $\Ms_0(r)$, consisting of stable bundles.

Using this partial stratification, we can summarize the results proved in \cite{ParCamOTASz}, that we will need in this paper.

\begin{theorem}\label{thm:dtpi}
    Let $\pi:\mathcal{P}_0(r)\rightarrow \mathcal{M}_0(r)$ 
    be the forgetful map described in Proposition \ref{prop:bundle}, and let $\rho=[r_1,r_2,...,r_k]\vdash r$ be a partition of $r$.
    \begin{enumerate}[label=(\roman*)]
    \item Let $V\in M^\mathrm{ab}_\rho$, then the fiber $\pi^{-1}(V)$  is a union of iterated projective bundle towers with   $$\dim(\pi^{-1}(V)) = \sum_{i<j}r_ir_j(g-1)+r-k.$$ In particular, when $V$ is stable (i.e. $V\in M_{[r]}$), the fiber is a single projective space: $\pi^{-1}(V)\simeq \mathbb{P}(V_p)\simeq \mathbb{P}^{r-1}.$
        \item The defect of semismallness (see Theorem \ref{Thm:DT}) of $\pi$ is $r(\pi)=r-1$. The non-abelian strata do not occur as supports of $\pi$, and the Decomposition Theorem for $\pi$ yields an isomorphism
        \begin{equation}\label{eq:dtsheaves}
        R\pi_*\mathbb{Q}_{\Ps_0(r)} [(r^2-1)(g-1)]= \bigoplus_{i=0}^{2r-2}\bigoplus\limits_{\rho\vdash r}
IC\left(\overline{M^\mathrm{ab}_\rho},\mathcal{L}^i_\rho\right)[-i], \end{equation}
        where $\rho$ runs over all partitions of $r$, and for a point $V\in M^\mathrm{ab}_\rho$, the stalk of the local system $\mathcal{L}^i_\rho$ at $V$  is a subspace of $H^{\dim(\Ms_0(r))-\dim(M_\rho)+i}(f^{-1}(V))$.
        \item The local system $\mathcal{L}_{[r]}^i$ on the smooth locus $M_{[r]}\subset \Ms_0(r)$ is trivial, with stalk isomorphic to $H^{i}(\mathbb{P}^{r-1})$.
    \end{enumerate}
\end{theorem}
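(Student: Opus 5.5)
The plan is to derive the three parts from a local analysis of the fibres of $\pi$, combined with Theorem \ref{Thm:DT} and Corollary \ref{cor:stratasupp}; this is essentially a summary of \cite{ParCamOTASz}.

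\emph{Part (i): fibre geometry.} Fix $V\in M^{\mathrm{ab}}_\rho$ with $\mathrm{Gr}(V)=V_1\oplus\cdots\oplus V_k$. A point of $\pi^{-1}(V)$ is a parabolic bundle $(W,F_1)$ with $W$ S-equivalent to $V$. Stability for the weights $(c_1,\dots,c_1,c_2)$ means the following: every proper degree-$0$ subbundle $W'\subset W$ must fail to contain the line $F_1$. Since $c_1$ is small, no other destabilizing subbundles can occur. We classify $W$ by ordering its Jordan--H\"older factors. One builds $W$ as an iterated extension whose last quotient is some $V_{\sigma(k)}$. We choose the line $F_1$ in the quotient direction and proceed inductively. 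At each step the data are an extension class in a space $\mathrm{Ext}^1(V_{\sigma(j)},\,\cdot\,)$, taken up to scaling, together with a line in a fibre, subject to an open (genericity) condition. Because the summands are pairwise non-isomorphic stable bundles of degree $0$, we have $\mathrm{Hom}(V_i,V_j)=0$ for $i\ne j$. Hence $\dim\mathrm{Ext}^1(V_i,V_j)=r_ir_j(g-1)$ by Riemann--Roch, and these dimensions are constant along the stratum. Consequently each ordering $\sigma$ gives an iterated projective bundle tower, and the fibre is the union of these towers over $\sigma$. Summing the extension dimensions (minus scalings) with the $r-1$ from the line, and adding back the $k-1$ scaling parameters absorbed by the torus $\mathrm{Aut}(\mathrm{Gr} V)=(\C^*)^k$, gives $\sum_{i<j}r_ir_j(g-1)+r-k$. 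For $V$ stable the only choice is $F_1\subset V_p$, and every line is allowed, so $\pi^{-1}(V)=\mathbb{P}(V_p)$.

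\emph{Part (ii): defect of semismallness.} We compute $r(\pi)=\max_\rho\bigl(2\dim\pi^{-1}(V)+\dim M_\rho-\dim\Ps_0(r)\bigr)$, using $\dim M_\rho=\sum_i\bigl(r_i^2(g-1)+1\bigr)-g$ and $\dim\Ps_0(r)=(r^2-1)(g-1)+r-1$. Since $r^2=\sum_i r_i^2+2\sum_{i<j}r_ir_j$, the expression simplifies to $r-1+(k-1)(1-g)-(r-k)+\dots$, which is maximal, equal to $r-1$, exactly at $\rho=[r]$. A short check shows that the other strata give strictly smaller values for $g\ge2$. The same count must also be done on the non-abelian loci, where $\mathrm{Aut}(\mathrm{Gr}V)$ contains $GL$ factors. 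To exclude these as supports, we apply Corollary \ref{cor:stratasupp}. Its hypothesis holds because $r(\pi)=r-1$ equals the relative dimension. We then show that on a non-abelian stratum the fibre dimension is strictly less than half of $\mathrm{codim}+i$ for every $i$ in the admissible range $0\le i\le 2r-2$. This is the main obstacle. We expect the repeated summands to increase the codimension by roughly $r_i^2(g-1)$ while increasing the fibre dimension by less. We also expect that non-abelian strata lie in the closure of abelian strata with the same partition, so they cannot create new supports. Formula \eqref{eq:dtsheaves} then follows from Theorem \ref{Thm:DT}(i),(iv), and the stalk description follows from the remark preceding Corollary \ref{cor:stratasupp}.

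\emph{Part (iii).} Over $M_{[r]}$ the map $\pi$ is the projective bundle $\mathbb{P}(\mathcal{U}|_p)$, which is well defined as a projective bundle even though $\mathcal{U}$ exists only locally. The local system $R^i\pi_*\mathbb{Q}$ is therefore generated by powers of the relative hyperplane class, which is globally defined. Hence this local system is trivial, with stalk $H^i(\mathbb{P}^{r-1})$. Since the restriction of $IC(\overline{M_{[r]}},\mathcal{L}^i_{[r]})$ to the open stratum is this local system, we get $\mathcal{L}^i_{[r]}\simeq H^i(\mathbb{P}^{r-1})\otimes\mathbb{Q}$.
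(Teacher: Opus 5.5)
The paper gives no proof of this theorem. It is quoted from \cite{ParCamOTASz}, and the only point added here is the remark that the non-fixed-determinant statements descend to $\pi\colon\Ps_0(r)\to\Ms_0(r)$ with all dimensions lowered by $g$. Your treatment of (iii) and of the abelian part of (ii) is fine. However, the claim that non-abelian strata are not supports is left unproved (``we expect''), and the route you sketch cannot prove it.

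The inequality you aim for is $2\dim\pi^{-1}(V)<\mathrm{codim}+i$ for all $0\le i\le 2r-2$. At $i=0$ it says $2\dim\pi^{-1}(V)<\mathrm{codim}$, and this is false. Take $r=3$, $g=2$, $V=L\oplus L\oplus M$. The iterated extensions of type $(L,M,L)$ with a line off $W_{2,p}$, taken modulo the unipotent automorphism $W\to W/W_2\cong L\cong W_1\subset W$, already form a piece of the fibre of dimension $3g-3$. The stratum has codimension $7g-8$, so both sides equal $6$.

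You would at least need Theorem \ref{Thm:HardLef}(ii) to reduce to $i=r-1$, as in Proposition \ref{prop:lowestperv}, but even that is not enough. The Riemann--Roch count $-\chi(G_j,G_i)=r_ir_j(g-1)$ does not see whether $G_i\cong G_j$. So over $V=V_1\oplus L\oplus L$, with $V_1$ stable of rank $r-2$, the fibre has exactly the abelian dimension $(2r-3)(g-1)+r-3$, realized by the ordering $(L,V_1,L)$. Meanwhile the codimension exceeds that of $M_{[r-2,1,1]}$ only by $g$. Hence $\dim S+2\dim\pi^{-1}(V)-\dim\Ps_0(r)=r-3-g$, which is $\ge0$ as soon as $r\ge g+3$. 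Then the necessary condition of Corollary \ref{cor:stratasupp} holds for ${}^p\mathcal{H}^{r-1}$, and no dimension count, even combined with relative Hard Lefschetz, excludes $S$.

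So your heuristic that repeated summands raise the codimension faster than the fibre dimension fails in the strength needed. Excluding these supports requires the finer input of \cite{ParCamOTASz}. Your fallback, that non-abelian strata lie in closures of abelian ones, is a non-sequitur: every stratum lies in the closure of $M_{[r]}$, and that does not prevent it from being a support.

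There are also smaller points, all repairable:
\begin{itemize}
\item The non-abelian part of $r(\pi)=r-1$ is only announced. The same Riemann--Roch bound gives fibre dimension at most $\sum_{i<j}r_ir_j(g-1)+r-k$ over any point with $k$ Jordan--H\"older factors. This gives contributions at most $r-k-\delta<r-1$ there, where $\delta>0$ is the drop in dimension from $M_\rho$.
\item The quantity in (ii) simplifies exactly to $r-k$, not to the expression you wrote.
\item In (i), ``adding back'' the $k-1$ torus parameters would give $r-1$ rather than $r-k$. The effective torus $(\C^*)^k/\C^*$ must be subtracted once.
\item Open genericity conditions give open subsets of projective bundles, not towers. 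To obtain compact towers, put the lift of the line into the extension datum. For example, for $k=2$, with $\bar F_1\subset V_{2,p}$ the image of the line, projectivize over $\mathbb{P}(V_{2,p})$ the parabolic extensions of $(V_2,\bar F_1)$ by $V_1$. Their points with vanishing ordinary extension class are the split bundles with a line avoiding both summands.
\end{itemize}
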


\begin{remark}
In \cite{ParCamOTASz}, the results are stated for moduli spaces of degree-$0$ vector bundles with \emph{non-fixed determinant}, namely for the morphism
$\widetilde{\pi}\colon \widetilde{\Ps}_0(r)\to \widetilde{\Ms}_0(r).$
The determinant maps fit into the commutative diagram
$$
\begin{tikzcd}
\Ps_0(r) \arrow[hook]{r} \arrow{d}[swap]{\pi} &
\widetilde{\Ps}_0(r) \arrow{r}{\det} \arrow{d}[swap]{\widetilde{\pi}} & \mathrm{Jac}^0(C) \\
\Ms_0(r) \arrow[hook]{r} & \widetilde{\Ms}_0(r) \arrow{ur}[swap]{\det} &
\end{tikzcd}
$$
where the horizontal arrows identify the fixed-determinant moduli spaces  with the fibers of the determinant maps.
Consequently, the description of the fibers and the decomposition theorem for $\widetilde{\pi}$ in \cite{ParCamOTASz} apply line-by-line to the fixed-determinant case, with the only change that the dimensions of $\widetilde{\Ps}_0(r)$, $\widetilde{\Ms}_0(r)$, and the strata $\widetilde{M}_\rho$, for all $\rho\vdash r$, are reduced by $g$.
\end{remark}

\subsection{A canonical embedding of $IH^*(\Ms_0(r))$ into $H^*(\Ps_0(r))$}\label{S:CanEmbedding}

With the results established in the previous subsections, we can now identify the intersection cohomology $IH^*(\Ms_0(r))$ with a canonical subspace of $H^*(\Ps_0(r))$. More precisely, by Theorem \ref{thm:dtpi}, the defect of semismallness of $\pi$ is equal to $r-1$; consequently, the perverse filtration on $H^d(\mathcal{P}_0(r))$ associated with $\pi$  is concentrated in degrees from $0$ to  $2r-2$. In particular, for any $d\geq 0$ the zeroth perverce piece satisfies $$P_0H^d(\Ps_0(r)) = H_0^d(\Ps_0(r)),$$ defining a canonical subspace of $H^d(\Ps_0(r))$. In the following proposition, we show that this subspace is naturally isomorphic to $IH^d(\Ms_0(r))$.
\begin{proposition}\label{prop:lowestperv}
For any $d\geq 0$, we have canonical isomorphisms
\begin{equation}\label{eq:prop:lowestperv}
 IH^d(\Ms_0(r),\mathbb{Q})\simeq 
 H^d_0(\Ps_0(r),\mathbb{Q}). 
 \end{equation} 
\end{proposition}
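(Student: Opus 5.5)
The plan is to apply part (iii) of the Decomposition Theorem \ref{Thm:DT} to $\pi$. With $\nn=\dim\Ps_0(r)=(r^2-1)(g-1)+r-1$ and $r(\pi)=r-1$ from Theorem \ref{thm:dtpi}(ii), we get $\nn-r(\pi)=(r^2-1)(g-1)=\dim\Ms_0(r)$. Part (iii) then gives
\[
H^d_0(\Ps_0(r))\simeq H^{d-\dim\Ms_0(r)}(\Ms_0(r),{}^p\mathcal{H}^0).
\]
So it suffices to show that ${}^p\mathcal{H}^0\simeq IC_{\Ms_0(r)}$. Once this is known, the right-hand side is $IH^d(\Ms_0(r))$ by the definition of intersection cohomology. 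Canonicity will come from the fact that $P_0H^d$ is intrinsically defined, either by Definition \ref{def:pervfiltr} or through perverse truncation $\tau^p_{\le 0}$. Indeed $P_0 H^d(\Ps_0(r))$ is the image of $H^{d-\dim\Ms_0(r)}(\Ms_0(r),{}^p\mathcal{H}^0)\to H^d(\Ps_0(r))$, and this map is injective by the splitting.

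To identify ${}^p\mathcal{H}^0$, use the decomposition \eqref{eq:dtsheaves}:
\[
{}^p\mathcal{H}^0=\bigoplus_{\rho\vdash r} IC(\overline{M^{\mathrm{ab}}_\rho},\mathcal{L}^0_\rho).
\]
\emph{Generic stratum.} For $\rho=[r]$, Theorem \ref{thm:dtpi}(iii) says $\mathcal{L}^0_{[r]}$ is the trivial local system with stalk $H^0(\mathbb{P}^{r-1})=\mathbb{Q}$. Since $\overline{M_{[r]}}=\Ms_0(r)$, this summand is exactly $IC_{\Ms_0(r)}$.

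\emph{Remaining strata.} It remains to show that $\mathcal{L}^0_\rho=0$ for every $\rho=[r_1,\dots,r_k]$ with $k\ge2$. The stalk of $\mathcal{L}^0_\rho$ at $V$ lies in $H^{\dim\Ms_0(r)-\dim M_\rho}(\pi^{-1}(V))$. So I need the dimension inequality
\[
\operatorname{codim} M_\rho > 2\dim\pi^{-1}(V),
\]
after which the cohomology group vanishes for degree reasons.

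The codimension is computed as follows. We have $\dim M_\rho=\sum_i(r_i^2-1)(g-1)+(k-1)g$, the last term coming from the fixed determinant. Hence
\[
\operatorname{codim} M_\rho=\Big(2\sum_{i<j}r_ir_j+k-1\Big)(g-1)-(k-1)g=2\sum_{i<j}r_ir_j(g-1)-(k-1).
\]
This has to be compared with
\[
2\dim\pi^{-1}(V)=2\sum_{i<j}r_ir_j(g-1)+2(r-k),
\]
taken from Theorem \ref{thm:dtpi}(i). The naive comparison fails, since $-(k-1)<2(r-k)$.

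So the degree bound from Corollary \ref{cor:stratasupp} alone is not sufficient, and this is the main obstacle. The sharper tool is relative Hard Lefschetz, Theorem \ref{Thm:HardLef}(ii): the supports of ${}^p\mathcal{H}^0$ coincide with those of ${}^p\mathcal{H}^{2r-2}$. By Corollary \ref{cor:stratasupp}, a stratum supports ${}^p\mathcal{H}^{2r-2}$ only if
\[
\operatorname{codim} M_\rho+2r-2\le 2\dim\pi^{-1}(V).
\]
Substituting the values above, this becomes $2r-2-(k-1)\le 2(r-k)$, that is, $k\le 1$. This excludes every $\rho$ with $k\ge2$.

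If this bookkeeping is off by the shift conventions in Theorem \ref{thm:dtpi}(ii) for the stalk degrees, I would fall back on the explicit description of the supports in \cite{ParCamOTASz}, which shows that ${}^p\mathcal{H}^0$ has only the generic support. Either way, the conclusion is ${}^p\mathcal{H}^0=IC_{\Ms_0(r)}$, which proves \eqref{eq:prop:lowestperv}.
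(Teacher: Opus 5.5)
Your proposal is correct and follows the paper's own proof. As in the paper, you apply Corollary \ref{cor:stratasupp} to ${}^p\mathcal{H}^{2r-2}$ to force $k\le 1$, transfer this support statement to ${}^p\mathcal{H}^{0}$ by relative Hard Lefschetz, and identify the surviving summand with $IC_{\Ms_0(r)}$ using Theorem \ref{thm:dtpi}(iii) before taking hypercohomology. Your explicit codimension computation, $\mathrm{codim}\, M_\rho = 2\sum_{i<j}r_ir_j(g-1)-(k-1)$, correctly supplies the simple calculation that the paper leaves to the reader.
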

\begin{proof}
By Corollary \ref{cor:stratasupp}, a stratum $M^\mathrm{ab}_\rho$ can appear as a support in the decomposition \eqref{eq:dtlocsyst} of ${}^p\mathcal{H}^{2r-2}$ only if 
\begin{equation*}\mathrm{codim}(M^\mathrm{ab}_\rho\subset \Ms_0(r))+2r-2\leq 2\dim(\pi^{-1}(V)), \text{ for } V\in M^\mathrm{ab}_\rho. 
\end{equation*}
A simple calculation shows that this inequality is satisfied only for $k\leq 1$. Consequently, the unique stratum appearing as a support in ${}^p\mathcal{H}^{2r-2}$, and hence, by the Relative Hard Lefschetz Theorem \ref{Thm:HardLef}(ii), also in  ${}^p\mathcal{H}^{0}$, is a smooth locus $M_{[r]}$. 

Hence we have  $$^\mathfrak{p}\mathcal{H}^0\simeq IC(\overline{M_{[r]}},\mathcal{L}_{[r]}^0) = IC(\mathcal{M}_0(r),\mathbb{Q}_{M_{[r]}}),$$ where  the last equality follows from Theorem \ref{thm:dtpi}(iii). 
Taking the hypercohomology in degree ${d-(r^2-1)(g-1)}$ yields the isomorphism \eqref{eq:prop:lowestperv}. 
\end{proof}

\noindent\textbf{Notation:} We denote by $i\colon IH^*(\Ms_0(r))\xrightarrow{\simeq} H^*_0(\Ps_0(r))$ the identification given by Proposition \ref{prop:lowestperv}\textcolor{red}.

\subsection{The Poincar\'e--Verdier pairing on $IH^*(\Ms_0(r))$}\label{S:PVpairing}

Recall that the aim  of this paper is to compute explicitly the Poincar\'e--Verdier pairing on the intersection cohomology $IH^*(\Ms_0(r))$. A key step in this calculation is provided by the following proposition, which relates  the ordinary
Poincar\'e pairing on the cohomology $H^*(\mathcal{P}_0(r))$ to the intersection pairing on $IH^*(\mathcal{M}_0(r)) $ via the identification of  $IH^*(\mathcal{M}_0(r))$ with the subset of
$H^*(\mathcal{P}_0(r))$, described in Proposition \ref{prop:lowestperv}.

\begin{proposition}\label{prop:PDtoPV}
Let $\mathcal{L}$ be a $\pi$-ample line bundle over $\Ps_0(r)$ such that $\mathrm{deg}(\mathcal{L}|_{\pi^{-1}(V)})=1$ for every stable bundle $V\in\Ms_0(r)$, and set $\eta=c_1(\mathcal{L})\in H^2(\Ps_0(r))$. 
For any two classes $\alpha, \beta\in IH^*(\Ms_0(r))$ satisfying $\mathrm{deg}(\alpha)+ \mathrm{deg}(\beta) = 2\mathrm{dim}(\Ms_0(r))$, define 
\begin{equation}\label{eq:lemma:concretedescr2}
 B(\alpha, \beta) := \int_{\Ps_0(r)}i(\alpha)\cup i(\beta)\cup \eta^{r-1}.
\end{equation}
Then  $B(\cdot,\cdot)$ coincides with the nondegenerate Poincar\'e--Verdier pairing on the intersection cohomology $IH^*(\Ms_0(r))$.
\end{proposition}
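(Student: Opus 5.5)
The plan is to work at the level of complexes on $\Ms_0(r)$. We use the decomposition \eqref{eq:dtsheaves}, the relative Hard Lefschetz isomorphism for $\eta$, and the fact that Verdier duality on $R\pi_*\mathbb{Q}_{\Ps_0(r)}[\nn]$ (with $\nn=\dim\Ps_0(r)$) is induced by Poincar\'e duality on $\Ps_0(r)$, since $\pi$ is proper.

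\textbf{Step 1 (where the pairing lives).} Write $K=R\pi_*\mathbb{Q}_{\Ps_0(r)}[\nn-(r-1)]$. Poincar\'e duality gives a perfect pairing $K\otimes K\to \mathbb{D}_{\Ms_0(r)}[-2(r-1)]$, and hence $\int_{\Ps_0(r)} x\cup y$ on $H^*(\Ps_0(r))$. Theorem \ref{thm:pvdual} tells us this pairing identifies ${}^p\HH^{i}$ with the Verdier dual of ${}^p\HH^{2r-2-i}$. So for $x,y\in P_0H^*(\Ps_0(r))$ the plain integral $\int x\cup y$ vanishes, since $P_0$ pairs only with the top piece. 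This is why the factor $\eta^{r-1}$ is needed.

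\textbf{Step 2 (translating by Hard Lefschetz).} By Theorem \ref{Thm:HardLef}, cup with $\eta^{r-1}$ gives an isomorphism ${}^p\HH^0\to{}^p\HH^{2r-2}$. Combined with the duality ${}^p\HH^{2r-2}\simeq \mathbb{D}({}^p\HH^0)$, this yields a pairing
\[
{}^p\HH^0\otimes{}^p\HH^0\to \mathbb{D}_{\Ms_0(r)}.
\]
It is nondegenerate because both maps are isomorphisms. On hypercohomology, using the splitting of De Cataldo--Migliorini for the relatively ample $\eta$, this pairing is exactly $(x,y)\mapsto\int x\cup y\cup\eta^{r-1}$ for $x,y\in i(IH^*)$. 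We also need that the contributions of the other summands vanish. For this we check that cupping with $\eta^{r-1}$ sends $P_0$ into $P_{2r-2}$, and that $P_0$ is orthogonal to $P_{2r-3}$ under the Poincar\'e pairing.

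\textbf{Step 3 (identifying with the intersection pairing).} Since ${}^p\HH^0\simeq IC_{\Ms_0(r)}$ by Proposition \ref{prop:lowestperv}, the pairing from Step 2 is a morphism $IC\otimes IC\to\mathbb{D}$ that is nondegenerate. Restricted to the smooth stable locus $M_{[r]}$, the space $IC$ is the constant sheaf and the map $\pi$ is a $\mathbb{P}^{r-1}$-bundle. There the pairing is fiber integration of $\eta^{r-1}$, which equals $\deg(\mathcal{L}|_{\pi^{-1}(V)})^{r-1}$ and hence $1$, times the usual product. This is exactly the normalization of the Poincar\'e--Verdier pairing. Since $\mathrm{Hom}(IC\otimes IC,\mathbb{D})=\mathrm{Hom}(IC,\mathbb{D}IC)=\mathrm{End}(IC)$, and this equals $\mathbb{Q}$ because $\Ms_0(r)$ is irreducible and $IC$ is simple, a pairing is determined by its restriction to the open dense stratum. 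Hence $B$ coincides with the Poincar\'e--Verdier pairing.

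\textbf{Main obstacle.} The delicate step is Step 2. We must verify that the global integral $\int x\cup y\cup\eta^{r-1}$ on $P_0=H_0$ really equals the hypercohomology of the sheaf-level pairing, with no cross-terms from ${}^p\HH^{i}$ for $i>0$. The first approach is to use that the perverse filtration is compatible with cup product by $\eta$ and with duality, so that $\eta^{r-1}\cup P_0\subset P_{2r-2}$. The Poincar\'e pairing then induces the pairing $H_0\otimes H_{2r-2}\to\mathbb{Q}$, because $P_0$ annihilates $P_{2r-3}$. This pairing is independent of lifts, which makes the argument canonical.
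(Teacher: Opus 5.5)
Your proposal is correct and follows essentially the paper's route. Nondegeneracy comes from relative Hard Lefschetz for $\eta^{r-1}$ together with the duality between the zeroth and top perverse pieces (including the orthogonality $P_0\perp P_{2r-3}$ that rules out cross-terms), and the identification with the Poincar\'e--Verdier pairing comes from comparing on the stable locus $M_{[r]}$, where $\pi_*\eta^{r-1}=1$. The one difference is that you make the reduction to the open stratum explicit at the sheaf level via $\mathrm{Hom}(IC\otimes IC,\mathbb{D})=\mathrm{End}(IC)=\mathbb{Q}$, which the paper instead imports from de Cataldo--Migliorini; your version also gives a cleaner meaning to the paper's comparison of integrals over the non-compact sets $M_{[r]}$ and $\pi^{-1}(M_{[r]})$.
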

\begin{proof} 
Let $n := (r^2 - 1)(g - 1) = \dim (\Ms_0(r)).$ 
We first show that the bilinear form $B(\cdot,\cdot)$ is nondegenerate. Suppose that $\alpha \in IH^{2n-k}(\Ms_0(r))$ satisfies
$$ B(\alpha, \beta) = 0 \quad \text{for all } \beta \in IH^k(\Ms_0(r)). $$
We have to show that $\alpha = 0$.
Without loss of generality, we may assume $ k \leq n$. Then for any $\beta \in IH^k(\Ms_0(r))$, we can write
$$B(\alpha, \beta) = \langle i(\alpha), \eta^{r-1} \cup i(\beta)\rangle_{H(P)},$$ where $\langle\cdot,\cdot\rangle_{H(P)} $ denotes the Poincar\'e pairing in $H^*(\Ps_0(r))$. 

By the Relative Hard Lefschetz theorem, cup product with $\eta^{r-1}$ induces an isomorphism $$\cup \eta^{r-1}\colon H_0^{k}(\Ps_0(r))\xrightarrow{\sim}H_{2r-2}^{k+2r-2}(\Ps_0(r)).$$ Hence the vanishing of $ \langle i(\alpha), \eta^{r-1} \cup i(\beta)\rangle_{H(P)}$ for all $i(\beta)\in H_0^{k}(\Ps_0(r))$ is equivalent to  $$ \langle i(\alpha), \beta'\rangle_{H(P)}=0 \text{ \; for all\;  } \beta'\in H_{2r-2}^{k+2r-2}(\Ps_0(r)).$$ It follows from Theorem \ref{Thm:HardLef} that in this case $i(\alpha)=0$, and since $i$ is injective, we conclude that $\alpha=0$.

It remains to identify the bilinear form $B(\cdot,\cdot)$ with the Poincar\'e--Verdier pairing $\langle\cdot,\cdot\rangle_{IH(M)}$ on $IH^*(\Ms_0(r))$. 
By \cite[Remark 1.1.3]{dCM07}, it suffices to show that these two nondegenerate pairings agree when restricted to the smooth locus $M_{[r]}\subset \Ms_0(r)$ of stable vector bundles. 
More explicitly, consider the diagram
\begin{equation}\label{cd:restr}
\begin{tikzcd}
H^*(\Ps_0(r)) \arrow{r}{J^*} & H^*(\pi^{-1}(M_{[r]}))  \\
IH(\Ms_0(r)) \arrow[swap]{u}{i}  \arrow{r}{j^*} & IH^*(M_{[r]})\simeq  H^*(M_{[r]}) \arrow{u}{\pi^*},
\end{tikzcd}
\end{equation}
where $j\colon M_{[r]}\hookrightarrow \Ms_0(r)$ and $J\colon \pi^{-1}(M_{[r]})\hookrightarrow \Ps_0(r)$ denote the open embeddings. 
Since the Decomposition Theorem \ref{Thm:DT} is compatible with restricting to dense open sets (see e.g. \cite[\S 9.3]{L19}), this diagram commutes. To prove Proposition \ref{prop:PDtoPV},
it is enough to show that for any classes $\alpha \in IH^{2n-k}(\Ms_0(r))$ and $\beta\in IH^{k}(\Ms_0(r))$, the following equality holds:
\begin{equation}\label{eq:restrcoincide}
\int_{M_{[r]}} j^*(\alpha)\cup j^*(\beta) = \int_{\pi^{-1}(M_{[r]})} J^*(i(\alpha)\cup i(\beta)\cup \eta^{r-1}).
\end{equation}

Over the smooth points $V\in M_{[r]}$, the fiber $\pi^{-1}(V)$ is isomorphic to $\mathbb{P}^{r-1}$, and its fundamental class is $\eta^{r-1}|_{\pi^{-1}(V)}$. 
By the assumptions on $\eta$, we have $\pi_*J^*(\eta^{r-1}) = 1\in H^0(M_{[r]})$. Therefore, by the projection formula we obtain 
\begin{multline*}
\int_{M_{[r]}} \!j^*(\alpha)\cup j^*(\beta) \!=\! \int_{M_{[r]}} \!j^*(\alpha)\cup j^*(\beta)\cup \pi_*J^*(\eta^{r-1}) \!=\! \\ \int_{\pi^{-1}(M_{[r]})}\! \pi^{*}j^*(\alpha)\cup \pi^*j^*(\beta)\cup J^*(\eta^{r-1}). 
\end{multline*}
Using the commutativity of the diagram \eqref{cd:restr}, we rewrite this expression as
\begin{equation*}
 \int_{\pi^{-1}(M_{[r]})} J^*(i(\alpha)\cup i(\beta)\cup \eta^{r-1}), 
\end{equation*}
and thus we arrive at \eqref{eq:restrcoincide}.
\end{proof}

\begin{remark}\label{rem:toJKKW1}
In \cite{dCM05}, de Cataldo and Migliorini show that the perverse filtration $P_\bullet$ on $H^*(\Ps_0(r))$ is compatible with the intersection form on $H^*(\Ps_0(r))$. More precisely, the induced pairing between  the graded pieces $H^*_i(\Ps_0(r))$ and $H^*_j(\Ps_0(r))$ vanishes unless $i+j=2r-2$. Using this  orthogonality property,  one can show that the power of the relatively ample class in Proposition \ref{prop:PDtoPV} may be replaced by the top Chern class $c_{r-1}(\F_1^*\otimes \mathcal{U}_{p}/\F_1)$ 
of the relative virtual tangent bundle of the morphism $\pi$, scaled by $1/r$. \end{remark}

Proposition \ref{prop:PDtoPV} thus reduces the problem of computing the Poincar\'e--Verdier pairing on $IH^*(\Ms_0(r))$ to computing the intersection pairing on $H^*(\Ps_0(r))$. In the next sections we perform these computations, following the strategy described in \S\ref{S:plan}.

\section{Jeffrey--Kirwan formulas for the intersection pairing in $H^*(\Ms_1(r))$}\label{S:4}
Our aim in this section is to present the iterated residue formulas of Jeffrey and Kirwan \cite{JeffreyK} for intersection pairings on the moduli space $\Ms_1(r)$, in a form adapted to our later applications. For the result, see Theorem \ref{ThmFKtoHam}. 
\subsection{Generators of the cohomology ring $H^*(\Ms_1(r))$}\label{S:genHM}
We begin by describing a system of generators for the cohomology ring $H^*(\Ms_1(r))$ of the moduli space $\Ms_1(r)$ of stable vector bundles of rank $r$, degree $1$, and fixed determinant on a smooth curve $C$. Following Atiyah and Bott \cite{AtiyahBott}, it can be constructed as follows.

Over the product \(\mathcal{M}_{1}(r)\times C\) there exists a universal vector bundle $\mathcal{U}$ 
of rank $r$, well-defined up to tensoring with the pullback of a line bundle from $\mathcal{M}_{1}(r)$.  
Although the universal bundle itself is not unique, the bundle 
$$\widetilde{U} \overset{\mathrm{def}}{=} \mathcal{U}\otimes \mathrm{det}(\mathcal{U})^{-\frac{1}{r}}$$ is canonical. We note that $c_1(\widetilde{U})=0$, and for $2\le k\le r$,  $1\le j\le 2g$ we
introduce the cohomology classes
$$
\atil_{k}\in H^{2k}(\mathcal{M}_{1}(r)), 
\qquad
\btil^{\,j}_{k}\in H^{2k-1}(\mathcal{M}_{1}(r)),
\qquad
\widetilde{f}_{k}\in H^{2k-2}(\mathcal{M}_{1}(r)),
$$
 by means of the K\"unneth decomposition
$$c_k(\widetilde{U}) = \atil_k\otimes 1 + \sum_{j=1}^{2g}\btil_k^j\otimes e_j + \ftil_k\otimes \omega\in H^*(\Ms_1(r))\otimes H^*(C),$$
where $\omega\in H^{2}(C)$ is the fundamental class of the curve and $\{e_{j}\}$ is a basis of $H^{1}(C)$, such that $e_ie_{i+g}=\omega$ for $1\leq 
i\leq g$, and all other intersection numbers $e_ie_j$ equal 0.   
According to \cite[Proposition 2.20]{AtiyahBott}, these classes generate the cohomology ring $H^{*}(\mathcal{M}_{1}(r))$.

The goal of this section is to present the formulas, which compute all intersection pairings  of the form
\begin{equation}\label{eq:JKgeneralint} 
\int_{\Ms_1(r)}\prod_{k=2}^{r}\atil_k^{m_k}{\ftil_k}^{n_k}\prod_{j_k=1}^{2g}(\btil_{k}^{j_k})^{l_{k,j_k}}.
\end{equation}

To do this, we introduce some additional notation in the next subsections. For more details, we refer the reader to \cite{Szimrn, OTASz, Unyu, JeffreyK}.

\subsection{Notation: Hamiltonian bases and  $\iber$}\label{S:NotationBases}

\begin{itemize}[leftmargin=*, itemsep=0pt, topsep=2pt]
\item Let $T\subset SU_r$ be a maximal torus. We represent the Cartan subalgebra $V=\mathrm{Lie}(T)$ of the Lie algebra $\mathrm{Lie}(SU_r)$ as the quotient vector space 
 $$ V=\R^r/\R(1,1,\dots,1). $$ 
 There is a natural pairing between $V$ and 
\[ V^*=\{a=(a_1,\dots,a_r)\in\R^r|\; a_1+\dots+a_r=0\}. \]

\item Let $ x_1,x_2,\dots,x_r $ be the coordinates on $ \R^r $; 
given $ a\in V^* $, we will write $\scp ax  $ for the 
linear function $ \sum_ia_ix_i $ on $ V $. We will 
sometimes denote this function simply by $ a $.

\item Let $\Lambda$  be the integer lattice  in the vector space $V^*$:
\[ \Lambda=\{\lambda=(\lambda_1,\dots,\lambda_r)\in\Z^r|\; 
\lambda_1+\dots+\lambda_r=0\}. \]
For  $ 1\le i\neq j\le r $, we define 
the 
element $ \alpha^{ij} =x_i-x_j $ in $ \Lambda $. Let
\[ \rr = \{\ \pm\alpha^{ij}  |\,1\le i< j\le r\}  \]
be the set of roots of the $A_{r-1}$ root system with the opposite roots identified.

\item A basic object of our approach is an \textit{ordered} linear basis $ \mathbf{B} $  of $ V^* $ consisting of the elements of $ \Phi$. We denote by 
$\Bases$ the set of all such bases:
\[ \Bases= \left\{\mathbf{B}=\left(\beta^{[1]},\dots,\beta^{[r-1]}\right) \in\Phi^{r-1}|\; \bb \text{ -- basis of }  V^*\right\} .\]
\end{itemize}
Among these, we will be primarily interested in a distinguished family of bases defined via the action of the permutation group $\Sigma_r$. 
\begin{itemize}[leftmargin=*, itemsep=0pt, topsep=2pt]
\item Note that $V^*$ carries a natural action of  $\Sigma_r$, permuting the coordinates $x_j$ for $j=1,2,...,r$, and this action preserves the root system $\rr$. For each permutation $  \sigma\in \Sigma_r $, we define 
\begin{equation*}\label{eq:defsigmaB}
\bb_\sigma = 
(\alpha^{\sigma(r-1),\sigma(r)},\alpha^{\sigma(r-2),\sigma(r-1)},\dots,
\alpha^{\sigma(1),\sigma(2)})
\in\Bases. 
\end{equation*}
\end{itemize}

\noindent\textbf{Notation:}\label{not:Ham}  For any  $m\in\{1,2,...,r\}$, we define $ \HH_m
=\{\bb_\sigma|\;\sigma\in\Sigma_r,\,\sigma(1)=m\}$, 
and refer to the set $\HH_m$ as \emph{Hamiltonian basis}.

\begin{itemize}[leftmargin=*, itemsep=0pt, topsep=2pt]
\item Given a basis $\mathbf{B}=\left(\beta^{[1]},\dots,\beta^{[r-1]}\right) \in \Bases$ of $V^*$, and an element $a\in V^*$, we define $[a]_\mathbf{B}\in\Lambda$ to be the unique element of $V^*$ satisfying $$[a]_\bb = a- \{a\}_{\bb}, \text{\;\; where \;\;} \{a\}_\bb\in\sum_{j=1}^{r-1}[0,1)\beta^{[j]}.$$

\item Any basis $ \bb\in\Bases $ induces an isomorphism $V^* \simeq V$, and we will write ${\check{\alpha}}_\bb$ for the image of $\alpha\in V^*$ under this isomorphism. When no ambiguity arises, we omit the subscript $\bb$ to simplify the notation.

\item For a holomorphic function $Q$ on $V$  and $\alpha\in V^*$, we write  $Q_{\check{\alpha}_\bb}$ for the corresponding directional derivative. 
Then, given $Q$ and $ \bb\in\Bases $, we fix a homomorphism 
$$\alpha \mapsto \exp(Q_{\check{\alpha}_\bb})$$ from the additive group $V^*$ to the multiplicative group of non-vanishing holomorphic functions on $V\otimes_{\R}\C$. 
\end{itemize}

 With these ingredients in place, let $\bb=\left(\beta^{[1]},\dots,\beta^{[r-1]}\right) \in \Bases$ be a basis of $V^*$, let $f$ be a meromorphic function defined in a neighborhood of $0$ in 
$V \otimes_{\R} \C$ with poles contained in the union of hyperplanes
\begin{equation}\label{eq:hyperplanes}
  \bigcup_{1\le i<j\le r}\{x|\;x_i-x_j=0\},
\end{equation}
   and let $$Q(x)=\mathrm{const}\cdot \sum_{i<j}(x_i-x_j)^2+ \textit{higher order terms}$$ be a holomorphic function defined near $0$ in $V\otimes_{\R}\C$. Then we define
  \begin{equation}\label{defiber}
	\iber_{\mathbf{B},Q}   \left[ f(x) \right](a)\overset{\mathrm{def}}=
\frac1{(2\pi i)^{r-1}} \int\displaylimits_{Z_\bb} 
\frac{ 	f(x)\exp (Q_{\check{a}_\bb})\;d Q_{\check{\beta}^{[1]}}	\wedge 
	\dots\wedge d{Q}_{\check{\beta}^{[r-1]}}\;}{(1-\exp({Q}_{\check{\beta}^{[1]}}))\;
	\dots(1-\exp({Q}_{\check{\beta}^{[r-1]}}))\;},
\end{equation}
where the naturally oriented cycle $ Z_{\bb} $
is given by
\[ Z_{\bb} = \{x\in 
V\otimes_{\R}\C:\,|\scp{\beta^{[j]}}x|\;=\varepsilon_j,\, 
j=1,\,\dots,r-1 \}\subset 
V\otimes_{\R}\C\setminus\bigcup_{1\le i<j\le r}\{x|\;x_i=x_j\} \]
for sufficiently small fixed real constants 
$ 0\le\varepsilon_{r-1}\ll\dots\ll \varepsilon_{1}$. 
Note that operator $\iber_{\bb, Q}$ defines a linear map from the space of meromorphic functions on $V\otimes_\mathbb{R}\C$ with poles contained in \eqref{eq:hyperplanes} to the space of polynomials on $V^*$.

\begin{remark}\label{rem:iBer1}
As explained in \cite[Remark 4.3]{OTASz}, the linear operator $\iber_{\mathbf{B}, Q}$ has the following computational property. 
For $i=1,...,r-1$ we define $y_i=\scp{{\beta}^{[i]}}{x}$ and write $f$, $Q_{\check{\beta}^{[i]}}$ and $Q_{\check{a}}$ in these coordinates: $f(x)=\hat{f}(y)$, $Q_{\check{\beta}^{[i]}}(x) = \hat{Q}_{\check{\beta}^{[i]}}(y)$, $Q_{\check{a}}(x) = \hat{Q}_{\check{a}}(y)$. Then 
\[  \iber_{\mathbf{B}, Q}   \left[ f(x) \right] (a) =
\underset{y_1=0}{\res}\dots\underset{y_{r-1}=0}{\res}\frac{ 
	\hat f(y)\exp( \hat{Q}_{\check{a}})\;d \hat{Q}_{\check{\beta}^{[1]}}	\wedge 
	\dots\wedge d\hat{Q}_{\check{\beta}^{[r-1]}} }{(1-\exp(\hat{Q}_{\check{\beta}^{[1]}}))\;
	\dots(1-\exp(\hat{Q}_{\check{\beta}^{[r-1]}}))\;},
\]
where \textit{iterating} the residues means that at each step  we keep the variables with lower indices as unknown constants.
\end{remark}
We also observe the following simple facts, which will be used later.
\begin{remark}\label{rem:iBer2}
For any vector $v$ from the integer lattice $\Lambda$ in $V^*$ we have:  $\iber_{\bb, Q}   \left[ f(x) \right] (a+v) =  \iber_{\bb, Q}[ f(x)\exp(Q_{\check{v}_\bb}) ] (a)$.
\end{remark}
\begin{remark}\label{rem:iBer3}
For any permutation $\sigma\in\Sigma_r$, the change of variables $x=\sigma(y)$ yields the identity $\iber_{\sigma\bb}[f(x)](\sigma a-[\sigma c]_{\sigma\bb})= \iber_{\bb}[\sigma\cdot f(x)](a-[c]_\bb)$.
\end{remark}

\subsection{Notation: cohomology of the maximal torus $T\subset SU_r$}\label{S:cohTorus}  
Since the Jeffrey--Kirwan iterated residue formulas for the intersection pairings \eqref{eq:JKgeneralint} require integrating over $T^{2g}$ (see Theorem \ref{JKtheorem}), we need to introduce  additional notation for the cohomology classes in $H^*(T^{2g})$.

Choose an orthonormal basis $(u_1,u_2,...,u_{r-1})\in\Bases$, and let $\check{u}_1, \check{u}_2,..., \check{u}_{r-1}$ denote the dual basis of $V=\mathrm{Lie}(T)$. Write the Maurer-Cartan form $\theta\in \Omega^1(T)\otimes\mathrm{Lie}(T)$ in this basis as $$\theta = \sum_{a=1}^{r-1}\theta_a \check{u}_a.$$ 
The components $\theta_a$ then form a set of generators of $H^1(T)$, and according to \cite[Lemma 10.7]{JeffreyK}, we have the identity
\begin{equation*}
\int_{T} \theta_1\wedge \theta_2\wedge...\wedge \theta_{r-1}= \sqrt{r}.
\end{equation*}
Next, we define corresponding generators on $T^{2g}$. For each projection
$$\pi_j: T^{2g}\to T, \;\;\; j=1,2,...,2g$$
onto the $j^{\mathrm{th}}$ factor, set 
$$\zeta^j_a = \pi_j^*\theta_a, \;\;\; a=1,2,...,r-1.$$
The forms $\theta^j_a$ together provide a natural system of generators for $H^1(T^{2g})$, and (see \cite[Lemma 10.10]{JeffreyK}) we have:
\begin{equation}\label{eq:intTr}
\int_{T^{2g}} \exp\Big( \sum_{j=1}^g\sum_{a=1}^{r-1} \zeta_a^j\wedge\zeta_a^{g+j} \Big)= r^g.
\end{equation}

\subsection{Jeffrey--Kirwan iterated residue formulas}
With these preparations in place, we are  ready to present the formulas of Jeffrey and Kirwan, which describe all the intersection numbers on the moduli space $\Ms_1(r)$.

Let
 $\tau_k(x)$, for $k=2,...,r$, denote  the symmetric polynomials on $V\otimes_\R\C$ associated to the Chern classes $c_2,..., c_r$ of  the vector bundle $\widetilde{U}$ (see \S\ref{S:genHM}).  In particular, one has 
\begin{equation}\label{eq:tau2}
\tau_2(x)=\sum_{i<j}\Big(x_i- \frac{1}{r}{\sum_{a=1}^rx_a}\Big)\Big(x_j- \frac{1}{r}{\sum_{a=1}^rx_a}\Big) = -\frac{1}{2r}\sum_{i<j}(x_i-x_j)^2
\end{equation}
which, up to normalization, coincides with the Killing form of $SU_r$. Using this notation, the intersection numbers in the cohomology ring $H^*(\Ms_1(r))$ can be computed explicitly, as stated in the following theorem.

\begin{theorem}\cite[Theorem 9.12(a)]{JeffreyK}\label{JKtheorem} 
For $2\leq k\leq r$ and $1\leq j\leq 2g$, let $\atil_k, \ftil_k, \btil^j_k\in H^*(\Ms_1(r))$ and  $\zeta^j_k\in H^1(T^{2g})$ denote the cohomology classes defined in \S\ref{S:genHM} and \S\ref{S:cohTorus}. Let $Q(x)$ be a symmetric polynomial on $V\otimes_\R\C$, expressed in terms of the elementary symmetric polynomials $\tau_k$ as 
\begin{equation*}
Q(x) = \sum_{k=2}^r\delta_k\tau_k(x), 
\end{equation*}
where $\delta_k$ are formal nilpotent parameters, such that $\delta_2\neq 0$.  Define ${c}=(1/r,...,1/r,(1-r)/r)\in V^*$ and $\bb = (\alpha^{1 2}, \alpha^{2 3},...,\alpha^{r-1\, r} )\in\Bases$; 
then we have 
\begin{multline}\label{eq:16}
\int_{\Ms_1(r)}\exp(\delta_2\ftil_2+\delta_3\ftil_3+...+\delta_r\ftil_r)\prod_{k=2}^r\atil_k^{m_k} \prod_{j_k=1}^{2g}(\btil_k^{j_k})^{l_{k,j_k}} =  \\
\frac{(-1)^{{r \choose 2}(g-1)}}{r!} \sum_{w\in \Sigma_{r-1}} 
\iber_{\mathbf{B},Q} \Biggl[ \frac{\prod_{k=2}^r\tau_k(x)^{m_k} \exp(Q_{w\check{{c}}_\bb})}{\prod_{i<j}(x_i-x_j)^{2g-2}\, \mathrm{det}(\mathrm{Hess}_\bb(Q))}  \\
\int_{T^{2g}}\exp\left(-\sum_{a,b}\sum_{j=1}^g\zeta^j_a\zeta^{j+g}_b Q_{\check{u}_a\,\check{u}_b} \right) 
\prod_{k=2}^r\prod_{j_k=1}^{2g}\left(\sum_a \zeta_a^{j_k}\tau_{k_{\check{u}_a}}\right)^{l_{k,j_k}}\Biggr](-[w{c}]_\bb),
\end{multline} 
where the first sum runs over the elements $w\in \mathrm{Stab}_{r}\simeq\Sigma_{r-1}\subset \Sigma_{r}$, $\mathrm{Hess}_\bb(Q)$ denotes the Hessian matrix:  $\mathrm{Hess}_\bb(Q)_{i j} = \frac{\partial}{\partial \beta^{[i]}} Q_{\check{\beta}^{[j]}_\bb},$ and $(\check{u}_1, \check{u}_2,..., \check{u}_{r-1})$ is an orthonormal basis of $V$. 
\end{theorem}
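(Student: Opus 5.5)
The plan is not to reprove the localization theorem but to derive \eqref{eq:16} from the original statement of \cite[Theorem 9.12(a)]{JeffreyK}, by a dictionary between their conventions and the notation of \S\ref{S:genHM}--\S\ref{S:cohTorus}. The starting point is the Jeffrey--Kirwan formula expressing $\int_{\Ms_1(r)}\exp(\sum_k\delta_k\ftil_k)\,\eta$ as $\frac{(-1)^{n_+(g-1)}}{|W|}$ times an iterated residue. Here $n_+={r\choose 2}$ is the number of positive roots, and $\eta$ denotes the monomial in the $\atil_k$ and $\btil_k^j$. The residue is taken of a sum over the lattice $\Lambda$ (twisted by the element $c$ encoding degree $1$) of
\[
e^{Q(X+\lambda)}\,\frac{\prod_k\tau_k(X)^{m_k}}{\prod_{i<j}(x_i-x_j)^{2g-2}}\,\int_{T^{2g}}(\cdots),
\]
with the Gaussian fermionic integral over $T^{2g}$ producing the $\btil$-insertions. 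First I will match generators. The classes $\atil_k,\ftil_k,\btil_k^j$, defined from $c_k(\widetilde U)$, correspond under the Atiyah--Bott map to the invariant polynomials $\tau_k$ and their derivatives $\tau_{k_{\check u_a}}$ paired with $\zeta^j_a$. The normalization \eqref{eq:tau2} fixes the quadratic form $\tau_2$, and \eqref{eq:intTr} together with $\int_T\theta_1\cdots\theta_{r-1}=\sqrt r$ fixes the torus constants.

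The second step turns the lattice sum plus residue into the operator $\iber_{\bb,Q}$. For the ordered basis $\bb=(\alpha^{12},\dots,\alpha^{r-1\,r})$, split $\lambda\in\Lambda$ along the cone generated by $\bb$ and sum the resulting geometric series in each coordinate $Q_{\check\beta^{[j]}}$. This produces exactly the denominators $\prod_j(1-\exp(Q_{\check\beta^{[j]}}))$. The fractional part $\{wc\}_\bb$ survives as the evaluation point $-[wc]_\bb$, using Remark~\ref{rem:iBer2} to move integral shifts into the exponent $\exp(Q_{w\check c_\bb})$.

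Next I change variables from $y_j=\langle\beta^{[j]},x\rangle$ to $Q_{\check\beta^{[j]}}$, as in Remark~\ref{rem:iBer1}. This is where the factor $1/\det\mathrm{Hess}_\bb(Q)$ appears: it converts $dQ_{\check\beta^{[1]}}\wedge\dots\wedge dQ_{\check\beta^{[r-1]}}$ back to $dy$. The hypothesis $\delta_2\neq0$ guarantees that this Hessian is invertible near $0$ in the formal sense.

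The third step reduces the Weyl sum from $\Sigma_r$ to $\mathrm{Stab}_r\simeq\Sigma_{r-1}$ using Remark~\ref{rem:iBer3}. The $\Sigma_r$-invariance of the integrand lets me group the permutations into cosets. I expect the contribution of the remaining coset representatives to be absorbed by choosing the particular ordered basis $\bb$ and flag of residues, as in \cite{Szimrn, OTASz}; this is what leaves the prefactor $1/r!$.

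The main obstacle is bookkeeping rather than conceptual: keeping track of signs, the orientation of the cycle $Z_\bb$, the power $(-1)^{{r\choose2}(g-1)}$, and the constants $r^g$ versus $\sqrt r$ under the change of orthonormal bases. The hardest point is justifying that the shift $[wc]_\bb$, rather than some other lattice representative, appears for the given ordering of $\bb$. I would check this first in rank $2$ against Thaddeus' intersection numbers, and then follow the general argument of \cite[\S4]{OTASz}, where the same translation is carried out.
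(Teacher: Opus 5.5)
\textbf{Verdict: there is a genuine gap.} The paper gives no proof of this statement. It is \cite[Theorem 9.12(a)]{JeffreyK} rewritten in the $\iber$ notation of \cite{Szimrn, OTASz}, so your opening plan, a dictionary from JK, is the right route and matches the paper. But Steps 2--3 are not a dictionary: they re-derive JK's theorem from an earlier lattice-sum stage. The two places where the real content lies are either wrong or left open.

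JK's Theorem 9.12(a), as I recall it (it is the form this transcription reflects), already contains the prefactor $(-1)^{n_+(g-1)}/n!$, the finite sum over $W_{n-1}$, the iterated residue with one-sided exponential denominators, and the fractional-part exponentials. Nothing has to be ``reduced''.

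\textbf{Step 3 fails.} Remark~\ref{rem:iBer3} trades a permutation of the evaluation point for a permutation of the basis. This is how the paper passes from the $w$-sum to the sum over $\HH_r$ in Theorem~\ref{ThmFKtoHam}. It is an identity and never makes a term vanish, and the extra cosets do not vanish. Take $r=2$, where $\mathrm{Stab}_2$ is trivial. For $w=(12)$ we have $wc=-\tfrac12\alpha^{12}$ and $[wc]_\bb=-\alpha^{12}$. Remark~\ref{rem:iBer2} turns the $w$-term into $\iber_{\bb,Q}[\,\cdots\exp(Q_{\frac12\check\alpha^{12}})](0)$, which is literally the $w=\mathrm{id}$ term. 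Hence $\frac{1}{2!}\sum_{w\in\Sigma_2}$ is twice the correct value; it is the single term that reproduces Thaddeus' $\int_{\Ms_1(2)}\ftil_2^{3g-3}$. You cannot ``absorb'' the remaining cosets and keep $1/r!$. The factor $1/r!=1/|W|$ is the Weyl normalization of JK's localization formula and survives unchanged. The restriction to $W_{r-1}$ is part of the statement of 9.12(a), not a consequence of $\Sigma_r$-invariance.

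\textbf{Step 2 has the same gap in another form.} $\bb$ is a $\Z$-basis of the root lattice $\Lambda$. If you sum over all of $\Lambda$, you get two-sided series $\sum_{n\in\Z}e^{nQ_{\check\beta^{[j]}}}$, and as rational functions $\frac{1}{1-e^t}+\frac{1}{e^t-1}=0$. If instead you sum only over the cone spanned by $\bb$, then the reason why only that cone contributes, with the specific representative $-[wc]_\bb$, is exactly the missing argument. You call this ``the hardest point'' and propose to test it in rank $2$, which proves nothing for general $r$.

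\textbf{What a complete argument needs.} Start from 9.12(a) as stated and match notation only.
\begin{enumerate}
\item Identify JK's $\mathrm{Res}_{Y_1=0}\cdots\mathrm{Res}_{Y_{n-1}=0}$ with $\iber_{\bb,Q}$ via Remark~\ref{rem:iBer1}. The factor $1/\det\mathrm{Hess}_\bb(Q)$ cancels the Jacobian of $dQ_{\check\beta^{[1]}}\wedge\dots\wedge dQ_{\check\beta^{[r-1]}}$.
\item Use Remark~\ref{rem:iBer2} to move the integral part $[wc]_\bb$ of the evaluation point into the integrand. This leaves JK's fractional-part exponential.
\item Match JK's torus integral and the Atiyah--Bott generators with the $\zeta^j_a$ and $\tau_{k_{\check u_a}}$ factors.
\item Track the sign conventions of JK's denominators against $1-\exp(Q_{\check\beta^{[j]}})$, together with $n_+=\binom{r}{2}$.
\end{enumerate}
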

We recall the following point from  \cite[Remark 9.13]{JeffreyK}. 
\begin{remark} 
In Theorem \ref{JKtheorem}, the classes $\zeta^j_k$ arise from the components of the Maurer--Cartan form $\theta \in \Omega^1(T) \otimes \mathrm{Lie}(T)$
with respect to an orthonormal basis $\check{u}_1, \ldots, \check{u}_{r-1}$ of $V = \mathrm{Lie}(T)$. In principle, this orthonormal basis  may be replaced by an arbitrary basis, provided that the classes $\zeta^j_a$ are defined using that basis; in that case, the second derivatives $Q_{\check{u}_a \check{u}_b}$ in \eqref{eq:16} must be replaced by $\frac{\partial}{\partial u_a} Q_{\check{u}_b}.$
Nonetheless, working with an orthonormal basis is convenient for computations over the torus $T^{2g}$.
\end{remark}
\begin{remark}
A simple calculation shows that for any $\bb\in\Bases$ we have $[(1/r,...,1/r,(1-r)/r)]_\bb = [(1/2r,...,1/2r, (1-r)/2)r]_\bb$.
\end{remark}

We conclude this section by rephrasing Theorem \ref{JKtheorem} in a form convenient for our forthcoming calculations.
\begin{theorem}\label{ThmFKtoHam}
In the notation of Theorem \ref{JKtheorem}, we have:
\begin{multline}\label{eq:rephrJK}
\int_{\Ms_1(r)}\exp(\delta_2\ftil_2+\delta_3\ftil_3+...+\delta_r\ftil_r)\prod_{k=2}^r\atil_k^{m_k} \prod_{j_k=1}^{2g}(\btil_k^{j_k})^{l_{k,j_k}} =  \\
\frac{(-1)^{{r \choose 2}(g-1)}}{r!} \sum_{\bb\in\HH_n} 
\iber_{\mathbf{B},Q} \Biggl[ \frac{\prod_{k=2}^r\tau_k(x)^{m_k} \exp(Q_{\check{{c}}_\bb})}{\prod_{i<j}(x_i-x_j)^{2g-2}\, \mathrm{det}(\mathrm{Hess}_\bb(Q))}  \\
\int_{T^{2g}}\exp\left(-\sum_{a,b}\sum_{j=1}^g\zeta^j_a\zeta^{j+g}_b Q_{\check{u}_a\,\check{u}_b} \right) 
\prod_{k=2}^r\prod_{j_k=1}^{2g}\left(\sum_a \zeta_a^{j_k}\tau_{k_{\check{u}_a}}\right)^{l_{k,j_k}}\Biggr](-[c]_\bb),
\end{multline} 
where the first sum now runs over the elements $\bb$ of the Hamiltonian basis $\HH_n$ for any $n\in\{1,2,...,r\}$ (see notation on page \pageref{not:Ham}).\footnote{More precisely, one may sum over the elements $\bb$ of any \emph{diagonal basis}; see \cite[\S3.2]{OTASz} for the definition. For brevity, we restrict to Hamiltonian bases in this paper.}
\end{theorem}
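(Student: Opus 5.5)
We derive Theorem \ref{ThmFKtoHam} from Theorem \ref{JKtheorem} by rewriting the sum over $w\in\mathrm{Stab}_r\simeq\Sigma_{r-1}$ as a sum over the Hamiltonian basis $\HH_n$. The left-hand sides agree, so it suffices to show that
\[
\sum_{w\in\Sigma_{r-1}}\iber_{\bb_0,Q}\bigl[F(x)\exp(Q_{w\check c_{\bb_0}})\bigr](-[wc]_{\bb_0})
=\sum_{\bb\in\HH_n}\iber_{\bb,Q}\bigl[F(x)\exp(Q_{\check c_\bb})\bigr](-[c]_\bb),
\]
where $\bb_0=(\alpha^{12},\dots,\alpha^{r-1\,r})$ and $F$ denotes the full bracketed integrand, excluding the exponential factor carrying $c$. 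The essential property of $F$ is that it is $\Sigma_r$-invariant. The factors $\tau_k$, $\prod_{i<j}(x_i-x_j)^{2g-2}$ and $Q$ are symmetric. The Hessian determinant transforms compatibly under a simultaneous permutation of the variables and the basis. The $T^{2g}$-integral is Weyl invariant because the Weyl group acts orthogonally on $V$ and the Haar measure on $T$ is invariant, so the change of variables $\zeta\mapsto w\zeta$ preserves the integral.

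\textbf{Step 1: convert the $c$-dependence into a dependence on the basis.} First, $\bb_\sigma$ for the identity permutation is $(\alpha^{r-1\,r},\dots,\alpha^{12})$, which is $\bb_0$ with the order reversed. I would fix this by relabelling with the reversal permutation, or, better, by noting that Theorem \ref{JKtheorem} holds with $\bb_0$ replaced by any of its $\Sigma_r$-translates. Next, for $w\in\Sigma_{r-1}$ (fixing $r$), apply Remark \ref{rem:iBer3} with $\sigma=w^{-1}$. This turns the term $\iber_{\bb_0}[F\exp(Q_{w\check c})](-[wc]_{\bb_0})$ into $\iber_{w^{-1}\bb_0}[F\exp(Q_{\check c})](-[c]_{w^{-1}\bb_0})$, using the invariance $w\cdot F=F$. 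To make this precise I would combine Remark \ref{rem:iBer2} with the definition \eqref{defiber}, absorbing the shift into the argument and checking that $\exp(Q_{\check a_\bb})$ transforms correctly under permutations.

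\textbf{Step 2: identify the index set.} As $w$ ranges over $\mathrm{Stab}_r$, the bases $w^{-1}\bb_0$ range over the chains $\bb_\sigma$ whose endpoint is the index $r$. With the ordering convention of $\bb_\sigma$, this is exactly $\HH_r$ (up to the reversal bookkeeping, which may instead give $\sigma(r)=r$; I would align the conventions accordingly). This proves the theorem for one value $n$.

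\textbf{Step 3: independence of $n$.} Here I would apply a global permutation $\tau\in\Sigma_r$ with $\tau(r)=n$, so that $\tau$ maps $\HH_r$ bijectively onto $\HH_n$. Using Remark \ref{rem:iBer3} again, $\tau c$ differs from $c$ by a lattice vector plus a permutation of the entries $1/r$. The residue $\{c\}_\bb$ then changes, and Remark \ref{rem:iBer2} converts the resulting lattice shift into a factor $\exp(Q_{\check v})$. The main obstacle is showing that this factor cancels. What is needed is that the total sum is independent of the choice of $[\cdot]_\bb$-representative. I would prove this by the wall-crossing / diagonal-basis argument of \cite[\S3.2]{OTASz}: the sum over a diagonal family of bases computes a Jeffrey--Kirwan residue that depends only on the chamber containing $c$, and $c$ and $\tau c$ are regular elements lying in the same chamber of the relevant arrangement. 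If this fails to match directly, the fallback is to cite \cite{OTASz} for the statement that any diagonal basis family yields the same total.
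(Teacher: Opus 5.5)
Your outline is the paper's proof. The paper obtains the sum over $\HH_r$ from Theorem~\ref{JKtheorem} by citing \cite[Remark 4.6]{OTASz}, which is your Steps 1--2. With opposite roots identified, $\bb_0=\bb_{\sigma_0}$ for $\sigma_0(k)=r+1-k$, so $\bb_0\in\HH_r$ already and neither the reordering nor the ``any $\Sigma_r$-translate'' detour is needed. The paper then passes to $\HH_n$ by the independence of the Hamiltonian basis \cite{Szimrn}, \cite[Theorem 4.4]{OTASz}, which is exactly your fallback.

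Your Step 3 obstacle is illusory, though, and the ``same chamber'' reason is wrong. A permuted copy of $c$ is a lattice translate of $c$, in general separated from it by a wall such as $\{a_r\in\Z\}$. What actually works is a term-by-term cancellation:
\begin{itemize}
\item Every $\bb\in\Bases$ is a $\Z$-basis of $\Lambda$.
\item So for $\lambda=\tau c-c\in\Lambda$ one has $[\tau c]_\bb=[c]_\bb+\lambda$ and $\{\tau c\}_\bb=\{c\}_\bb$; the fractional part does not change.
\item The factor $\exp(Q_{\check{\lambda}_{\bb}})$ coming from the explicit exponential cancels against the one produced by Remark~\ref{rem:iBer2}.
\end{itemize}
Hence, for this $\Sigma_r$-invariant integrand, independence of $n$ is elementary and needs no wall-crossing.
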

\begin{proof}
It follows from \cite[Remark 4.6]{OTASz} that the integral on the left-hand side of \eqref{eq:rephrJK} is equal to 
\begin{multline}\label{eq:rephrJK1}
\frac{(-1)^{{r \choose 2}(g-1)}}{r!} \sum_{\bb\in\HH_r} 
\iber_{\mathbf{B},Q} \Biggl[ \frac{\prod_{k=2}^r\tau_k(x)^{m_k} \exp(Q_{\check{{c}}_\bb})}{\prod_{i<j}(x_i-x_j)^{2g-2}\, \mathrm{det}(\mathrm{Hess}_\bb(Q))}  \\
\int_{T^{2g}}\exp\left(-\sum_{a,b}\sum_{j=1}^g\zeta^j_a\zeta^{j+g}_b Q_{\check{u}_a\,\check{u}_b} \right) 
\prod_{k=2}^r\prod_{j_k=1}^{2g}\left(\sum_a \zeta_a^{j_k}\tau_{k_{\check{u}_a}}\right)^{l_{k,j_k}}\Biggr](-[c]_\bb).
\end{multline} 
By \cite{Szimrn} (cf. also \cite[Theorem 4.4]{OTASz}) the sum in \eqref{eq:rephrJK1} is independent on the choice of the Hamiltonian basis, hence we obtain the expression \eqref{eq:rephrJK}. 
\end{proof}

\section{Intersection pairing in $H^*(\Ps_0(r))$}\label{S:5}
Our goal in this section is to present an explicit description of the intersection pairing on the moduli space $\Ps_0(r)$ of degree-zero stable parabolic bundles. For the result, see  Theorem \ref{Thm:intP0}.

 \subsection{Generators of the cohomology ring  $H^*(\Ps_0(r))$}\label{S:genP0} 
 We begin by describing the generators of the cohomology ring $H^*(\Ps_0(r))$. To do so, we first introduce some additional notation. 
 
 Let $\mathcal{U}_0$ and $\mathcal{U}_1$ be universal bundles over $\Ps_0(r)\times C$ and $\Ps_1(r)\times C$, respectively. Denote by $\F_1\subset \F_2=\mathcal{U}_{0|p} := \mathcal{U}_0\big|_{\Ps_0(r)\times\{p\}}$ and $\G_1\subset \G_2=\mathcal{U}_{1|p} := \mathcal{U}_1\big|_{\Ps_1(r)\times\{p\}}$ the corresponding universal flag bundles.  We then define cohomology classes $\x\in H^2(\Ps_0(r))$ and $\xtil\in H^2(\Ps_1(r))$ as 
\begin{multline}\label{eq:xxtil}
\x = c_1(\F_1\otimes \mathrm{det}(\mathcal{U}_{0|p})^{-1/r})\in H^2(\Ps_0(r))  \text{\;\; and \;\;} \\
 \xtil = c_1(\G_2/\G_1\otimes \mathrm{det}(\mathcal{U}_{1|p})^{-1/r})\in H^2(\Ps_1(r)).
 \end{multline}
These classes are independent of the choice of universal bundles $\mathcal{U}_0$ and $\mathcal{U}_1$.  Later (cf. Lemma \ref{lemma:Ucorrespond}), we will show that the class $\x$ pulls back to $\xtil$ under the  Hecke correspondence $\mathcal{H}\colon \Ps_1(r)\to\Ps_0(r)$. 

We summarize the statements of Propositions \ref{prop:bundle} and \ref{prop:H:10}  in the following diagram:
\begin{equation}\label{diagram:paps}
\begin{tikzcd}
\Ps_0(r) \arrow[d, "\pi"]  \arrow[drr, "h"{yshift=-11pt}] & \arrow[l, "\simeq"{yshift=1pt}, "\mathcal{H}"{yshift=10pt}] \Ps_1(r)  \arrow[r, phantom, sloped, "\simeq"] & \mathbf{P}(\widetilde{U}_p)  \arrow[d,"f"] \\
\Ms_0(r) & & \Ms_1(r).
\end{tikzcd}
\end{equation}
Thus, following \cite[Proposition 2.20]{AtiyahBott}, the cohomology ring $H^*(\Ps_0(r))$ is generated by the pullbacks of the classes $\atil_k, \btil_k^j, \ftil_k \in H^*(\Ms_1(r))$, for $2\leq k\leq r$ and  $1\leq j\leq 2g$, defined in \S\ref{S:genHM} via the K\"unneth decomposition of the Chern classes of the bundle $\widetilde{U}\to \Ms_1(r)\times C$, together with the class $(\mathcal{H}^{-1})^*\xtil$.

Now define
\begin{equation}\label{eq:defUbar}
\overline{U} \overset{\mathrm{def}}{=} \mathcal{U}_0\otimes \mathrm{det}(\mathcal{U}_0)^{-\frac{1}{r}}.
\end{equation}  We note that $\overline{U}$ is independent on the choice of the universal bundle $\mathcal{U}_0$.
The goal of this subsection is to give a geometric interpretation of the above cohomology classes on $\Ps_0(r)$ in terms of the K\"unneth components of the Chern classes of the bundle $\overline{U}$;  see Theorem \ref{Thm:genP0} for the result.

We begin by showing the following key  relation between the Chern characters of the bundles $\overline{U}$  and $\widetilde{U}$. 
 \begin{proposition}\label{Prop:chUtildeUbar} 
In the notation introduced above, we have 
$$ch(\overline{U}) = (h\times\mathrm{id})^*ch(\widetilde{U}) - \omega  \exp(({\mathcal{H}^{-1})^*\xtil})  + \frac{1}{r}\omega\, h^*ch(\widetilde{U}_p) , $$
where  $\omega\in H^2(C)$ denotes the fundamental class of the curve, and $\xtil \in H^2(\Ps_1(r))$ is the class defined in \eqref{eq:xxtil}. 
  \end{proposition}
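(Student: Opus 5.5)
The plan is to realize the Hecke correspondence of Proposition \ref{prop:H:10} at the level of universal families, and then read off the Chern character identity from a short exact sequence on $\Ps_1(r)\times C$. Throughout we identify $\Ps_0(r)$ with $\Ps_1(r)$ via $\mathcal{H}$, so that $(\mathcal{H}^{-1})^*\xtil$ becomes $\xtil$ and $h=f\circ\mathcal{H}^{-1}$ becomes $f$.

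First, by Proposition \ref{prop:bundle}(ii), the underlying bundle of a point of $\Ps_1(r)$ is stable. We may therefore take the universal bundle on $\Ps_1(r)\times C$ to be $\mathcal{U}_1=(f\times\mathrm{id})^*\mathcal{U}$, where $\mathcal{U}$ is a universal bundle on $\Ms_1(r)\times C$. Then $\mathcal{U}_1\otimes\det(\mathcal{U}_1)^{-1/r}=(f\times \mathrm{id})^*\widetilde U$.

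Second, the tautological Hecke correspondence sends $(W,F_{r-1}\subset W_p)$ to $V=\ker\big(W\to W_p/F_{r-1}\big)$. The new line $F_1\subset V_p$ is the image of $W(-p)_p\to V_p$; this is the kernel of $V_p\to W_p$. Performing this construction in families gives an exact sequence on $\Ps_1(r)\times C$:
\[
0\to \mathcal{U}_0\to \mathcal{U}_1\to \iota_*(\G_2/\G_1)\to 0,
\]
where $\iota\colon \Ps_1(r)\times\{p\}\hookrightarrow \Ps_1(r)\times C$ and $\mathcal{U}_0$ is a universal bundle for $\Ps_0(r)$ pulled back by $\mathcal{H}$. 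Since $\Ps_1(r)\times\{p\}$ is a divisor with trivial normal bundle, Grothendieck--Riemann--Roch gives $ch(\iota_*L)=\omega\, ch(L)$ exactly. Hence
\[
ch(\mathcal{U}_0)=ch(\mathcal{U}_1)-\omega\, e^{c_1(\G_2/\G_1)},\qquad c_1(\mathcal{U}_0)=c_1(\mathcal{U}_1)-\omega.
\]

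Third, we normalize. Write $ch(\overline U)=ch(\mathcal{U}_0)\,e^{-c_1(\mathcal{U}_0)/r}$ and substitute, using $e^{\omega/r}=1+\omega/r$ and $\omega^2=0$:
\[
ch(\overline U)=ch(\widetilde U)+\tfrac{1}{r}\,\omega\, ch(\widetilde U)-\omega\, e^{c_1(\G_2/\G_1)-c_1(\mathcal{U}_1)/r},
\]
with all classes pulled back. Multiplying by $\omega$ kills every Künneth component except the restriction to $\{p\}$. Thus $\omega\,ch(\widetilde U)=\omega\, ch(\widetilde U_p)$, and $\omega\,c_1(\mathcal{U}_1)=\omega\, c_1(\mathcal{U}_{1|p})=\omega\,c_1(\det\mathcal{U}_{1|p})$. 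The exponent therefore becomes $\xtil$ as defined in \eqref{eq:xxtil}, which yields the stated formula.

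The main obstacle is the second step: we must verify that the family version of the Hecke transform really produces a universal family for $\Ps_0(r)$ and that it carries the correct flag. This rests on the construction in \cite[\S7]{OTASz}. Universal bundles are only defined up to twisting by line bundles pulled back from the base, but this ambiguity is harmless: $\overline U$, $\widetilde U$ and $\xtil$ are all independent of such twists, as noted in the text. So it suffices to exhibit one compatible choice of universal bundles.
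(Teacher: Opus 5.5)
Your proposal is correct and is essentially the paper's proof. Both arguments rest on three ingredients: the family Hecke sequence $0\to\mathcal{U}_0\to\mathcal{U}_1\to\iota_*(\G_2/\G_1)\to 0$ on $\Ps_1(r)\times C$, the identity $ch(\iota_*L)=\omega\,ch(L)$, and the twist by $\det^{-1/r}$ using $\omega^2=0$ and $\omega\cdot X=\omega\cdot X|_{\Ps_1(r)\times\{p\}}$. The only difference is bookkeeping: the paper normalizes the universal bundles so that $\G_2/\G_1$ is trivial, whereas you keep $\mathcal{U}_1=(f\times\mathrm{id})^*\mathcal{U}$ and carry $c_1(\G_2/\G_1)$ into the exponent, where it combines directly into $\xtil$.
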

 \begin{proof}
 We introduce the notation $U_1$ for the universal bundle on 
$\Ps_1(r)\times C$, equipped with the universal flag
\[
    \G_1 \subset \G_2 = U_1\big|_{\Ps_1(r)\times\{p\}}\overset{\mathrm{def}}{=} U_{1|p},
\]
and normalized so that the quotient $\G_2/\G_1$ is a trivial line bundle.
Similarly, we denote by $U_0$ the universal bundle on 
$\Ps_0(r)\times C$,  with universal flag
\[
    \F_1 \subset \F_2 = U_0\big|_{\Ps_0(r)\times\{p\}}\overset{\mathrm{def}}{=} U_{0|p},
\]
normalized so that the subbundle 
$\F_1 \hookrightarrow  U_{0|p}$ is trivial. Note that since the expression \eqref{eq:defUbar}  is independent on the choice of the universal bundle over $\Ps_0(r)\times C$, we have 
\begin{equation*}\label{eq:trivialaboutUbar}
\overline{U} \simeq U_0\otimes\mathrm{det}(U_{0})^{-\frac{1}{r}}.
\end{equation*} The following Lemma is a simple observation (cf. \cite[\S7]{OTASz}). 
 \begin{lemma}\label{lemma:Ucorrespond}
Under the Hecke correspondence $\mathcal{H}$, the line bundle $\G_2/\G_1$  on $\Ps_1(r)$ corresponds to the line bundle $\F_1$ on $\Ps_0(r)$, and thus the normalized universal bundle $U_1$ corresponds to the universal bundle $U_0$. 
 \end{lemma}
Thus applying the operator $\mathcal{H}$ to the normalized universal bundle $U_1$
yields the following short exact sequence of the corresponding sheaves of sections. Note that here and in what follows, by a slight abuse of notation, we use the same symbols to denote both the vector bundles and their associated sheaves of sections.
 \begin{equation}\label{SESU} 0 \to (\mathcal{H}\times\mathrm{id})^*{U}_0\to {U}_1 \to  \mathcal{G}_2/\mathcal{G}_{1} \to 0. \end{equation}
Passing to the associated vector bundles gives the relation
\begin{equation}\label{chUch}
ch(U_1)=(\mathcal{H}\times\mathrm{id})^*ch(U_0)+\omega.
\end{equation} 

Next, tensoring the exact sequence \eqref{SESU} by $\mathrm{det}({U}_{1})^{-1/r} $, we obtain the short exact sequence
\begin{equation*} 0 \to (\mathcal{H}\times\mathrm{id})^*{U}_0\otimes \mathrm{det}({U}_{1})^{-1/r} \to {U}_1\otimes \mathrm{det}({U}_{1})^{-1/r} \to  \mathcal{G}_2/\mathcal{G}_{1}\otimes\mathrm{det}({U}_{1})^{-1/r} \to 0, \end{equation*}
which can be rewritten as 
\begin{equation}\label{SESUtilde}
0 \to (\mathcal{H}\times\mathrm{id})^*{U}_0\otimes \mathrm{det}({U}_{1})^{-1/r} \to (f\times\mathrm{id})^*\widetilde{{U}} \to  \mathcal{G}_2/\mathcal{G}_{1}\otimes\mathrm{det}({U}_{1})^{-1/r} \to 0. 
\end{equation}
Using equations \eqref{eq:xxtil}, \eqref{chUch} and the fact that $\omega\, ch({V}) =\omega\, ch({V}\big|_{X\times\{p\}})$ for any bundle $V$ over the product $X\times C$, we arrive at the following equality of the Chern characters:
\begin{multline}\label{eq:chtilde} 
(f\times\mathrm{id})^*ch(\widetilde{U}) - \omega\,\exp(\xtil)  \overset{\eqref{eq:xxtil}}{=} \\ (f\times\mathrm{id})^*ch(\widetilde{U}) - \omega\, ch(\mathrm{det}(U_{1|p})^{-1/r}) 
\overset{\eqref{SESUtilde}}{=} 
 (\mathcal{H}\times\mathrm{id})^*ch(U_0)\cdot ch(\mathrm{det}({U}_{1})^{-1/r})  = \\
(\mathcal{H}\times\mathrm{id})^*ch(U_0)\cdot \exp(-c_1({U}_{1})/r) = 
(\mathcal{H}\times\mathrm{id})^*ch(U_0)\cdot \exp(-c_1({U}_{1|p})/r -\omega/r) \overset{\eqref{chUch}}{=} \\
(\mathcal{H}\times\mathrm{id})^*ch(U_0)\cdot \exp(-c_1({U}_{0|p})/r -\omega/r) \overset{}{=} \\
(\mathcal{H}\times\mathrm{id})^*ch(U_0)\cdot  ch(\mathrm{det}({U}_{0})^{-1/r})(1-\omega/r) \overset{\eqref{chUch}}{=} 
(\mathcal{H}\times\mathrm{id})^*ch(\overline{U}) -\omega/r\cdot f^*ch(\widetilde{U}_p).
\end{multline}

Finally, pulling back both sides of \eqref{eq:chtilde} by $(\mathcal{H}^{-1}\times\mathrm{id})^*$, we arrive at the statement of Proposition \ref{Prop:chUtildeUbar}. 
\end{proof}

Our next goal is to relate the Chern classes of the bundles $\overline{U}$ and $\widetilde{U}$.  We begin by recalling and setting up the notation that we will use. 

\begin{itemize}[leftmargin=*, itemsep=0pt, topsep=2pt]
\item As above, we write $x_1,x_2,...,x_r$ for the standard coordinates on  $\mathbb{R}^r$, and set $V=\mathbb{R}^r/(1,...,1).$ For each index $i=1,2,...,r$, we denote $$\xb_i := x_i - \frac{1}{r}\sum_{j=1}^rx_j\in V^*.$$ 
\item Given any $\Sigma_r$-invariant polynomial $q$ on $V\otimes_\R\C$, we write 
\begin{equation}\label{q(U)def}
q(\overline{U})\in H^*(\Ps_0(r)\times{C}), \; q(\widetilde{U})\in H^*(\Ps_1(r)\times{C})
\end{equation}
 for the cohomology class obtained by evaluating $q(\xb_1,...,\xb_r)$ on the Chern roots of the bundles $\overline{U}$ and  $\widetilde{U}$, respectively. In particular, for the elementary symmetric polynomial $\tau_k$ of degree $k$, we have $$\tau_k(\widetilde{U}) = c_k(\widetilde{U}) \text{\;\; and \;\;} \tau_k(\overline{U}) = c_k(\overline{U}),$$ so $\tau_k$ recover the usual Chern classes of these bundles. 

\item We write $\mathbb{C}[\xb_1,...,\xb_r]^{\Sigma_r}$ for the ring of symmetric polynomials on $V\otimes_\R\C$,  and $\mathbb{C}[\xb_1,...,\xb_r]^{\Sigma_r}[\xb_1]$ for the ring of polynomials in the variable $\xb_1$ with coefficients in $\mathbb{C}[\xb_1,...,\xb_r]^{\Sigma_r}$.  
Under the natural identification of Chern roots, we have  $$\xb_1(\widetilde{U}_p) :=  \xb_1(\widetilde{U})\big|_{\Ps_1(r)\times\{p\}} = \xtil \text{\;\; (cf. \eqref{eq:xxtil})}.$$ 
\item Using this identification, for any polynomial $q\in\mathbb{C}[\xb_1,...,\xb_r]^{\Sigma_r}[\xb_1]$ we obtain a well-defined cohomology class 
$$q(\widetilde{U}_p) := q(\widetilde{U})\big|_{\Ps_1(r)\times\{p\}} \in H^*(\Ps_1(c)).$$
\end{itemize}
Later, we will use the following simple observation.
\begin{lemma}\label{l:derivx}
 Let  $q(\xb_1,...,\xb_r) \in \C[\xb_1,...,\xb_r]^{\Sigma_r}$. Let $\bb\in\Bases$ be a basis of $V^*$ (see \S\ref{S:NotationBases}),  and define ${s} := ((1-r)/r, 1/r,...,1/r)\in V^*$. 
Let $q_{\check{s}}$ denote the directional derivative of $q$, expressed in the variables corresponding to the basis $\bb$, in the direction $\check{s}_\bb$. 
The resulting polynomial $q_{\check{s}}$ is independent of the choice of $\bb\in\Bases$, and, moreover,
$$q_{\check{s}}  \in \C[\xb_1,...,\xb_r]^{\Sigma_r}[\xb_1].$$ 
\end{lemma}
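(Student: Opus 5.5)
The plan is to make the directional derivative completely explicit and observe that neither the basis nor the Hessian-type data from Theorem \ref{JKtheorem} actually enters. A basis $\bb\in\Bases$ induces the isomorphism $V^*\simeq V$, $\alpha\mapsto\check\alpha_\bb$, by declaring $\scp{\beta^{[i]}}{\check\alpha_\bb}$ to be the $i$-th coordinate of $\alpha$ in $\bb$. Equivalently, $\check{\alpha}_\bb$ is characterised by $\scp{\beta^{[i]}}{\check\alpha_\bb}=\mathrm{coeff}_{\beta^{[i]}}(\alpha)$. This depends on $\bb$ only through the change of coordinates, so the first step is to check that the vector $\check s_\bb\in V$ is in fact the same for all $\bb$.

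For this I would compute directly. Every root $\alpha^{ij}=x_i-x_j$, viewed as a function on $V$, is paired with the class in $V=\R^r/\R(1,\dots,1)$. Take the candidate vector $e:=-e_1 \bmod (1,\dots,1)\in V$, the image of minus the first standard basis vector. We have $\scp{\alpha^{ij}}{e}=-\delta_{1i}+\delta_{1j}$. I then need to show that the coordinates of $s=((1-r)/r,1/r,\dots,1/r)=-(e_1-\frac1r(1,\dots,1))$ in $\bb$ are exactly $\scp{\beta^{[k]}}{e}$.

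Here is where I expect the main subtlety: matching the identification $V^*\simeq V$ with a canonical one. In the paper's conventions (cf.\ Remark \ref{rem:iBer1}, where $y_i=\scp{\beta^{[i]}}{x}$ are coordinates and $Q_{\check a}$ is the derivative along $a$ written in the dual coordinates), the directional derivative $q_{\check s}$ of $q$ written in the $y$-variables should be $\sum_k s_k^{\bb}\,\partial q/\partial y_k$, where $s=\sum_k s_k^\bb\beta^{[k]}$. Since $\partial/\partial y_k$ is the vector dual to $y$, the resulting vector field is $\sum_k s^\bb_k \partial_{y_k}$, and it need not be basis independent in general. I would therefore verify independence concretely: $s$ and the roots live in the standard $\R^r$ with its Euclidean form. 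The key check is that for $s=-\xb$-type vectors, the operator $\sum_k s^\bb_k\partial_{y_k}$ applied to functions of $x$ equals $\partial_{x_1}$ up to sign, modulo the trivial direction. Writing $\partial_{y_k}=\sum_i (\partial x_i/\partial y_k)\partial_{x_i}$, I would verify that this holds for a Hamiltonian-type basis and for an arbitrary basis. Failing a clean general argument, I would reduce via Remark \ref{rem:iBer3}-style $\Sigma_r$-equivariance together with the fact that any two bases of roots differ by a unimodular change, and check that the dependence cancels.

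Once $q_{\check s}=\pm\partial q/\partial x_1$ (as a function on $V$, which makes sense because $q$ is translation invariant, being a function of the $\xb_i$) is established, the second claim follows. For symmetric $q$, $\partial_{x_1}q$ is invariant under the stabiliser $\Sigma_{r-1}$ of $1$. Hence it is a polynomial in the symmetric functions of $\xb_2,\dots,\xb_r$ and in $\xb_1$. Since $\sum\xb_i=0$, the elementary symmetric functions of $\xb_2,\dots,\xb_r$ are expressible recursively via $\tau_k(\xb)$ and $\xb_1$: from $\prod_i(t-\xb_i)=(t-\xb_1)\prod_{i\ge2}(t-\xb_i)$, one divides the power series. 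This places $q_{\check s}$ in $\C[\xb_1,\dots,\xb_r]^{\Sigma_r}[\xb_1]$.
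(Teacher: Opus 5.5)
Your last paragraph is fine: once $q_{\check s}=\pm\partial q/\partial x_1$ is known, $\Sigma_{r-1}$-invariance together with $\prod_{i\ge 2}(1+\xb_i t)=\prod_{i}(1+\xb_i t)/(1+\xb_1 t)$ puts it in $\C[\xb_1,\dots,\xb_r]^{\Sigma_r}[\xb_1]$. The gap is the first step. It is not merely an unfinished computation. You adopt the identification in which $\scp{\beta^{[i]}}{\check\alpha_\bb}$ is the $i$-th coordinate of $\alpha$ in $\bb$, so that $\check\beta^{[i]}=\partial/\partial y_i$. Under that reading the identity you propose to check fails for every basis, and the lemma itself is false.

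Take $r=3$ and $s=(-\tfrac23,\tfrac13,\tfrac13)$.
\begin{itemize}
\item For $\bb=(\alpha^{12},\alpha^{23})$ you get $s=-\tfrac23\alpha^{12}-\tfrac13\alpha^{23}$. But $\scp{\alpha^{12}}{-e_1}=-1$ and $\scp{\alpha^{23}}{-e_1}=0$.
\item For $\bb'=(\alpha^{12},\alpha^{13})$ you get $s=-\tfrac13(\alpha^{12}+\alpha^{13})$, against pairings $(-1,-1)$.
\end{itemize}
Concretely, modulo $\R(1,1,1)$ one has $\check s_{\bb}\equiv(-1,-\tfrac13,0)$ and $\check s_{\bb'}\equiv(-\tfrac13,0,0)$. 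For $q=\tau_2$ this gives $q_{\check s}=\tfrac19(5x_1-x_2-4x_3)$ with $\bb$, but $q_{\check s}=\tfrac13\xb_1$ with $\bb'$.

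The first answer is not even invariant under $x_2\leftrightarrow x_3$. Note that $\bb$ is the basis of Theorem~\ref{JKtheorem}, and reordering it to the Hamiltonian basis $\bb_{\mathrm{id}}=(\alpha^{23},\alpha^{12})$ does not change the dual-basis map. The second answer differs from $\pm\partial_{x_1}\tau_2=\mp\xb_1$ by the factor $r$. So no argument through unimodular changes of basis or $\Sigma_r$-equivariance can make the dependence cancel.

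A further sign that this reading is not the intended one: the linear part of $Q_{\check\beta^{[j]}}$ would be $\delta_2\,\partial\tau_2/\partial y_j$, for instance $-\tfrac{\delta_2}{3}(2y_1+y_2)$ for $\bb$. This is not a multiple of $\beta^{[j]}$.

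What is missing is the correct identification. For the lemma to hold, $a\mapsto\check a$ must come from the $\Sigma_r$-invariant form $\tau_2$ (the Killing form), normalized so that $Q_{\check\beta}=\pm\delta_2\beta+{}$(higher-order terms). This is also what the lemma's use requires. Comparing Proposition~\ref{Prop:chUtildeUbar} with the proof of Theorem~\ref{Thm:genP0} forces ${q_i}_{\check s}=\xb_1^{i-1}/(i-1)!-\tfrac1r q_{i-1}(\xb)$ for $q_i=\tfrac1{i!}\sum_j\xb_j^i$, and this is exactly $\partial q_i/\partial x_1$.

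With this identification the first step is immediate. Write $(\cdot,\cdot)$ for the standard inner product on $V^*\subset\R^r$. Then $\check s$ is characterized by $\scp{\alpha}{\check s}=\pm(\alpha,s)$ for all $\alpha\in V^*$, so independence of $\bb$ is automatic. Moreover $(\alpha^{ij},s)=s_i-s_j=\delta_{j1}-\delta_{i1}=\scp{\alpha^{ij}}{-e_1}$. This gives $\check s=\pm e_1$ modulo $\R(1,\dots,1)$ at once, without ever expanding $s$ in $\bb$. Your last paragraph then finishes the proof.
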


Armed with these preparatory results, we can now describe a set of generators for the cohomology ring  $H^*(\Ps_0(r))$ in terms of the universal classes on $\Ps_0(r)$. 

\begin{theorem}\label{Thm:genP0}
Let $\mathcal{U}$ be a universal bundle on $\Ps_0(r)\times C$ and let  $\F_1\subset \mathcal{U}\big|_{\Ps_0(r)\times\{p\}} :=\mathcal{U}_p$ be the corresponding flag bundle. Recall the notation $\x= c_1(\F_1 \otimes \mathrm{det}(\mathcal{U}_p)^{-1/r})\in H^2(\Ps_0(r))$  and  $\overline{U}=\mathcal{U} \otimes \mathrm{det}(\mathcal{U})^{-1/r}$.
For $2\le k\le r$ and $1\le j\le 2g$,
define the cohomology classes
$$
a_{k}\in H^{2k}(\Ps_0(r)), 
\qquad
b^{\,j}_{k}\in H^{2k-1}(\Ps_0(r)),
\qquad
f_{k}\in H^{2k-2}(\Ps_0(r))
$$ 
via the K\"unneth decomposition
$$c_k(\overline{U}) = a_k\otimes 1 + \sum_{j=1}^{2g}b_k^j\otimes e_j + f_k\otimes \omega\in H^*(\Ps_0(r))\otimes H^*(C),$$
where $\omega\in H^{2}(C)$ is the fundamental class of the curve and $\{e_{j}\}$ is a basis of $H^{1}(C)$, such that $e_ie_{i+g}=\omega$ for $1\leq 
i\leq g$, and all other intersection numbers $e_ie_j$ equal 0. 
\begin{enumerate}[label=(\roman*)]
\item Then for any $2\le k\le r$ and $1\le j\le 2g$,  we have:
\begin{equation}\label{eq:Thm:genP0}
a_k = h^*(\atil_k), \qquad b_k^j = h^*(\btil_k^j) \text{\;\; and \;\;} f_k = h^*(\ftil_k) - h^*({\tau_k}_{\check{s}}(\widetilde{U}_p)),
\end{equation}
 where $h\colon \Ps_0(r)\to \Ms_1(r)$ is the morphism defined in \eqref{diagram:paps}, $\tau_k\in \C[\xb_1,...,\xb_r]^{\Sigma_r}$ denotes the elementary symmetric polynomial of degree $k$, and ${\tau_k}_{\check{s}}(\widetilde{U}_p)$ is the cohomology class introduced prior to Lemma \ref{l:derivx}. 
\item The cohomology ring $H^*(\Ps_0(r))$ is generated by the classes $a_{k}, b^{\,j}_{k}, f_{k}$ and $\x$. 
\end{enumerate}
\end{theorem}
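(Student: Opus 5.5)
The plan is to derive everything from Proposition \ref{Prop:chUtildeUbar}, which expresses $ch(\overline{U})$ as the pullback $(h\times\mathrm{id})^*ch(\widetilde{U})$ corrected by terms proportional to $\omega$. Since $\omega^2=0$ and $\omega e_j=0$, the corrections only affect the $\omega$-component of the Künneth decomposition. For part (i) I will therefore pass from Chern characters to Chern classes, or more conveniently to arbitrary symmetric polynomials evaluated on Chern roots, and compare Künneth components.

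Concretely, write the Chern roots of $(h\times\mathrm{id})^*\widetilde{U}$ as $\xb_1,\dots,\xb_r$, with $\xb_1$ the root corresponding to the line bundle $\G_2/\G_1\otimes\det(\mathcal{U}_{1|p})^{-1/r}$, so that $\xb_1|_p=\xtil$. Proposition \ref{Prop:chUtildeUbar} says
\[
ch(\overline{U})=\sum_i e^{\xb_i}+\omega\Big(-e^{\xb_1}+\tfrac1r\sum_i e^{\xb_i}\Big).
\]
Because $\omega$ is nilpotent of order two, this is exactly $\sum_i e^{\xb_i+\omega s_i}$ with $s=((1-r)/r,1/r,\dots,1/r)$: indeed $e^{\xb_i+\omega s_i}=e^{\xb_i}(1+\omega s_i)$, and $\omega e^{\xb_i}=\omega e^{\xb_i|_p}$. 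Hence the Chern roots of $\overline{U}$ may be taken to be $\xb_i+s_i\omega$. For any symmetric $q$ we then get
\[
q(\overline{U})=q(\xb+\omega s)=q(\widetilde{U})+\omega\, q_{\check s}(\widetilde{U}_p),
\]
by Taylor expansion, with the derivative restricted to $p$ since it is multiplied by $\omega$. Lemma \ref{l:derivx} guarantees that $q_{\check s}$ is a well-defined element of $\C[\xb_1,\dots,\xb_r]^{\Sigma_r}[\xb_1]$, so $q_{\check s}(\widetilde{U}_p)$ makes sense. Taking $q=\tau_k$ and reading off the $1$, $e_j$ and $\omega$ components gives $a_k=h^*\atil_k$ and $b_k^j=h^*\btil_k^j$. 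The $\omega$-component gives $f_k=h^*\ftil_k+h^*({\tau_k}_{\check s}(\widetilde{U}_p))$ up to sign.

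The main obstacle I expect is this sign, together with the identification of the derivative $q_{\check s}$ (computed in a basis $\bb$) with the naive perturbation along $s$. The sign in \eqref{eq:Thm:genP0} is minus, so I must recheck the sign convention in Proposition \ref{Prop:chUtildeUbar} and whether the correct shift is $+\omega s$ or $-\omega s$. To settle it I will verify the case $k=1$ and rank $2$ directly from the exact sequence \eqref{SESU}, and then adjust the orientation of $\check s$ accordingly. I also need to check that the isomorphism $V^*\simeq V$ induced by $\bb$ sends $s$ to the vector whose directional derivative is the ordinary $\sum_i s_i\,\partial/\partial x_i$ on symmetric functions; this is essentially the content of Lemma \ref{l:derivx}.

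For part (ii), I will start from the paragraph after diagram \eqref{diagram:paps}: $H^*(\Ps_0(r))$ is generated by $h^*\atil_k$, $h^*\btil_k^j$, $h^*\ftil_k$ and $(\mathcal{H}^{-1})^*\xtil$. By Lemma \ref{lemma:Ucorrespond}, $(\mathcal{H}^{-1})^*\xtil=\x$. By part (i), $h^*\ftil_k=f_k\pm h^*({\tau_k}_{\check s}(\widetilde{U}_p))$. The correction term is a polynomial in $\xtil$ with coefficients that are symmetric in the Chern roots of $\widetilde{U}_p$, i.e. a polynomial in $\x$ and the $a_l=h^*\atil_l$. Hence each $h^*\ftil_k$ lies in the subring generated by $a_l$, $f_k$ and $\x$, which proves (ii).
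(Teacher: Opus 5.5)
Your argument is the paper's argument in different packaging. The paper reads off the power sums $q_i=ch_i$ from Proposition \ref{Prop:chUtildeUbar} and extends to all symmetric $q$ via the Leibniz rule. You instead note that, since $\omega^2=0$, the Chern roots of $\overline{U}$ are $\xb_i+s_i\omega$, and Taylor-expand. Both encode the fact that the $\omega$-correction is a derivation in $q$. Your part (ii) coincides with the paper's and does not depend on the sign. The weak point is that sign: you leave it open and propose to settle it by a check at $k=1$ followed by ``adjusting the orientation of $\check s$''. The $k=1$ check is vacuous, because $\sum_i s_i=0$ makes both signs give $c_1(\overline U)=0$. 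Tuning a convention until the displayed sign appears is not a proof.

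Your own computation already decides the sign. Set $\partial_s:=\sum_i s_i\,\partial/\partial x_i$. Since $s\equiv-(1,0,\dots,0)$ in $V$, we have $\partial_s q=-\partial_{x_1}q$, which makes the membership statement of Lemma \ref{l:derivx} immediate. You then get $c_k(\overline U)=c_k(\widetilde U)+\omega\,(\partial_s\tau_k)(\widetilde U_p)$, i.e.\ $f_k=h^*\ftil_k+h^*\big((\partial_s\tau_k)(\widetilde U_p)\big)$.

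The decisive test is $k=2$: $\partial_{x_i}\tau_2=-\xb_i$ gives $\partial_s\tau_2=\xb_1$, hence $f_2=h^*\ftil_2+\x$. This agrees with two independent sources. First, $ch_2=-c_2$ yields the same identity directly from Proposition \ref{Prop:chUtildeUbar}. Second, it is exactly the identity the paper itself uses in the proof of Proposition \ref{prop:ample} ($y=f^*\ftil_2+\xtil=\mathcal{H}^*f_2$).

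So the minus sign in \eqref{eq:Thm:genP0} is correct only if ${\tau_k}_{\check s}$ means $-\partial_s\tau_k=\partial_{x_1}\tau_k$. This happens, e.g., if $V^*\simeq V$ is induced by the negative definite form $\tau_2$. The paper's proof asserts the minus sign without addressing this, so your suspicion is justified.

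Also, identifying $q_{\check s}$ with $\partial_s q$ is not what Lemma \ref{l:derivx} says. Under the literal dual-basis reading of $\check s_\bb$, for $r=2$ one gets ${\tau_2}_{\check s}=\tfrac12\xb_1$. Fix the convention explicitly and state the formula with the sign it forces.
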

\begin{proof}
We introduce the notation $q_i(y_1,...,y_r)=\frac{1}{i!}(y_1^i+...+y_r^i)$; in particular, we have 
$q_i(\widetilde{U})=ch_i(\widetilde{U})$ and $q_i(\overline{U})=ch_i(\overline{U})$. It follows from 
Proposition \ref{Prop:chUtildeUbar} that 
$$q_i(\overline{U})=(h\times\mathrm{id})^*(q_i(\widetilde{U}))-\omega \, h^*({q_i}_{\check{s}}(\widetilde{U}_p)),$$ and thus
\begin{multline}\label{chderprod}
q_i(\overline{U})q_i(\overline{U}) =  
(h\times\mathrm{id})^*(q_i(\widetilde{U})q_i(\widetilde{U})- \omega \, ( {q_i}_{\check{s}}(\widetilde{U}_p)q_i(\widetilde{U})+ {q_j}_{\check{s}}(\widetilde{U}_p)q_i(\widetilde{U}))) = \\
(h\times\mathrm{id})^*(q_i(\widetilde{U})q_i(\widetilde{U})) - \omega \,  h^*((q_iq_j)_{\check{s}}(\widetilde{U}_p)).
\end{multline}
For the last equality, we used the fact that $\omega \, ch(\widetilde{U})=\omega \, ch(\widetilde{U}_p)$.

Since any polynomial $q\in\mathbb{C}[y_1,...,y_r]^{\Sigma_r}$ can be written as a polynomial in $q_i$'s, \eqref{chderprod} implies that for any symmetric polynomial $q$ on $V\otimes_\R\C$ we have:
$$q(\overline{U})=(h\times\mathrm{id})^*(q(\widetilde{U}))-\omega\, h^*(q_{\check{s}}(\widetilde{U}_p)).$$
Substituting $q=\tau_k$ for $k=2,3,...,r$, we arrive at the statement of  Theorem \ref{Thm:genP0}(i).

It follows from Lemma \ref{l:derivx} that the polynomial ${\tau_k}_{\check{s}}$ is an element of $\C[\xb_1,...,\xb_r]^{\Sigma_r}[\xb_1]$, and therefore the cohomology class $({\tau_k}_{\check{s}})(\widetilde{U}_p)$ can be expressed as a polynomial in the classes $$\atil_j = \tau_i(\widetilde{U}_p) = \tau_i(\widetilde{U})\big|_{\Ms_1(r)\times\{p\}}, \;\;\;  j=2,3,...,r,$$ and $\xtil = \xb_1(\widetilde{U}_p)$. 

By Lemma  \ref{lemma:Ucorrespond}, we have the identification $\x = ({\mathcal{H}^{-1})^*\xtil}$, and hence the cohomology ring  $H^*(\Ps_0(r))$, which is generated by the classes $$h^*(\atil_k),\, h^*(\btil^j_k),\, h^*(\ftil_k) \;\; (k= 2,...,r; \, j=1,...,2g) \text{\; and \;}  (\mathcal{H}^{-1})^*\xtil,$$  may equivalently be described as generated by the corresponding classes $a_k, b^j_k, f_k$, and $\x$.  This concludes the proof of Theorem \ref{Thm:genP0}
\end{proof}

\subsection{Integration over $\Ps_0(r)$}
Now  we are ready to state the main result of this section, which computes all the intersection pairings on the moduli space $\Ps_0(r)$. 
\begin{theorem}\label{Thm:intP0}
Let $\x, a_k, f_k, b^j_k\in H^*(\Ps_0(r))$ and  $\zeta^j_k\in H^1(T^{2g})$, for $2\leq k\leq r$ and $1\leq j\leq 2g$, denote the cohomology classes defined in \S\ref{S:genP0} and \S\ref{S:cohTorus}. Recall the notation  $Q(x)$ for the  symmetric polynomial on $V\otimes_\R\C$, expressed in terms of the elementary symmetric polynomials $\tau_k$ as 
\begin{equation}\label{def:Q}
Q(x) = \sum_{k=2}^r\delta_k\tau_k(x),
\end{equation}
where $\delta_k$ are formal nilpotent parameters, such that $\delta_2\neq 0$.  Define ${c}=(1/r,...,1/r, 1/r-1)\in V^*$; 
then we have 
\begin{multline}\label{eq:ourint}
\int_{\Ps_0(r)}\exp(\delta_2f_2+\delta_3f_3+...+\delta_rf_r)\x^m \prod_{k=2}^ra_k^{m_k} \prod_{j_k=1}^{2g}(b_k^{j_k})^{l_{k,j_k}} =  \\
\frac{(-1)^{{r \choose 2}(g-1)}}{(r-1)!} \sum_{\bb\in \HH_n} 
\iber_{\mathbf{B},Q} \Biggl[ \frac{\big(\frac{1}{r}\sum_{j=1}^r(x_r-x_j)\big)^m \prod_{k=2}^r\tau_k(x)^{m_k}}{\prod_{i=1}^{r-1}(x_r-x_i)\prod_{i<j}(x_i-x_j)^{2g-2}\, \mathrm{det}(\mathrm{Hess}_\bb(Q))}  \\
\int_{T^{2g}}\exp\left(-\sum_{a,b}\sum_{j=1}^g\zeta^j_a\zeta^{j+g}_b Q_{\check{u}_a\,\check{u}_b} \right) 
\prod_{k=2}^r\prod_{j_k=1}^{2g}\left(\sum_a \zeta_a^{j_k}\tau_{k_{\check{u}_a}}\right)^{l_{k,j_k}}\Biggr](-[c]_\bb),
\end{multline} 
where the first sum runs over the elements $\bb$ of the Hamiltonian basis $\HH_n$ for any $n\in\{1,2,...,r\}$ (see notation on page \pageref{not:Ham}), $\mathrm{Hess}_\bb(Q)$ denotes the Hessian matrix:  $\mathrm{Hess}_\bb(Q)_{i j} = \frac{\partial}{\partial \beta^{[i]}} Q_{\check{\beta}^{[j]}_\bb},$ and $(\check{u}_1, \check{u}_2,..., \check{u}_{r-1})$ is an orthonormal basis of $V$. 
\end{theorem}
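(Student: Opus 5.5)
Via the isomorphism $\mathcal{H}$ of Proposition \ref{prop:H:10} and Proposition \ref{prop:bundle}(ii), I would identify $\Ps_0(r)$ with the projective bundle $f\colon\mathbf{P}(\widetilde U_p)\to\Ms_1(r)$, so that $h=f\circ\mathcal{H}^{-1}$. Then I would push the integrand forward along $f$ and apply Theorem \ref{ThmFKtoHam} on $\Ms_1(r)$. By Lemma \ref{lemma:Ucorrespond}, $(\mathcal{H}^{-1})^*\xtil=\x$, and $\xtil$ is the first Chern class of the tautological quotient line of $\widetilde U_p$, i.e. the Chern root $\xb_1(\widetilde U_p)$. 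By Theorem \ref{Thm:genP0}(i), the classes $a_k,b^j_k$ are pullbacks, while $f_k=h^*\ftil_k-h^*({\tau_k}_{\check s}(\widetilde U_p))$. The integrand therefore becomes
\[
h^*\Bigl(\exp\bigl(\textstyle\sum_k\delta_k\ftil_k\bigr)\prod_k\atil_k^{m_k}\prod(\btil_k^{j_k})^{l_{k,j_k}}\Bigr)\cdot \exp\bigl(-Q_{\check s}(\widetilde U_p)\bigr)\,\xtil^m ,
\]
where the second factor is a polynomial in $\xtil$ with coefficients symmetric in the Chern roots (Lemma \ref{l:derivx}).

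Next I would apply the projective bundle pushforward in Chern-root form. For a class $P(\xb_1)$ with symmetric coefficients,
\[
f_*P(\xtil)=\sum_{i=1}^r\frac{P(\xb_i)}{\prod_{j\neq i}(\xb_i-\xb_j)}\Big|_{\widetilde U_p}
\]
(up to the sign convention for quotients versus lines, which must be fixed carefully). This symmetric expression equals $r\cdot\mathrm{Sym}\bigl[P(\xb_r)/\prod_{j<r}(x_r-x_j)\bigr]/r$, i.e. $r$ times the average over $\Sigma_r$. The right-hand side of Theorem \ref{ThmFKtoHam} is then used with symmetric insertions $\tau_k(x)^{m_k}$ replaced by this symmetrized rational function. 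Since the JK formula is linear in the polynomial insertion and the Hamiltonian sum is $\Sigma_r$-equivariant (Remark \ref{rem:iBer3}), I expect to be able to replace the symmetrization by a single representative with $i=r$ and absorb the factor $r$. This converts $1/r!$ into $1/(r-1)!$ and produces the denominator $\prod_{i=1}^{r-1}(x_r-x_i)$ together with $\xb_r^m=\bigl(\frac1r\sum_j(x_r-x_j)\bigr)^m$.

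Finally, the correction factor $\exp(-Q_{\check s})$ evaluated at $\xb_r$ must be combined with $\exp(Q_{\check c_\bb})$ from \eqref{eq:rephrJK}. Here $s$ placed in the $r$-th slot is $(1/r,\dots,1/r,1/r-1)=c$, so $\exp(Q_{\check c})\exp(-Q_{\check c})=1$ formally. Note also that $[c]_\bb$ and its shift by a lattice vector are related via Remark \ref{rem:iBer2}. This cancellation explains why no exponential of $Q$ appears in \eqref{eq:ourint}, with the evaluation point staying $-[c]_\bb$.

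\textbf{The main obstacle} is justifying that the symmetrization can be collapsed inside the JK sum. The insertion $P(\xb_r)/\prod_{j<r}(x_r-x_j)$ is not symmetric and introduces new poles on root hyperplanes. One must check that \eqref{eq:rephrJK} remains valid for such insertions, or that the sum over $\HH_n$ with the shift $[c]_\bb$ commutes with $\Sigma_{r-1}$-averaging. It is also necessary to track the bookkeeping of the $c$ versus $s$ index conventions. I would first try to verify this using the independence of $n$ in Theorem \ref{ThmFKtoHam}, choosing $n=r$, and Remark \ref{rem:iBer3} to permute terms.
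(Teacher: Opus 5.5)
Your route is the paper's. You rewrite the integrand on $\Ps_1(r)$ via Theorem \ref{Thm:genP0} and push forward along $f$ by the Chern-root formula (the paper's Proposition \ref{prop:intPtoM}; there is no sign issue, since $\xtil$ is $c_1$ of the normalized tautological \emph{quotient} line). You then apply Theorem \ref{ThmFKtoHam} and merge $\exp(Q_{\check c})$ with the correction factor of the $i$-th summand into $\exp(Q_{\check\alpha^{ir}})$; for $i=r$ this is your cancellation. That factor is absorbed by the lattice shift of Remark \ref{rem:iBer2}, and you permute by $\sigma_i=(i\,r)$ using Remark \ref{rem:iBer3}.

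Half of the obstacle you flag is not a real problem: you never need \eqref{eq:rephrJK} for non-symmetric insertions. The pushed-forward class $\sum_i(\cdots)/\prod_{j\neq i}(\xb_i-\xb_j)$ is a symmetric polynomial in the Chern roots, i.e. a polynomial in the $\atil_k$. So Theorem \ref{ThmFKtoHam} applies to it as it stands. Only afterwards do you split it into its $r$ summands, by linearity of $\iber_{\bb,Q}$, which is defined on all meromorphic functions with poles on the root hyperplanes.

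The other half is a genuine gap, and the tools you propose cannot close it. Let $G$ be the function inside $\iber_{\bb,Q}[\,\cdot\,]$ on the right of \eqref{eq:ourint}, and put $A_k=\sum_{\bb\in\HH_k}\iber_{\bb,Q}[G](-[c]_\bb)$. After the shift and the permutation, the $i$-th summand of the $\HH_n$-sum becomes $A_{\sigma_i(n)}$ --- a sum over $\HH_{\sigma_i(n)}$, not over $\HH_n$. Since $G$ and $c$ are invariant under $\mathrm{Stab}_r\simeq\Sigma_{r-1}$, Remark \ref{rem:iBer3} gives $A_1=\dots=A_{r-1}=:A$. Hence, for every $n$, your computation yields the left side of \eqref{eq:ourint} as $\frac{(-1)^{\binom r2(g-1)}}{r!}\sum_{k=1}^rA_k=\frac{(-1)^{\binom r2(g-1)}}{r!}\bigl((r-1)A+A_r\bigr)$.

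In particular, the independence of $n$ in Theorem \ref{ThmFKtoHam}, applied to the insertion you actually have, holds tautologically and says nothing about $A$ versus $A_r$. Yet, given the rest of your computation, \eqref{eq:ourint} is equivalent to $A=A_r$, even for a single $n$. The missing ingredient is the independence of Hamiltonian-basis sums for the non-symmetric integrand $G$, with its extra poles $1/\prod_{j<r}(x_r-x_j)$. This is a genuine residue theorem, which the paper imports from \cite{Szimrn} (cf. \cite[Theorem 4.4]{OTASz}) for arbitrary meromorphic functions with poles on the root hyperplanes. Without it you only obtain the fully symmetrized formula above, not the stated $\frac{1}{(r-1)!}$-formula over a single $\HH_n$.
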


\begin{proof}
First, we rewrite the left-hand side of \eqref{eq:ourint} as an integral over the moduli space of degree-1 parabolic bundles $\Ps_1(r)$. 
Applying Theorem \ref{Thm:genP0}, we obtain
\begin{multline}\label{eq:intP01}
\int_{\Ps_0(r)}\exp(\delta_2f_2+\delta_3f_3+...+\delta_rf_r) \x^m \prod_{k=2}^ra_k^{m_k} \prod_{j_k=1}^{2g}(b_k^{j_k})^{l_{k,j_k}} =  \\ 
\int_{\Ps_1(r)}\exp(\delta_2\ftil_2+\delta_3\ftil_3+...+\delta_r\ftil_r) \prod_{k=2}^r\atil_k^{m_k} \prod_{j_k=1}^{2g}(\btil_k^{j_k})^{l_{k,j_k}} \exp(-Q_{\check{s}}(\widetilde{U}_p)) \xtil^m,
\end{multline}
where ${s} = ((1-r)/r, 1/r,...,1/r)\in V^*$ and as above,  $Q_{\check{s}}(\widetilde{U}_p)$ denotes the cohomology class defined prior to Lemma \ref{l:derivx}. 

To perform the integral over the moduli space $\Ps_1(r)$, we will use the following proposition.
\begin{proposition}\label{prop:intPtoM}
For any  $q\in \C[\xb_1,...,\xb_r]^{\Sigma_r}[\xb_1]$ and $\alpha\in H^*(\Ms_1(r))$, we have:
\begin{equation}\label{eq:intPtoM}
\int_{\Ps_1(r)}f^*(\alpha)  q(\widetilde{U}_p) =\int_{\Ms_1(r)} \alpha \, \left( \sum_{\sigma\in \Sigma_r/\mathrm{Stab}_{1}} \frac{\sigma\cdot q}{\prod_{j\neq \sigma(1)} (x_{\sigma(1)}-x_j)}\right)\left(\widetilde{U}_p\right)\!,
\end{equation}
where $f\colon \Ps_1(r)\to \Ms_1(r)$ is the projective bundle map (see Proposition \ref{prop:bundle}), and $$\left(\sum_{\sigma\in \Sigma_r/\mathrm{Stab}_{1}} \frac{\sigma\cdot q}{\prod_{j\neq \sigma(1)} (x_{\sigma(1)}-x_j)}\right)\left(\widetilde{U}_p\right) \in H^*(\Ps_1(r))$$ is the cohomology class defined in \eqref{q(U)def}.
\end{proposition}
\begin{proof}
Note that for any $q\in \C[\xb_1,...,\xb_r]^{\Sigma_r}$ the cohomology class $q(\widetilde{U}_p)$ belongs to $H^*(\Ms_1(r))$, hence to prove Proposition \ref{prop:intPtoM} it is enough to show that
\begin{multline}\label{eq:sym1}
\int_{\Ps_1(r)}f^*(\alpha)  \xb_1^k(\widetilde{U}_p) =  \int_{\Ms_1(r)}  \alpha\, \left(\sum_{\sigma\in \Sigma_r/\mathrm{Stab}_{1}} \frac{\sigma\cdot\xb_1^k}{\prod_{j\neq \sigma(1)} (x_{\sigma(1)}-x_j)}\right)\left(\widetilde{U}_p\right) = \\ 
  \int_{\Ms_1(r)} \alpha \, \left(\sum_{i=1}^r\frac{\xb_i^k}{\prod_{j\neq i} (\xb_{i}-\xb_j)}\right) \left(\widetilde{U}_p\right)
\end{multline}
for any $k\geq 0$.

We start by simplifying expression on the right-hand side of \eqref{eq:sym1}. 
\begin{lemma}\label{lem:H}
 Denote by $h_m(x_1,...,x_r)$ the complete homogeneous symmetric polynomial of degree $m$. Then assuming that $h_{0}(x_1,...,x_r) = 1$ and $h_{m}(x_1,...,x_r) = 0$ for $m<0$, we have 
 $$ h_{m-r+1}(x_1,...,x_r) = \sum_{i=1}^r \frac{x_i^m}{\prod_{j\neq i }(x_i-x_j)} \text{\;\; for any \;\;} m\geq 0.$$
\end{lemma}
\begin{proof}
Consider the generating function for the complete homogeneous symmetric polynomials:
\begin{equation}\label{eq:H1}
H(t) = \prod_{i=1}^r\frac{1}{1-x_it} = \sum_{k\geq 0}h_k(x_1,...,x_r)t^k.
\end{equation}
Note that the rational function $H(t)= \prod_{i=1}^r\frac{1}{(1-x_it)}$ can be written as 
$$H(t) = \sum_{i=1}^r \frac{C_i}{1-x_it},$$ where the constants $C_i$ are determined by the partial fraction decomposition:
$$C_i = \mathrm{lim}_{t\to1/x_i}H(t)(1-x_it) = \frac{1}{\prod_{j\neq i}(1-x_j/x_i)} = \frac{x_i^{r-1}}{\prod_{j\neq i}(x_i-x_j)}.$$
Substituting $C_i$ back into the partial fraction form of $H(t)$, we obtain that
\begin{equation}\label{eq:H2}
H(t) = \sum_{i=1}^r  \frac{x_i^{r-1}}{\prod_{j\neq i}(x_i-x_j)} \frac{1}{1-x_it}  = \sum_{k\geq 0}  \sum_{i=1}^r \frac{x_i^{r-1+k}}{\prod_{j\neq i}(x_i-x_j)}t^k.
\end{equation}
Now comparing the coefficients of $t^k$ in the expressions \eqref{eq:H1} and \eqref{eq:H2} for $H(t)$, we arrive at 
$$h_k(x_1,...,x_r) =  \sum_{i=1}^r \frac{x_i^{r-1+k}}{\prod_{j\neq i}(x_i-x_j)} \text{\;\; for any \;\;} k\in \mathbb{Z}.$$ 
Then substituting $ k=m-r+1$, we arrive at the statement of Lemma \ref{lem:H}. 
\end{proof}
Recall that in \S\ref{S:genP0}  we identified $\xb_1(\widetilde{U}_p)$ with the cohomology class $\xtil\in H^2(\Ps_1(r))$, defined in \eqref{eq:xxtil}. Thus it follows from  Lemma \ref{lem:H} that to prove \eqref{eq:sym1} it is enough to show the equality: 
 \begin{equation}\label{eq:sym2} 
\int_{\Ps_1(r)}f^*(\alpha)  \xtil^k(\widetilde{U}_p)  =   \int_{\Ms_1(r)} \alpha \, h_{k-r+1}(\widetilde{U}_p) 
\text{\;\; for\;\;} k\geq 0.
\end{equation}

Since $\Ps_1(r) \simeq \mathbb{P}(\widetilde{U}^*_p)$ (see  Remark \ref{rem:P1iso}) and $\xtil=c_1(\mathcal{O}_{\mathbb{P}(\widetilde{U}^*_p)}(1))$ (cf. \eqref{eq:xxtil}), equality  \eqref{eq:sym2}  follows from  \cite[Lemma 1]{MR623355}; this concludes the proof of Proposition \ref{prop:intPtoM}.
\end{proof}

Now using Proposition \ref{prop:intPtoM}, we rewrite \eqref{eq:intP01} as 
\begin{multline*}
 \int_{\Ms_1(r)}\exp(\delta_2\ftil_2+\delta_3\ftil_3+...+\delta_r\ftil_r) \prod_{k=2}^r\atil_k^{m_k} \prod_{j_k=1}^{2g}(\btil_k^{j_k})^{l_{k,j_k}}  \sum_{\sigma} \frac{\exp(-Q_{\sigma \check{s}}(\widetilde{U}_p)) (\sigma \xtil)^m}{\prod_{j\neq \sigma(1)} (x_{\sigma(1)}-x_j)},
 \end{multline*}
 where the sum runs over the elements $\sigma\in \Sigma_r/\mathrm{Stab}_{1}$.
According to Theorem \ref{ThmFKtoHam},  this integral is equal to 
 \begin{equation}\label{eq:int1}
\sum_{\bb\in\HH_r} 
\iber_{\mathbf{B},Q}\! \Biggl[\! \mathcal{T}(x) \sum_{i=1}^r \frac{\exp(Q_{\check{\alpha}^{i r}})\big(\frac{1}{r}\sum_{j=1}^r(x_i-x_j)\big)^m}{\prod_{j,  j\neq i}(x_i-x_j)}\!\Biggr](-[(1/r,...,1/r, 1/r-1)]_\bb),
\end{equation} 
where 
\begin{multline*}
\mathcal{T}(x) = \frac{(-1)^{{r \choose 2}(g-1)}}{r!} 
 \frac{ \prod_{k=2}^r\tau_k(x)^{m_k}}{\prod_{i<j}(x_i-x_j)^{2g-2}\, \mathrm{det}(\mathrm{Hess}_\bb(Q))}  \\
\int_{T^{2g}}\exp\left(-\sum_{a,b}\sum_{j=1}^g\zeta^j_a\zeta^{j+g}_b Q_{\check{u}_a\,\check{u}_b} \right) 
\prod_{k=2}^r\prod_{j_k=1}^{2g}\left(\sum_a \zeta_a^{j_k}\tau_{k_{\check{u}_a}}\right)^{l_{k,j_k}}.
\end{multline*}

Now using Remark \ref{rem:iBer2} with $v=x_i-x_r$, $i=1,2,...,r-1$, we rewrite \eqref{eq:int1} as
 \begin{equation}\label{eq:int2}
  \sum_{i=1}^r
 \sum_{\bb\in\HH_r} \iber_{\mathbf{B},Q} \Biggl[ \mathcal{T}(x) \frac{\big(\frac{1}{r}\sum_{j=1}^r(x_i-x_j)\big)^m}{\prod_{j,  j\neq i}(x_i-x_j)}\Biggr]\Big(-\Big[\sum_{j=1}^r\frac{x_j}{r}-x_i\Big]_\bb\Big).
\end{equation} 

Introducing the notation $\sigma_i$ for  the permutation $(i r)\in \Sigma_r$, we observe that 
 \begin{multline*}\label{eq:int3}
 \sum_{\bb\in\HH_r} \iber_{\mathbf{B},Q} \Biggl[ \mathcal{T}(x) \frac{\big(\frac{1}{r}\sum_{j=1}^r(x_i-x_j)\big)^m}{\prod_{j,  j\neq i}(x_i-x_j)}\Biggr]\Big(-\Big[\sum_{j=1}^r\frac{x_j}{r}-x_i\Big]_\bb\Big) {=} \\
  \sum_{\bb\in\HH_i} \iber_{\sigma_i\mathbf{B},Q} \Biggl[ \mathcal{T}(x) \frac{\sigma_i\cdot\big(\frac{1}{r}\sum_{j=1}^r(x_r-x_j)\big)^m}{\sigma_i\cdot \prod_{j,  j\neq r}(x_r-x_j)}\Biggr](-[\sigma_i\cdot c]_{\sigma_i\bb}) \overset{\ref{rem:iBer3}}{=} \\
   \sum_{\bb\in\HH_i} \iber_{\mathbf{B},Q} \Biggl[ \mathcal{T}(x) \frac{\big(\frac{1}{r}\sum_{j=1}^r(x_r-x_j)\big)^m}{ \prod_{j,  j\neq r}(x_r-x_j)}\Biggr](-[ c]_{\bb}),
\end{multline*} 
where for the last equality we used Remark \ref{rem:iBer3}. 
Finally, using the independence of the sum
$$ \sum_{\bb\in\HH_i} \iber_{\mathbf{B},Q} \Biggl[ \mathcal{T}(x) \frac{\big(\frac{1}{r}\sum_{j=1}^r(x_r-x_j)\big)^m}{ \prod_{j,  j\neq r}(x_r-x_j)}\Biggr](-[ c]_{\bb})$$
 on the choice of the Hamiltonian basis $\HH_i$ \cite{Szimrn} (cf. also \cite[Theorem 4.4]{OTASz}), we arrive at the expression \eqref{eq:ourint}, which concludes the proof of Theorem \ref{Thm:intP0}. 
\end{proof}

\section{Our main result and its relation to Kiem's work}\label{S:6}
In this final section, we present our main result: explicit formulas for the  Poincar\'e--Verdier  pairing on $IH^*(\Ms_0(r))$  (see Theorem \ref{Thm:mainresult}). We also compare these formulas, in the rank-2 case, with Kiem’s computation of the intersection pairing on $IH^*(\Ms_0(2))$.
\subsection{Ample cohomology classes}
As discussed in  \S\ref{S:CanEmbedding} and \S\ref{S:PVpairing}, in order to deduce explicit formulas for the  Poincar\'e--Verdier pairing on $IH^*(\Ms_0(r))$ from the intersection numbers on $H^*(\Ps_0(r))$, 
it is necessary to identify (relatively) ample cohomology classes on the moduli spaces $\Ms_0(r)$ and $\Ps_0(r)$. In the following proposition, we carry out this identification.

\begin{proposition}\label{prop:ample}
Let $f_2, \x\in H^2(\Ps_0(r))$ be the classes introduced in \S\ref{S:genP0}, and let $\pi:\Ps_0(r)\to \Ms_0(r)$ be the map defined in Proposition \ref{prop:bundle}.  There exist an ample line bundle   $\mathcal{L}_0$ on $\Ms_0(r)$ and a $\pi$-ample line bundle $\mathcal{L}$ on $\Ps_0(r)$, such that
$$ f_2 = c_1(\pi^*\mathcal{L}_0), \;\; -\x=c_1(\mathcal{L}),$$
and  $\mathrm{deg}(\mathcal{L}\big|_{\pi^{-1}(V)}) = 1$ for every stable $V\in \Ms_0(r)$.
\end{proposition}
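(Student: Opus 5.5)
The proof splits into two independent parts: the statement about $f_2$ on the base, and the statement about $\x$ along the fibers of $\pi$.

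\textbf{The class $f_2$.} By Theorem \ref{Thm:genP0}, $f_2$ is the $\omega$-Künneth component of $c_2(\overline{U})$, where $\overline{U}=\mathcal{U}\otimes\det(\mathcal{U})^{-1/r}$ is the normalized universal bundle. I plan to identify $f_2$, via the Grothendieck--Riemann--Roch theorem, with the first Chern class of a determinant-of-cohomology line bundle. Concretely, I would take
\[
\Theta=\det R\mathrm{pr}_*\bigl(\mathcal{E}nd_0\,\mathcal{U}\bigr),
\]
the determinant of cohomology of the traceless endomorphism bundle. It is independent of the choice of universal bundle, so it is well defined on $\Ps_0(r)$.

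GRR expresses $c_1(\Theta)$ through the $\omega$-component of $ch_2(\mathcal{E}nd_0\,\overline{U})$, which is $2r\,c_2(\overline{U})$ up to sign. The $e_j$-terms contribute nothing to this component, because $\overline{U}$ has $c_1=0$. Hence $f_2$ is a positive rational multiple of $\pm c_1(\Theta)$.

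To push this down to $\Ms_0(r)$, I would use the known descent results for the moduli space of semistable bundles with fixed determinant: the determinant line bundle of $\mathcal{E}nd_0$ (equivalently, a power of the generalized theta divisor) descends to an ample line bundle on the good quotient $\Ms_0(r)$, by Drezet--Narasimhan. Over $\Ps_0(r)$ this is the pullback along $\pi$, because $\Theta$ depends only on the underlying bundle, not on the flag.

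This gives $f_2=c_1(\pi^*\mathcal{L}_0)$ for a rational (``$\mathbb{Q}$-'') line bundle $\mathcal{L}_0$. Any rational scaling is harmless, since ampleness and the perverse filtration of Definition \ref{def:pervfiltr} only need the class up to a positive multiple. Fixing the sign conventions so that the positive multiple really comes out positive is a bookkeeping step that I would check against the case $r=2$.

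\textbf{The class $\x$.} Here I would work with $\mathcal{L}=(\F_1\otimes\det(\mathcal{U}_p)^{-1/r})^{-1}$. Restrict it to a fiber $\pi^{-1}(V)\simeq\mathbb{P}(V_p)$ over a stable $V$, which is described in Theorem \ref{thm:dtpi}(i).

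On this fiber the universal bundle restricted to the point $p$ is a trivial bundle $V_p$, twisted by a line bundle. The normalization by $\det^{-1/r}$ removes that twist. Consequently $\F_1\otimes\det(\mathcal{U}_p)^{-1/r}$ restricts to the tautological subbundle $\mathcal{O}(-1)$, and $\mathcal{L}$ restricts to $\mathcal{O}(1)$, which has degree $1$.

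For $\pi$-ampleness I must show that $\mathcal{L}$ is positive on every fiber, including the singular iterated projective towers over the strata $M^{\mathrm{ab}}_\rho$. I would argue by the parabolic GIT construction. For small parabolic weight $c_1=1/2r$, the space $\Ps_0(r)$ is obtained from the parameter scheme $\mathbb{P}(\mathcal{V}_p)$, taken over the semistable locus, as a GIT quotient with linearization $\Theta^N\otimes\mathcal{O}(1)$.

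The relative polarization over $\Ms_0(r)$ is then induced by $\mathcal{O}(1)$ on the flag. This descends to $\mathcal{L}$, or to a positive power of it. Relative ampleness follows because a line bundle that is ample on the GIT quotient, twisted by a pullback from the base, is relatively ample.

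\textbf{Main obstacle.} I expect the hardest step to be this $\pi$-ampleness over the non-stable strata, where the fiber is not simply $\mathbb{P}(V_p)$. If the GIT argument becomes delicate, I would fall back on the numerical criterion: check $\deg\mathcal{L}|_\gamma>0$ for curves $\gamma$ in the projective-bundle towers, using that every such curve lies in a fiber of a projectivization of the form $\mathbb{P}(\text{Hom or Ext space})$, on which $\F_1$ is again tautological.
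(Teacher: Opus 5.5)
Your proof is correct in outline. For the class $f_2$ it takes a genuinely different route from the paper.

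\textbf{The class $f_2$.} The paper works through the Hecke isomorphism $\mathcal{H}\colon\Ps_1(r)\to\Ps_0(r)$ and the projective bundle $\Ps_1(r)\simeq\mathbf{P}(U_p)\to\Ms_1(r)$. Repeating a rank-two lemma, it identifies the tautological quotient $\mathcal{O}(1)$ with $(\pi\circ\mathcal{H})^*\mathcal{L}_0$, where $\mathcal{L}_0$ is the ample generator. The tautological sequence then gives $c_1(\mathcal{O}(1))=f^*\ftil_2+\xtil=\mathcal{H}^*f_2$.

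You instead stay on $\Ps_0(r)$ and combine Grothendieck--Riemann--Roch for $\mathcal{E}nd_0\,\mathcal{U}$ with Drezet--Narasimhan descent. This is intrinsic, uniform in $r$, and avoids extending a rank-two argument. The paper in fact uses your GRR identity, $2rf_2=\mathrm{pr}_*c_2(\mathrm{End}\,\mathcal{U}_0)$, in its second step. The paper's route, in turn, produces the generator directly inside the Hecke framework it later uses for the integrals.

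Two loose ends in your version close immediately:
\begin{enumerate}
\item The sign is already fixed by GRR. You get $c_1(\Theta)=-2rf_2$, so $f_2=\frac{1}{2r}c_1(\Theta^{-1})$, and $\Theta^{-1}$ is the ample bundle (it is anticanonical on the stable locus). No check at $r=2$ is needed.
\item You need not settle for a $\mathbb{Q}$-line bundle, which would fall short of the statement. By Drezet--Narasimhan, $\mathrm{Pic}(\Ms_0(r))=\mathbb{Z}\mathcal{L}_0$ and $K_{\Ms_0(r)}=\mathcal{L}_0^{-2r}$. Hence the descended $\Theta^{-1}$ equals $\mathcal{L}_0^{2r}$, and $f_2=c_1(\pi^*\mathcal{L}_0)$ exactly.
\end{enumerate}

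\textbf{Relative ampleness and the fiber degree.} Both proofs use the same principle: an ample class minus a class pulled back from $\Ms_0(r)$ is $\pi$-ample. Your degree-one computation on $\pi^{-1}(V)\simeq\mathbb{P}(V_p)$ is the paper's.

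The paper takes $f_2-\x$ itself as the ample class, citing \cite{OTASz}. Your relative-GIT formulation only claims that some class $Nf_2-k\x$ with $N\gg k>0$ is ample, and it is actually the safer one. The paper's own identities $\mathcal{H}^*f_2=f^*\ftil_2+\xtil$ and $\mathcal{H}^*\x=\xtil$ give $f_2-\x=h^*\ftil_2$. This class is trivial on the $\mathbb{P}^{r-1}$-fibers of $h$, so it is nef but not ample.

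An ample combination is $2f_2-\x=f_2+h^*\ftil_2$. It is ample because $\pi\times h$ is finite: $f_2$ has degree one on the $h$-fibers, so no curve is contracted by both maps. So the conclusion that $-\x$ is $\pi$-ample still holds.

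\textbf{Your fallback.} It is not needed, and it would not work as stated. Strict positivity on curves does not imply ampleness. Also, a curve in an iterated projective-bundle tower need not lie in a fiber of a single stage.
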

\begin{proof} 
We begin by showing that the class $f_2\in H^2(\Ps_0(r))$ is a pullback of an ample class in $H^2(\Ms_0(r))$. 
Following \cite{MR1231966}, we consider the universal bundle $U$  over $\Ms_1(r)\times C$ normalized such that its first Chern class, when restricted to $\Ms_1(r) \times \{p\}$, is equal to  $r\ftil_2$. 
According to Proposition \ref{prop:bundle}, there is an isomorphism $$\Ps_1(r)\simeq \mathbf{P}(U\big|_{\Ms_1(r)\times\{p\}})\overset{f}{\to}\Ms_1(r),$$ and thus over $\Ps_1(r)$ we have the tautological short exact sequence:
\begin{equation}\label{SES:O(1)}
0\to Q\to f^*U\big|_{\Ms_1(r)\times\{p\}} \to \mathcal{O}(1) \to 0.
 \end{equation}
A line-by-line repetition of the argument in \cite[Lemma 2.1]{MR1231966}, which treats the case $\Ms_0(2)$, yields
$$
\mathcal{O}(1) = (\pi \circ \mathcal{H})^* \mathcal{L}_0,
$$
where $\mathcal{L}_0$ is the ample generator of $\mathrm{Pic}(\Ms_0(r))$ and 
$\mathcal{H}\colon \Ps_1(r) \to \Ps_0(r)$ is the isomorphism induced by the tautological Hecke correspondence (see Proposition \ref{prop:H:10}).

We observe that the cohomology class $-r \xtil$, defined in \eqref{eq:xxtil}, is equal to the first Chern class of $Q\otimes\mathcal{O}(-1)$. Tensoring the short exact sequence \eqref{SES:O(1)} by $\mathcal{O}(-1)$ and  taking determinants, we obtain $$rf^*(\ftil_2)-ry = -r\xtil \Leftrightarrow y =f^*(\ftil_2) + \xtil, \text{\;\; where \;\;} y=c_1(\mathcal{O}(1)) = c_1((\pi\circ\mathcal{H})^*\mathcal{L}_0).$$ Comparing this identity with equation \eqref{eq:Thm:genP0}, we deduce  that $ y = \mathcal{H}^*f_2$, and hence $f_2 = c_1(\pi^*\mathcal{L}_0)$.

Now  let $\mathcal{U}_0$ be a universal bundle over $\Ps_0(r)\times C$, and denote by $\F_1\subset \F_2=\mathcal{U}_{0|p} := \mathcal{U}_0\big|_{\Ps_0(r)\times\{p\}}$ the corresponding universal flag bundle.  
From \eqref{eq:tau2} we have $$2rf_2 = \mathrm{pr}_*c_2(\mathrm{End}(\mathcal{U}_0)),$$ where $\mathrm{pr}:\Ps_0(r)\times C \to \Ps_0(r)$ is the natural projection. Applying \cite[Lemma 2.8]{OTASz}, we obtain
$$f_2 =c_1( \mathrm{det}(\mathcal{U}_{0|p})^{1-g}\otimes \mathrm{det}(\mathrm{pr}_*\mathcal{U}_0)^{-1}).$$
Consequently,  by \eqref{eq:xxtil}, the class $f_2 - \x$ is the first Chern class of the line bundle
\begin{equation}\label{eq:defL1}
\mathrm{det}(\mathcal{U}_{0|p})^{1-g}\otimes \mathrm{det}(\mathrm{pr}_*\mathcal{U}_0)^{-1} \otimes \mathcal{L}, \text{ \: where \:} \mathcal{L}:=\mathrm{det}(\mathcal{U}_{0|p})^{1/r}\otimes \mathcal{F}_1^{-1}.
\end{equation}  As explained in \cite[\S2.2 and Remark 2.7]{OTASz}, this line bundle is ample on $\Ps_0(r)$. 

Since $f_2 = c_1(\pi^*(\mathcal{L}_0))$ with $\mathcal{L}_0$ ample on $\Ms_0(r)$, and $f_2 - \x$ is ample on $\Ps_0(r)$, it follows that $-\x=c_1(\mathcal{L})$ is relatively ample.

It remains to verify that $\mathcal{L}$ restricts to a line bundle of degree $1$ on each fiber $\pi^{-1}(V)$ over stable $V\in \Ms_0(r)$. As observed in Theorem \ref{thm:dtpi}(i), the fiber of the morphism $\pi:\Ps_0(r)\to \Ms_0(r)$ over a point corresponding to a stable vector bundle $V$ is $$\pi^{-1}(V) = \mathbb{P}(V_p) \simeq \mathbb{P}^{r-1},$$ equipped with the tautological line bundle $\mathcal{O}_{\mathbb{P}^{r-1}}(-1)$. The restriction of $\mathcal{F}_1$ to $\pi^{-1}(V)$ coincides with $\mathcal{O}_{\mathbb{P}^{r-1}}(-1)$, while the remaining factor in the definition \eqref{eq:defL1} of $\mathcal{L}$ restrict trivially.  This completes the proof of Proposition \ref{prop:ample}. 
\end{proof}

Recall the notation $i\colon IH^*(\Ms_0(r)) \hookrightarrow H^*(\Ps_0(r))$ for the inclusion obtained by identifying $IH^*(\Ms_0(r))$ with the zeroth perverse piece $P_0H^*(\Ps_0(r))$ (see Proposition \ref{prop:lowestperv}).
Proposition \ref{prop:ample} implies the following statement concerning the cup product of intersection cohomology classes. 
\begin{corollary} 
For any $\alpha,\beta \in IH^*(\Ms_0(r))$, the cup product
$ i(\alpha)\cup i(\beta) $
can be written in $H^*(\Ps_0(r))$ as a linear combination of cohomology
classes that do not involve $\x$. 
\end{corollary}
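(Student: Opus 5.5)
The plan is to combine the perverse description of $P_0$ with the structure of $H^*(\Ps_0(r))$ as a projective bundle cohomology, and to show that the zeroth perverse piece is spanned by classes in the subring generated by $a_k,b_k^j,f_k$.

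\textbf{Step 1: the projective bundle decomposition.} By Proposition \ref{prop:H:10} and Proposition \ref{prop:bundle}, $\Ps_0(r)\simeq \mathbf{P}(\widetilde U_p)\to\Ms_1(r)$. Hence by the Leray--Hirsch theorem every class in $H^*(\Ps_0(r))$ can be written uniquely as $\sum_{l=0}^{r-1} h^*(\gamma_l)\,\x^l$ with $\gamma_l\in H^*(\Ms_1(r))$. By Theorem \ref{Thm:genP0}(i), the subring $R:=h^*H^*(\Ms_1(r))$ is generated by $a_k,b_k^j$ and $h^*\ftil_k=f_k+h^*({\tau_k}_{\check s}(\widetilde U_p))$. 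Since ${\tau_k}_{\check s}$ involves $\xb_1$, this subring is not a priori the one generated by $a_k,b_k^j,f_k$. So the statement to aim for is a statement about the subring $R':=\langle a_k,b_k^j,f_k\rangle$, and the first task is to determine how $P_0$ sits relative to $R'$.

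\textbf{Step 2: locate $P_0$ via $\eta=-\x$.} By Proposition \ref{prop:ample}, $-\x$ is $\pi$-ample of fiber degree one, and $f_2=\pi^*c_1(\mathcal L_0)$ is pulled back from an ample class. Definition \ref{def:pervfiltr} then computes $P_\bullet$ using $A=\cup f_2$. The cleaner route is through Theorem \ref{Thm:HardLef}: on the stable locus, $P_0$ restricts to $\pi^*H^*(M_{[r]})$, and by Proposition \ref{prop:lowestperv}, $i(\alpha)$ is the unique extension in $P_0$.

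\textbf{Step 3: the argument I would commit to.} I would argue that the product $i(\alpha)\cup i(\beta)$ lies in $P_0$, using multiplicativity of the perverse filtration for the pullback-type subalgebra. Then I would show that $P_0\subset R'$ up to terms that pair trivially. To do this, write an arbitrary class as $\sum_l c_l\x^l$ with $c_l\in R'$; this is possible because $R'[\x]=H^*$ by Theorem \ref{Thm:genP0}(ii), and the relative $\mathbb P^{r-1}$-structure lets us reduce powers of $\x$ beyond $r-1$.

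The relative Hard Lefschetz isomorphism $\cup\eta^{l}\colon H_0\to H_{l}$ (Theorem \ref{Thm:HardLef}(iii)) places $c_l\x^l$ in perversity $l$ whenever $c_l$ itself lies in $P_0$. Uniqueness of the perverse decomposition then forces the components with $l\ge 1$ of a $P_0$ class to vanish.

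\textbf{Main obstacle.} The crux is showing that $R'\subset P_0$, that is, that $a_k,b_k^j,f_k$ are perversity-zero. These are Künneth components of $c_k(\overline U)$, where $\overline U$ is the normalized universal bundle pulled back from the universal family over $\Ms_0(r)$ on the stable locus. I would try to prove this by checking the kernel condition of Definition \ref{def:pervfiltr}: for instance, show that $\pi_*$ of such a class times $\x^{l}$ vanishes for $l<r-1$. This can be checked via Proposition \ref{prop:intPtoM} and Lemma \ref{lem:H}, since $h_{l-r+1}=0$ for $l<r-1$. If this check goes through, the Corollary follows.
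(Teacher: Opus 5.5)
\textbf{The crux fails.} Your argument rests on the step you yourself call the crux: that $R'=\langle a_k,b_k^j,f_k\rangle$ lies in $P_0H^*(\Ps_0(r))$. This is false, and the rest of your Step 3 depends on it.

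Two facts make the problem visible. First, $P_0$ is stable under cupping with $f_2=\pi^*c_1(\mathcal{L}_0)$. Second, $P_0H^d\cong IH^d(\Ms_0(r))=0$ for $d>2n$. So $R'\subset P_0$ would force every product of the generators of degree $>2n$ to vanish.

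This already fails for $r=g=2$. There $\Ms_0(2)\cong\mathbb{P}^3$, so $P_0=\pi^*H^*(\mathbb{P}^3)$ and $P_0H^4=\mathbb{Q}f_2^2$.
\begin{itemize}
\item By the proof of Proposition \ref{prop:ample}, $f_2=h^*\ftil_2+\x$.
\item The Chern-root relation on the $\mathbb{P}^1$-bundle $h$ gives $\x^2=-a_2$.
\item Hence, in the Leray--Hirsch decomposition, $f_2^2=h^*(\ftil_2^2)-a_2+2h^*(\ftil_2)\,\x$ has a nonzero $\x$-component, while $a_2=h^*\atil_2$ has none.
\item So $a_2\notin P_0$. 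Equivalently, $\int_{\Ps_0(2)}a_2f_2^2=\pm2\int_{\Ms_1(2)}\atil_2\ftil_2\neq 0$ in degree $8>2n=6$. This is a nonzero intersection number on $\Ms_1(2)$, computable from Theorem \ref{ThmFKtoHam}; in particular $\atil_2\neq0$.
\end{itemize}

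\textbf{Why your heuristic and test miss this.} The fact that $\overline{U}$ comes from $M_{[r]}$ over the stable locus only controls restrictions to $\pi^{-1}(M_{[r]})$. Such restrictions cannot detect perversity, because classes supported over the strictly semistable locus can have positive perversity. Restriction does not even determine $i(\alpha)$: the map $IH^*(\Ms_0(r))\to H^*(M_{[r]})$ is not injective, e.g.\ in degree $2n$.

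Your proposed test is also the wrong one. Proposition \ref{prop:intPtoM} and Lemma \ref{lem:H} concern pushforward along $f\colon\Ps_1(r)\to\Ms_1(r)$, not along $\pi$. Every class $h^*\gamma$, including $a_2$, passes that test.

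\textbf{Further points.} The claim that $i(\alpha)\cup i(\beta)\in P_0$ ``by multiplicativity'' is unsupported. No inclusion $P_i\cup P_j\subset P_{i+j}$ is available for $\pi$, and $IH^*(\Ms_0(r))$ has no ring structure. It is also unnecessary: $R'$ is a subring, so the corollary follows once you have the \emph{reverse} inclusion $P_0\subset R'$, i.e.\ that each $i(\alpha)$ separately admits an $\x$-free expression. A smaller slip: in the paper's normalization, $\cup\eta^l$ raises perversity by $2l$, not $l$.

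\textbf{What the paper does instead.} It works on $P_0$ directly rather than trying to place $R'$ inside it. By Proposition \ref{prop:ample}, $-\x$ is $\pi$-ample. By relative Hard Lefschetz (Theorem \ref{Thm:HardLef}), cupping with $\x$ strictly raises perversity, so no class of $P_0$ can be divisible by $\x$. That statement about $P_0$ is the ingredient your plan is missing. Proving that the generators $a_k,b_k^j,f_k$ have perversity zero is not a viable substitute, since it is false.
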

\begin{proof}
We observe that, by Proposition \ref{prop:ample}, no cohomology class in 
$P_0H^*(\Ps_0(r))$ can be divisible by $\x$. Indeed, multiplication by the 
relatively ample class $\x$ strictly increases perversity by the relative Hard Lefschetz Theorem \ref{Thm:HardLef}.
\end{proof}

\subsection{Our main result}
Now we can describe the intersection pairing in the intersection cohomology $IH^*(\Ms_0(r))$.

\begin{theorem}\label{Thm:mainresult}
Let $a_k, b_k^j, f_k \in H^*(\Ps_0(r))$ be cohomology classes defined in \S\ref{S:genP0}, and let  $A\colon H^*(\Ps_0(r))\to H^{*+2}(\Ps_0(r))$  denote the operator given by cupping with the class $f_2 $.
Set $n = \dim(\Ms_0(r))$.
\begin{enumerate}[label=(\roman*)]
\item For each $0 \leq d \leq 2n$, the degree-$d$ intersection cohomology of $\Ms_0(r)$ admits the following description:
\begin{equation*}
IH^d(\Ms_0(r)) = \bigoplus_{k\geq 1} \left( \mathrm{ker}(A^{n-d+k})\cap \mathrm{im}(A^{k-1})\right) \cap H^d(\Ps_0(r)).
\end{equation*}

\item   Let $i\colon IH^*(\Ms_0(r)) \hookrightarrow H^*(\Ps_0(r))$ denote the natural inclusion. 
Let $\alpha, \beta \in IH^*(\Ms_0(r))$ be classes satisfying $\mathrm{deg}\alpha+\mathrm{deg}\beta = 2\mathrm{dim}(\Ms_0(r))$, so that 
 $$ i(\alpha) \cup i(\beta)= \prod_{k=2}^r\left( a_k^{m_k} f_k^{n_k}\prod_{j_k=1}^{2g} (b_k^{j_k})^{l_{k,j_k}} \right).$$
Then the intersection Poincar\'e-Verdier pairing of $\alpha$ and $\beta$ in $IH^*(\Ms_0(r))$ is given by 
\begin{multline}\label{eq:final}
N \cdot\!
\mathrm{Coeff}_{\delta^{\mathbf{n}}}\Biggl[ \!\sum_{\bb\in \HH_n} \!
\iber_{\mathbf{B},Q}\! \Biggl[\!\frac{\!\big(\frac{1}{r}\sum_{j=1}^r\!(x_j-x_r)\big)^{r-1}  \prod_{k=2}^r\tau_k(x)^{m_k}}{\prod_{i=1}^{r-1}\!(x_r-x_i)\prod_{i<j}(x_i-x_j)^{2g-2}\, \mathrm{det}(\mathrm{Hess}_\bb(Q))}  \\
\int_{T^{2g}}\exp\left(-\sum_{a,b}\sum_{j=1}^g\! \zeta^j_a\zeta^{j+g}_b Q_{\check{u}_a\,\check{u}_b} \right) 
\prod_{k=2}^r\prod_{j_k=1}^{2g}\!\left(\!\sum_a \zeta_a^{j_k}\tau_{k_{\check{u}_a}}\!\right)^{l_{k,j_k}}\!\Biggr](-[c]_\bb)\!\Biggr], 
\end{multline}
where $$N=(-1)^{{r \choose 2}(g-1)} \frac{\prod_{k=2}^rn_k!}{(r-1)!} \text{\; and \;} c = (1/r,...,1/r,1/r-1).$$ 

As above, $Q(x)=\sum_{k=2}^r \delta_k \tau_k(x)$ is a polynomial on $V \otimes_{\mathbb{R}} \mathbb{C}$, where $\tau_k$ denotes the $k$-th elementary symmetric polynomial, $\delta_k$ is a formal nilpotent
parameter, and $\delta_2 \neq 0$.
We expand $\exp(Q_{\check{u}_a\,\check{u}_b})$ as a formal power series in the variables $\delta_k$, and we denote by $\mathrm{Coeff}_{\delta^{\mathbf{n}}}$ the coefficient of
$\delta^{\mathbf{n}} := \prod_{k=2}^r \delta_k^{n_k}.$
\end{enumerate}
\end{theorem}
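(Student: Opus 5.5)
Part (i) and part (ii) are proved separately; both reduce the statement to results already in hand.

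\textbf{Part (i).} I will combine Proposition \ref{prop:lowestperv} with Definition \ref{def:pervfiltr}. The input needed is an ample class on $\Ms_0(r)$, and Proposition \ref{prop:ample} provides one: $f_2=c_1(\pi^*\mathcal{L}_0)$ with $\mathcal{L}_0$ ample. Hence $A$ is exactly the operator in Definition \ref{def:pervfiltr} for $\pi$.

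Set $N=\dim\Ps_0(r)=n+r-1$ and $k=0$. The definition then gives
\[
P_0H^d=\sum_{i\ge1}\bigl(\ker A^{N+i-d}\cap \mathrm{im}A^{i-1}\bigr)\cap H^d ,
\]
with the exponent $N+i-d$. This must be compared with the exponent $n-d+k$ in the statement. The footnote to the definition shifts the perverse range to $[0,2r-2]$, so the normalization has to be matched carefully. Concretely, I will check the shift using the lowest-perversity piece for the stable fibre $\mathbb{P}^{r-1}$, and adjust the indexing accordingly.

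Since $P_{-1}=0$, we have $P_0=H_0$. Proposition \ref{prop:lowestperv} then identifies $IH^d$ with this subspace. The sum is in fact direct: the pieces $\ker A^{\cdot}\cap\mathrm{im}A^{k-1}$ come from the Lefschetz-type decomposition for the nilpotent operator $A$ (de Cataldo–Migliorini). This yields (i).

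\textbf{Part (ii).} By Proposition \ref{prop:ample}, $\mathcal{L}$ is $\pi$-ample with fibre degree $1$ and $c_1(\mathcal{L})=-\x$. Proposition \ref{prop:PDtoPV} therefore gives
\[
\langle\alpha,\beta\rangle=\int_{\Ps_0(r)} i(\alpha)\,i(\beta)\,(-\x)^{r-1}.
\]
I then substitute the monomial expression for $i(\alpha)\cup i(\beta)$; the Corollary guarantees it contains no $\x$.

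The $f_k$-powers are extracted from the generating function: $\prod_k f_k^{n_k}$ equals $\prod_k n_k!$ times the coefficient of $\delta^{\mathbf n}$ in $\exp(\sum_k\delta_kf_k)$. Theorem \ref{Thm:intP0} with $m=r-1$ then produces \eqref{eq:final}. The sign $(-1)^{r-1}$ coming from $(-\x)^{r-1}$ is absorbed by rewriting
\[
\Bigl(\tfrac1r\sum_j(x_r-x_j)\Bigr)^{r-1}\quad\text{as}\quad \Bigl(\tfrac1r\sum_j(x_j-x_r)\Bigr)^{r-1},
\]
which yields the prefactor $N$.

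\textbf{Main obstacle.} The delicate step is the index bookkeeping in (i): reconciling the shift conventions of Definition \ref{def:pervfiltr} with the stated exponents, and justifying that the sum is direct. The second point I will settle via the $\mathfrak{sl}_2$-decomposition furnished by relative Hard Lefschetz. A smaller point in (ii) is to verify that the coefficient extraction commutes with the formal iBer operator, which holds since both are linear in the nilpotent parameters.
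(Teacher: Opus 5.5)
Your proposal follows the paper's proof: (i) is read off from Definition~\ref{def:pervfiltr} via $f_2=c_1(\pi^*\mathcal{L}_0)$ (Proposition~\ref{prop:ample}) together with Proposition~\ref{prop:lowestperv}, and (ii) is Proposition~\ref{prop:PDtoPV} with $\eta=-\x$ followed by Theorem~\ref{Thm:intP0} at $m=r-1$, with the sign $(-1)^{r-1}$ and the factor $\prod_k n_k!$ handled exactly as you describe. Your proposed normalization check on the $\mathbb{P}^{r-1}$-fibre will show that, for the $[0,2r-2]$ convention, the exponent must be $\dim\Ms_0(r)+i-d$ rather than $\dim\Ps_0(r)+i-d$, which is exactly the statement (the definition as printed gives the $[-(r-1),r-1]$ convention, a point the paper passes over); also, no commutation of $\mathrm{Coeff}_{\delta^{\mathbf n}}$ with $\iber$ is needed, since the coefficient is taken on both sides of the identity in Theorem~\ref{Thm:intP0}.
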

\begin{proof}
Since $f_2\in H^*(\Ps_0(r))$ is the pullback of an ample class on $\Ms_0(r)$ (see Proposition \ref{prop:ample}), part (i) follows immediately from Definition \ref{def:pervfiltr}.

For part (ii), putting together Propositions \ref{prop:PDtoPV} and \ref{prop:ample}, we see that the intersection pairing of the classes $\alpha$ and $\beta$ in $IH^*(\Ms_0(r))$ is given by
\begin{equation*}
\int_{\Ps_0(r)}(-\x)^{r-1}\prod_{k=2}^r\left( a_k^{m_k} f_k^{n_k}\prod_{j_k=1}^{2g} (b_k^{j_k})^{l_{k,j_k}}\right), 
\end{equation*}
where $\x\in H^2(\Ps_0(r))$ is the class defined it \eqref{eq:xxtil}.
Applying Theorem \ref{Thm:intP0} to evaluate the integral over the moduli space $\Ps_0(r)$, we arrive at the expression \eqref{eq:final}.
\end{proof}

\begin{remark}
Remark \ref{rem:toJKKW1} allows us to compare our formula \eqref{eq:final} for the intersection pairing with the corresponding expression of Jeffrey--Kirwan--Kiem--Woolf \cite{JKKW06}. Under the identification described there, the factor
$(\frac{1}{r}\sum_{j=1}^r (x_j - x_r))^{r-1},$ which arises from the power of the relatively ample class, is replaced by
$\frac{1}{r}\prod_{j=1}^{r-1}(x_j - x_r),$ representing the top Chern class of the virtual relative tangent bundle. With this substitution, our formula recovers the form of the expression in \cite[Theorem 33]{JKKW06}.
\end{remark}

\subsection{Relation to Kiem's work}\label{S:Kiem}
We conclude the paper by comparing our formulas with those of Kiem \cite{Kiem}, who computed the intersection Poincar\'e pairing on the moduli space $\Ms_0(2)$ of rank-two semistable bundles.
As a first step, we write down the rank-two case of Theorem \ref{Thm:mainresult} in a form suitable for comparison.

 \begin{proposition}\label{prop:Rank2we}
 Let $a_2, f_2, \x, b_2^j \in H^*(\Ps_0(2))$, for $j=1,2,...,2g$, be the cohomology classes  defined in  \S\ref{S:genP0} for $r=2$, and set $\gamma = \sum_{j=1}^g\! b_2^jb_2^{j+g} \in H^6(\Ps_0(2))$.  Let $\alpha_1, \alpha_2 \in IH^*(\Ms_0(2))$ be classes so that 
  $$ i(\alpha_1) \cup i(\alpha_2)= a_2^{m} f_2^{n}\gamma^p \text{\;\; with \;\;} 2m+n+3p = 3g-3.$$
Then the  Poincar\'e--Verdier pairing of $\alpha$ and $\beta$ in $IH^*(\Ms_0(2))$ is given by 
\begin{equation*}
\frac{(-1)^{g+m}}{2^{1+2m+p-g}}\cdot\frac{n!g!}{(g-p)!}\cdot \underset{y=0}{\res} \frac{y^{2+2m+2p-2g}}{1-e^{-y}}\,dy.
\end{equation*}
\end{proposition}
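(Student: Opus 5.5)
We specialize Theorem \ref{Thm:mainresult}(ii) to $r=2$ and evaluate every ingredient explicitly. For $r=2$ we have $V^*=\{(a,-a)\}$, one root $\alpha^{12}=x_1-x_2$, and each Hamiltonian set $\HH_n$ has exactly one basis. We take $n=1$, so $\bb=(\alpha^{12})$, and write $y=x_1-x_2$. By \eqref{eq:tau2}, $\tau_2(x)=-y^2/4$ and $Q=\delta_2\tau_2=-\delta_2y^2/4$. The factor $\frac{1}{r}\sum_j(x_j-x_r)$ becomes $y/2$. The denominator $\prod_{i=1}^{r-1}(x_r-x_i)$ becomes $-y$, and $\prod_{i<j}(x_i-x_j)^{2g-2}=y^{2g-2}$. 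Also $c=(1/2,-1/2)=\tfrac12\alpha^{12}$, so $\{c\}_\bb=c$ and $[c]_\bb=0$. The evaluation point is therefore $0$ and no exponential shift $\exp(Q_{\check a})$ appears.

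\textbf{Step 1: rewrite the cup product.} Since $b_2^jb_2^{j+g}$ are odd classes, $\gamma^p$ expands as $p!\sum_{|S|=p}\prod_{j\in S}b_2^jb_2^{j+g}$. So $\gamma^p$ is a sum of $\binom{g}{p}$ monomials, each with coefficient $p!$. This is the source of the factor $g!/(g-p)!$.

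\textbf{Step 2: compute the ingredients in the coordinate $y$.} With $\check u$ the orthonormal dual basis and $u_1=\alpha^{12}/\sqrt2$, the second derivative $Q_{\check u\check u}$ is a constant multiple of $\delta_2$. The class $\tau_{2\check u}$ is a constant multiple of $y$. Likewise $\det\mathrm{Hess}_\bb(Q)$ and $dQ_{\check\beta}$ are constant multiples of $\delta_2$ and of $\delta_2\,dy$ respectively.

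\textbf{Step 3: the torus integral.} For each $j\in S$ the torus integrand contributes $(\zeta^j\zeta^{j+g})(\tau_{2\check u})^2$. The remaining $g-p$ pairs are absorbed by expanding $\exp(-\sum_j\zeta^j\zeta^{j+g}Q_{\check u\check u})$. By \eqref{eq:intTr}, each such monomial integrates to $2^{g}$ times $(-Q_{\check u\check u})^{g-p}\,y^{2p}$, up to sign and normalization constants.

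\textbf{Step 4: collect and take the residue.} Collecting everything, we apply Remark \ref{rem:iBer1}. The $\iber$ becomes
\[\res_{y=0}\frac{\text{(const)}\,y^{1+2m+2p-(2g-2)-1}\,\delta_2^{\,g-p-1}\,d(\delta_2 y)}{1-\exp(\kappa\delta_2 y)}\]
for an explicit constant $\kappa$. The $\delta_2$-powers collapse once we rescale $y\mapsto y/(\kappa\delta_2)$: the residue of a homogeneous form is scale invariant except through $\delta_2^{\text{total degree}}$. The exponent of $\delta_2$ is then forced to equal $n$, consistent with the dimension constraint $2m+n+3p=3g-3$. Extracting $\mathrm{Coeff}_{\delta_2^n}$ and multiplying by $n!$ from $N$ gives the $n!$ factor. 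What remains is $\res_{y=0}\,y^{2+2m+2p-2g}/(1-e^{-y})\,dy$, where the sign of the exponent is chosen by the substitution $y\to -y$.

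\textbf{Main obstacle.} The hardest part is keeping track of the overall sign $(-1)^{g+m}$ and the power $2^{-(1+2m+p-g)}$. These receive contributions from many places: $(-1)^{\binom22(g-1)}$, the $(-y)$ denominator, $\tau_2^m=(-y^2/4)^m$, $(y/2)^{r-1}$, the $\sqrt2$ normalizations in $\check u$, $r^g=2^g$ from \eqref{eq:intTr}, the $2^{g-p}$ and $4^{-p}$ from the torus factors, and the orientation flip $y\mapsto -y$. I would handle this by first verifying the bookkeeping on a smallest case, e.g. $g=2$, $p=0$, against the known volume, and then doing the general count.
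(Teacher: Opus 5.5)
Your route is the paper's: specialize Theorem~\ref{Thm:mainresult} to $r=2$ with $\bb=(\alpha^{12})$ and $[c]_{\bb}=0$, turn $\iber$ into a single residue via Remark~\ref{rem:iBer1}, evaluate the torus integral, and rescale $y$ to read off the $\delta^n$-coefficient. Your expansion of $\gamma^p$ into $\binom{g}{p}$ monomials with coefficient $p!$ is an explicit form of the paper's identity $\int_{T^{2g}}e^{\delta\xi}\xi^p=2^g\delta^{g-p}g!/(g-p)!$, where $\xi=\sum_j\zeta^j\zeta^{j+g}$.

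The gap is that you stop where the content of the proposition lies. Every factor is left ``up to sign and normalization constants''. A check at $g=2$, $p=0$ cannot by itself fix a sign and a power of $2$ that depend on $g,m,p$. Two of your indications also need correcting:
\begin{enumerate}
\item The exponential in the denominator is not a free choice. In the paper's normalization $Q_{\check\alpha^{12}}=-\delta y$, so $\det\mathrm{Hess}_{\bb}(Q)=-\delta$ and $dQ_{\check\alpha^{12}}/\det\mathrm{Hess}_{\bb}(Q)=dy$. The denominator is therefore already $1-e^{-\delta y}$. Flipping $y\mapsto -y$ would cost a sign: for $k=2+2m+2p-2g\neq -1$ one has $\res_{y=0}\frac{y^k\,dy}{1-e^{y}}=-\res_{y=0}\frac{y^k\,dy}{1-e^{-y}}$.
\item With the orthonormal coordinate $u=y/\sqrt2$ there are no separate factors $2^{g-p}$ or $4^{-p}$. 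One has $Q_{\check u\check u}=-\delta$ and $(\tau_{2\check u})^2=y^2/2$. So the torus factor is exactly $2^g\delta^{g-p}\frac{g!}{(g-p)!}(y^2/2)^p$.
\end{enumerate}

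With these values the count is immediate:
\begin{itemize}
\item \textbf{Sign:} $(-1)^{g-1}$ from $N=(-1)^{g-1}n!$, $(-1)^m$ from $\tau_2^m=(-y^2/4)^m$, and $-1$ from $x_2-x_1=-y$. This gives $(-1)^{g+m}$.
\item \textbf{Power of $2$:} $2^{-1}$ from $y/2$, $2^{-2m}$ from $\tau_2^m$, $2^{g}$ from \eqref{eq:intTr}, and $2^{-p}$ from $(y^2/2)^p$. This gives $2^{-(1+2m+p-g)}$.
\item \textbf{Coefficient of $\delta^n$:} the substitution $w=\delta y$ turns $\delta^{g-p}\res_{y=0}\frac{y^k\,dy}{1-e^{-\delta y}}$ into $\delta^{g-p-k-1}\res_{w=0}\frac{w^k\,dw}{1-e^{-w}}$. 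Here $g-p-k-1=3g-3-2m-3p=n$, so taking $\mathrm{Coeff}_{\delta^n}$ yields the stated formula, with $n!$ coming from $N$ and $g!/(g-p)!$ from the torus integral.
\end{itemize}
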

\begin{proof}
We begin by applying Theorem \ref{Thm:mainresult} in the case $r=2$.   It follows that the intersection pairing of $\alpha_1$ and $\alpha_2$ is given by
\begin{multline*}
N \! \cdot
\mathrm{Coeff}_{\delta^{n}}\Biggl[\!
\iber_{\mathbf{B},Q}\! \Biggl[ \!\frac{\big((x_1-x_2)/2\big)  \tau_2(x)^{m}}{-(x_1-x_2)^{2g-1}\, \mathrm{det}(\mathrm{Hess}_\bb(Q))}  \\
\int_{T^{2g}}\!\exp\left(-\sum_{j=1}^g\! \zeta^j\zeta^{j+g}Q_{\check{u}\,\check{u}} \right) 
\!\left(\!\sum_{j=1}^g\!\zeta^{j}\zeta^{j+g}(\tau_{2_{\check{u}}})^2\!\right)^{p}\Biggr](-[c]_\bb)\!\Biggr],
\end{multline*}
where $N=(-1)^{g-1}(n)!$, $\bb = (\alpha^{12})$ (see \S\ref{S:NotationBases}), $Q(x) = -{\delta}(x_1-x_2)^2/4$ (see \eqref{eq:tau2}) $u= \frac{1}{\sqrt{2}}(x_1-x_2)$, and $\zeta^j\in H^1(T^{2g})$, for $j=1,...,2g$, are cohomology classes defined in \S\ref{S:cohTorus}. 
Using Remark \ref{rem:iBer1}, this expression can be rewritten as
\begin{equation}\label{eq:rk2-1}
 N  \cdot
\mathrm{Coeff}_{\delta^{n}}
\underset{y=0}{\res}\frac{\big(y/2\big)  (-y^2/4)^{m}}{-y^{2g-1}(1-e^{-\delta y})}
\!\int_{T^{2g}}\!\exp\left(\!\delta \sum_{j=1}^g\! \zeta^j\zeta^{j+g}\!\right) 
\!\left(\!\sum_{j=1}^g\frac{\zeta^j\zeta^{j+g}y^2}{2}\!\right)^{p}\!dy.
\end{equation}
Now set  $\xi = \sum_{j=1}^g\zeta^j\zeta^{j+g} \in H^2(T^{2g})$.  It is a simple exercise  to check (cf. \eqref{eq:intTr}) that
\begin{equation*}
\int_{T^{2g}}\!\exp(\delta \xi)\xi^p = \delta^{g-p}2^g\frac{g!}{(g-p)!}.
\end{equation*}
Substituting this into \eqref{eq:rk2-1}, we obtain
\begin{multline}\label{eq:rk2-2}
N  \cdot \mathrm{Coeff}_{\delta^{i+n}}\! \frac{(-1)^{m+1}g!}{(g-p)!}\underset{y=0}{\res}\frac{y^{2+2m+2p-2g}\delta^{g-p}}{2^{1+2m+p-g}(1-e^{-\delta y})} dy  = \\
N  \cdot \frac{(-1)^{m+1}g!}{(g-p)!}\underset{y=0}{\res}\frac{y^{2+2m+2p-2g}}{2^{1+2m+p-g}(1-e^{-y})} dy,
\end{multline}
where we used the identity $n -(g-p) = -1-(2-2g +2m+2p)$. 
\end{proof}

Recall from \S\ref{S:genP0} that we defined the cohomology classes $a_2, f_2, b_2^j\in H^*(\Ps_0(2))$ as the K\"unneth components of the second Chern class of the vector bundle $\overline{U}=\mathcal{U}\otimes \mathrm{det}(\mathcal{U})^{-1/2}$, where $\mathcal{U}\to \Ps_0(2)\times C$ is a universal bundle. Explicitly, we have
$$c_2(\overline{U}) = a_2\otimes 1 + \sum_{j=1}^{2g}b_2^j\otimes e_j+ f_2\otimes \omega\in H^*(\Ps_0(2))\otimes H^*(C),$$ where $\omega\in H^{2}(C)$ is the fundamental class of the curve and $\{e_{j}\}$ is a symplectic basis of $H^{1}(C)$. 
The key point underlying this definition is that the vector bundle $\overline{U}$, and hence the classes $a_2, f_2, b_2^j\in H^*(\Ps_0(2))$, are independent of the choice of the universal bundle $\mathcal{U}$. 

Motivated by Kiem's construction (see \cite[\S5]{Kiem}), we may alternatively consider the K\"unneth components of the bundle $\mathrm{End}(\mathcal{U})$, which is also independent on the choice of the universal bundle $\mathcal{U}$. In this case, we define $\alpha, \beta, \psi_j\in H^*(\Ps_0(2))$ as
$$c_2(\mathrm{End}(\mathcal{U})) = -\beta\otimes 1 + 4\sum_{j=1}^{2g}\psi_j\otimes e_j+ 2\alpha\otimes \omega\in H^*(\Ps_0(2))\otimes H^*(C).$$
Then we obtain the following statement, which agrees with Kiem's result; see \cite[Corollary 5.4]{Kiem}.
\begin{proposition}\label{prop:wetoKiem}
Let $\alpha,\beta\in H^*(\Ps_0(2))$ be the cohomology classes defined above. 
\begin{enumerate}[label=(\roman*)]
\item The intersection pairing of classes $\alpha_1, \alpha_2\in IH^*(\Ms_0(2))$ satisfying
$$i(\alpha_1)\cup i(\alpha_2)=  \beta^{m} \alpha^{n} \text{\;\; with \;\;} 2m+n = 3g-3 \text{\;\; and \;\;} m<g-1$$ is equal to
$(-1)^{g} n!\, 2^{2g-2} \kappa_{g-1-m}$, where $\kappa_{g-1-m}$ is defined by $\frac{t}{\tanh(t)}=\sum_{j\geq 0}\kappa_jt^{2j}.$ 
\item Moreover, the intersection-homology fundamental class of $\Ms_0(2)$ is represented by  $$\frac{\alpha^{g-2}\beta^{g-2}\psi}{(g-2)!(-4)^{g-1}}, \text{\; where \;}  \psi=\alpha\beta-4\sum_{j=1}^g{\psi_j\psi_{j+g}}.$$
\end{enumerate}
\end{proposition}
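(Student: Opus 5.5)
The plan is to reduce both statements to Proposition \ref{prop:Rank2we} by expressing Kiem's classes $\alpha,\beta,\psi_j$ through the classes $a_2,f_2,b_2^j$ of \S\ref{S:genP0}.

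\textbf{Change of generators.} Write $\mathrm{End}(\mathcal{U})=\overline{U}\otimes\overline{U}^*$ and use $c_1(\overline{U})=0$. For a rank-$2$ bundle with vanishing first Chern class, the Chern character gives
\[
c_2(\overline{U}\otimes\overline{U}^*)=4c_2(\overline{U})-c_1(\overline{U})^2=4c_2(\overline{U}).
\]
Comparing Künneth components then yields $\beta=-4a_2$, $\alpha=2f_2$ and $\psi_j=b_2^j$. Consequently $\psi=\alpha\beta-4\sum_{j\le g}\psi_j\psi_{j+g}=-8a_2f_2-4\gamma$, where $\gamma=\sum_{j=1}^g b_2^jb_2^{j+g}$ as in Proposition \ref{prop:Rank2we}.

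\textbf{Part (i).} Here $\beta^m\alpha^n=(-4)^m2^n\,a_2^mf_2^n$, so Proposition \ref{prop:Rank2we} applies with $p=0$ and gives the pairing
\[
(-4)^m2^n\cdot\frac{(-1)^{g+m}n!}{2^{1+2m-g}}\,\underset{y=0}{\res}\frac{y^{2+2m-2g}}{1-e^{-y}}\,dy=(-1)^g2^{n+g-1}n!\,\underset{y=0}{\res}\frac{y^{2+2m-2g}}{1-e^{-y}}\,dy .
\]
The residue is the coefficient of $y^{2g-2-2m}$ in $\frac{y}{1-e^{-y}}=\frac y2+\frac y2\coth\frac y2$. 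Since $m<g-1$, the exponent $2g-2-2m$ is even and positive, so only the even part contributes. That even part is $\frac{y/2}{\tanh(y/2)}=\sum_j\kappa_j(y/2)^{2j}$, so the coefficient is $\kappa_{g-1-m}2^{-(2g-2-2m)}$. Using $n=3g-3-2m$, the total power of $2$ is $n+g-1-(2g-2-2m)=2g-2$. This yields $(-1)^gn!\,2^{2g-2}\kappa_{g-1-m}$, as claimed.

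\textbf{Part (ii).} Let $X=\alpha^{g-2}\beta^{g-2}\psi$, which has degree $6g-6=2\dim\Ms_0(2)$. Two things must be checked.

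First, $X$ must lie in $P_0H^{2n}(\Ps_0(2))=\ker(A)\cap H^{2n}$; by Theorem \ref{Thm:mainresult}(i) with $d=2n$, only $k=1$ survives. Since $\dim_\C\Ps_0(2)=n+1$, the class $f_2X$ has top degree. So $X\in P_0$ reduces to the single number $\int_{\Ps_0(2)}f_2X=0$. I would compute this number with Theorem \ref{Thm:intP0} at $m=0$ (no power of $\x$). After substituting $X=(-8)(2^{g-2}(-4)^{g-2})\,a_2^{g-1}f_2^{g-1}-4(2^{g-2}(-4)^{g-2})\,a_2^{g-2}f_2^{g-2}\gamma$, the two terms should cancel in rank $2$.

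Second, $X$ must pair to the correct normalization with $1\in IH^0$. By Proposition \ref{prop:PDtoPV} this pairing is $\int_{\Ps_0(2)}(-\x)X$, which Proposition \ref{prop:Rank2we} evaluates term by term. The term in $a_2^{g-1}f_2^{g-1}$ is the case $m=g-1$, $p=0$, with residue $\res\, y^{0}/(1-e^{-y})\,dy=1$. The term in $a_2^{g-2}f_2^{g-2}\gamma$ is the case $m=g-2$, $p=1$, with residue of $y^{0}/(1-e^{-y})$ again $1$. Combining these with the prefactors should give $(g-2)!(-4)^{g-1}$, which explains the normalization. Together with the one-dimensionality of $IH^{2n}$, this identifies $X$, suitably normalized, with the fundamental class.

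\textbf{Main obstacle.} The expected difficulty is the vanishing $\int f_2X=0$. It requires the full rank-$2$ version of Theorem \ref{Thm:intP0} with $m=0$, where the factor $1/(x_2-x_1)$ replaces the $\x$-power, and the bookkeeping of the $T^{2g}$ integral for $p=1$. If that computation becomes unwieldy, I would instead invoke Remark \ref{rem:toJKKW1}, or simply interpret ``represented by'' as modulo $P_{\geq1}$. In that reading only the pairing computation above is needed, because the pairing with $\eta^{r-1}$ kills the higher perverse pieces.
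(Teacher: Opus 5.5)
Your route is the paper's: the same dictionary $\beta=-4a_2$, $\alpha=2f_2$, $\psi_j=b_2^j$, and part (i) from Proposition \ref{prop:Rank2we} with $p=0$. Your even/odd splitting of $y/(1-e^{-y})$ is equivalent to the paper's symmetrization, and your powers of $2$ are right. Part (ii) likewise goes by showing $X=\alpha^{g-2}\beta^{g-2}\psi$ lies in $\mathrm{im}(i)$ and then evaluating $\int_{\Ps_0(2)}(-\x)X$ term by term. That evaluation does close. The two values from Proposition \ref{prop:Rank2we} are $-(g-1)!/2^{g-1}$ and $g(g-2)!/2^{g-2}$, and
$2^{g-2}(-4)^{g-2}\bigl[8(g-1)!/2^{g-1}-4g(g-2)!/2^{g-2}\bigr]=(-4)^{g-1}(g-2)!$.

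Two points in (ii) need fixing.

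\textbf{Reading of Theorem \ref{Thm:mainresult}(i).} Your reading at $d=2n$ is wrong. There the $k$-th summand is $\ker(A^{k-n})\cap\mathrm{im}(A^{k-1})$, so the terms with $k\le n$ vanish; the one that survives is $k=n+1$, not $k=1$. This gives $P_0H^{2n}(\Ps_0(2))=\ker A\cap\mathrm{im}A^{n}=\mathbb{Q}f_2^{\,n}$, since $f_2^{\,n+1}=0$.

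Your criterion ``$X\in P_0$ iff $\int f_2X=0$'' is still correct, but for a reason you don't state. $H^{6g-6}(\Ps_0(2))$ is two-dimensional, because $\Ps_0(2)$ is a $\mathbb{P}^1$-bundle over $\Ms_1(2)$ and $b_2(\Ms_1(2))=1$. On this space $\int f_2\cup(\cdot)$ is a nonzero functional vanishing on $f_2^{\,n}$.

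The cancellation you anticipate does occur. Theorem \ref{Thm:intP0} without $\x$ gives $\int a_2^{g-2}f_2^{g-1}\gamma=-2\int a_2^{g-1}f_2^{g}$. In both cases $m+p=g-1$, and only the linear term of $t/(1-e^{-t})$ contributes. Hence $-8\int a_2^{g-1}f_2^{g}-4\int a_2^{g-2}f_2^{g-1}\gamma=0$.

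\textbf{The fallback.} Drop it. $P_\bullet$ is an increasing filtration and $P_0$ is a subspace, so ``modulo $P_{\geq 1}$'' only makes sense after choosing a complement. If you take that complement to be $\ker\int(\cdot)\cup(-\x)$, the normalization statement becomes a tautology. Remark \ref{rem:toJKKW1} says nothing about membership either. The content of (ii) is exactly that $i^{-1}(X)$ exists, so the check $\int f_2X=0$ cannot be bypassed.
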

\begin{proof}
Using \eqref{eq:tau2}, we observe that  $c_2(\overline{U}) = \frac{1}{4}c_2(\mathrm{End}(\mathcal{U}))$, and therefore the corresponding K\"uneth components satisfy
$$\beta = -4 a_2, \;\; \alpha = 2f_2, \;\; \psi_j=b_2^j,\; j=1,2,...,2g.$$
Applying Proposition \ref{prop:Rank2we}, we obtain that the intersection pairing of classes $\alpha_1, \alpha_2\in IH^*(\Ms_0(2))$ is given by
\begin{multline}\label{eq:intweKiem}
\frac{(-1)^{g}\, n!\, 2^{2g-2}}{2^{2+2m-2g}}\cdot \underset{y=0}{\res}\frac{y^{2+2m-2g}}{1-e^{-y}}\,dy = \\
\frac{(-1)^{g}\, n!\, 2^{2g-2}}{2^{2+2m-2g}}\cdot \frac{1}{2}\underset{y=0}{\res}\,y^{2+2m-2g}\left(\frac{1}{1-e^{-y}}-\frac{1}{1-e^y}\right)dy = \\
\frac{(-1)^{g}\, n!\, 2^{2g-2}}{2^{2+2m-2g}}\cdot \frac{1}{2}\underset{y=0}{\res}\,y^{2+2m-2g}
\frac{e^{y/2}+e^{-y/2}}{e^{y/2}-e^{-y/2}}dy =\\
(-1)^{g}\, n!\, 2^{2g-2}\underset{y=0}{\res}\,y^{2+2m-2g}
\frac{e^{y}+e^{-y}}{e^{y}-e^{-y}}dy = (-1)^{g}\, n!\, 2^{2g-2}\kappa_{g-1-m}.
\end{multline} 
This verifies (i). To prove (ii), we first observe by a straightforward calculation that the class  $$\alpha^{g-2}\beta^{g-2}\psi\in H^{3g-3}(\Ps_0(2))$$ lies in the image of the natural inclusion $i\colon IH^*(\Ms_0(2))\hookrightarrow H^*(\Ps_0(2))$, defined in \ref{Thm:mainresult}. 
Applying Proposition \ref{prop:Rank2we}, we then compute the intersection pairing $$\langle i^{-1}(\alpha^{g-2}\beta^{g-2}\,\psi), 1\rangle_{IH(\Ms_0(2))} = (-4)^{g-1}(g-2)!;$$ this shows (ii).
\end{proof}

\bibliographystyle{unsrt}
\bibliography{IntPair.bib}

\end{document}